\providecommand{\U}[1]{\protect\rule{.1in}{.1in}}
\newtheorem{theorem}{Theorem}
\newtheorem{corollary}[theorem]{Corollary}
\newtheorem{definition}[theorem]{Definition}
\newtheorem{lemma}[theorem]{Lemma}
\newtheorem{remark}[theorem]{Remark}
\newenvironment{proof}[1][Proof]{\noindent\textbf{#1.} }{\ \rule{0.5em}{0.5em}}
\newdimen\dummy
\begin{document}

\title{Structure and regularity of the global attractor of a reaction-diffusion
equation with non-smooth nonlinear term}
\author{Oleksiy V. Kapustyan $^{1}$, Pavlo O. Kasyanov $^{2}$, Jos\'{e} Valero $^{3}$\\{\small ${}^{1}$~Kyiv National Taras Shevchenko University,} {\small Kyiv,
Ukraine. \textit{E-mail: alexkap@univ.kiev.ua}}\\{\small ${}^{2}$Institute for Applied System Analysis at National Technical
University of Ukraine "KPI" of NAS of Ukraine,} {\small Kyiv, Ukraine.
\textit{E-mail: kasyanov@i.ua} }\\{\small ${}^{3}$~Universidad Miguel Hernandez de Elche, }{\small Centro de
Investigaci\'{o}n Operativa, Avda. Universidad s/n, }{\small 03202}%
-{\small Elche, Spain. \textit{E-mail: jvalero@umh.es\bigskip\bigskip}}}
\maketitle

\begin{abstract}
In this paper we study the structure of the global attractor for a
reaction-diffusion equation in which uniqueness of the Cauchy problem is not
guarantied. We prove that the global attractor can be characterized using
either the unstable manifold of the set of stationary points or the stable one
but considering in this last case only solutions in the set of bounded
complete trajectories.

\end{abstract}

\bigskip\bigskip

\textbf{AMS Subject Classification (2010):} 35B40, 35B41, 35K55, 37B25, 58C06

\textbf{Key words:} reaction-diffusion equations, set-valued dynamical system,
global attractor, unstable manifolds

\textbf{Short title: }Structure of the attractor of reaction-diffusion equations

\bigskip

\section{Introduction}

In this paper we study the structure of the global attractor of a
reaction-diffusion equation in which the nonlinear term satisfy suitable
growth and dissipative conditions, but there is no condition ensuring
uniqueness of the Cauchy problem (like e.g. a monotonicity assumption). Such
equation generates in the general case a multivalued semiflow having a global
compact attractor (see \cite{Kap1}, \cite{KapustValero06}). Also, it is known
\cite{KMVY} that the attractor is the union of all bounded complete
trajectories of the semiflow.

If we study the global attractor in more detail we can get a better
understanding of \ the dynamics of the semiflow by restricting our attention
inside the attractor. In particular, it is important to establish the
relationship between the attractor and the stable and unstable manifolds of
the set of stationary points. In the single-valued case, when for example the
nonlinear term is a polynomial or its derivative satisfies some assumptions,
it is well known \cite{BabinVishik85}, \cite{BabinVishik89}, \cite{Temam2}
that the attractor is the unstable manifold of the set of stationary points.
Moreover, if the set of stationary points is discrete, then it is the union of
all heteroclinic orbits connecting the stationary points. In more particular
parabolic equations the structure of the attractor has been completely
understood by obtaining a list of which stationary points are joined to each
other. This the case of the famous Chafee-Infante equation \cite{Henry85} or
general scalar parabolic equations under suitable restrictions
\cite{Fiedler85}, \cite{Fiedler96}, \cite{Rocha88}, \cite{Rocha91}. Also, in
\cite{Hale} similar results are obtained for retarded differential equations.

In \cite{ArrRBVal} the structure of the global attractor of a scalar parabolic
differential inclusion generating a multivalued semiflow is studied, obtaining
a partial description about which pairs of stationary points are joined. As
far as we know this is the only published result about the heteroclinic
connections between stationary points in the multivalued case.

Let $\mathbb{F}$ be the set of all complete trajectories, $\mathbb{K}$ be the
set of all bounded complete trajectories and $\mathfrak{R}$ the set of
equilibria. We define the sets%
\[%
\begin{array}
[c]{c}%
M^{-}(\mathfrak{R})=\left\{  z\,:\,\exists\gamma(\cdot)\in\mathbb{K}%
,\,\ \gamma(0)=z,\,\,\,\ \mathrm{dist}_{L^{2}(\Omega)}(\gamma(t),\mathfrak{R}%
)\rightarrow0,\,\ t\rightarrow+\infty\right\}  ,\\
M^{+}(\mathfrak{R})=\left\{  z\,:\,\exists\gamma(\cdot)\in\mathbb{F}%
,\,\ \gamma(0)=z,\,\,\,\ \mathrm{dist}_{L^{2}(\Omega)}(\gamma(t),\mathfrak{R}%
)\rightarrow0,\,\ t\rightarrow-\infty\right\}  .
\end{array}
\]
We prove in this paper that the global attractor of a rather general
reaction-diffusion equation without uniqueness can be described in terms of
either $M^{+}(\mathfrak{R})$ or $M^{-}(\mathfrak{R})$, that is, the unstable
manifold of the set of stationary points or the stable one but considering in
this last case only solutions in the set of bounded complete trajectories.

In Section \ref{StrWeak} it is proved that the attractor of the multivalued
semiflow generated by weak solutions in the phase space $L^{2}\left(
\Omega\right)  $ is the closure of $M^{-}(\mathfrak{R})$. Also, $M^{+}%
(\mathfrak{R})$ is contained in the attractor, and coincides with it when
uniqueness takes place for regular initial data.

In Section \ref{StrReg} we consider the multivalued semiflow generated by
regular solutions, which are the weak solutions which become strong after an
arbitrary small time. We prove first the existence of a global attractor in
the phase space $L^{2}\left(  \Omega\right)  $ which is, moreover, compact in
$H_{0}^{1}\left(  \Omega\right)  $. After that we establish that it coincides
with the unstable manifold of the set of stationary points, and also with the
stable one when we consider only bounded complete solutions.

In Section \ref{StrStrong} we consider the multivalued semiflow generated by
strong solutions. We prove first the existence of a global attractor in the
phase space $H_{0}^{1}\left(  \Omega\right)  $ and that the attractors of the
regular and strong cases coincide. Finally, the same result about the
structure of the attractor as in the case of regular solutions is given.

\section{Setting of the problem}

In a bounded domain $\Omega\subset\mathbb{R}^{3}$ with sufficiently smooth
boundary $\partial\Omega$ we consider the problem
\begin{equation}
\left\{
\begin{array}
[c]{l}%
u_{t}-\Delta u+f(u)=h,\quad x\in\Omega,\ t>0,\\
u|_{\partial\Omega}=0,
\end{array}
\right.  \label{1}%
\end{equation}
where
\begin{equation}%
\begin{array}
[c]{c}%
h\in L^{2}(\Omega),\ \\
f\in C(\mathbb{R}),\\
|f(u)|\leq C_{1}(1+|u|^{3}),\quad\forall u\in\mathbb{R},\\
f(u)u\geq\alpha|u|^{4}-C_{2},\quad\forall u\in\mathbb{R},
\end{array}
\label{2}%
\end{equation}
with $C_{1},C_{2},\alpha>0$.

We denote by $A$ the operator $-\Delta$ with Dirichlet boundary conditions, so
that $D\left(  A\right)  =H^{2}\left(  \Omega\right)  \cap H_{0}^{1}\left(
\Omega\right)  .$ As usual, denote the first eigenvalue of $A$ by $\lambda
_{1}.$

Denote $F(u)=\int_{0}^{u}f(s)ds$. From (\ref{2}) we have that $\liminf
\limits_{|u|\rightarrow\infty}\frac{f(u)}{u}=\infty$, and for some
$D_{1},D_{2},\delta>0,$%
\begin{equation}
|F(u)|\leq D_{1}(1+|u|^{4}),\,\ F(u)\geq\delta u^{4}-D_{2},\quad\forall
u\in\mathbb{R}. \label{PropF}%
\end{equation}

The function $u\in L_{loc}^{2}(0,+\infty;H_{0}^{1}(\Omega))\bigcap L_{loc}%
^{4}(0,+\infty;L^{4}(\Omega))$ is called a weak solution of (\ref{1}) on
$(0,+\infty)$ if for all $T>0\,,\ v\in H_{0}^{1}\left(  \Omega\right)
,\,\eta\in C_{0}^{\infty}(0,T),$
\begin{equation}
-\int\limits_{0}^{T}(u,v)\eta_{t}dt+\int\limits_{0}^{T}\left(  (u,v)_{H_{0}%
^{1}(\Omega)}+(f(u),v)-(h,v)\right)  \eta dt=0, \label{3}%
\end{equation}
where $\Vert\cdot\Vert,\,\ (\cdot,\cdot)$ are the norm and the scalar product
in $L^{2}(\Omega)$. We denote by $\Vert\cdot\Vert_{X}$ the norm in the
abstract Banach space $X$, whereas $(\cdot,\cdot)_{H}$ will be the scalar
product in the abstract Hilbert space $H$. Also, $P\left(  X\right)  $ will be
the set of all non-empty subsets of $X.$

It is well known \cite[Theorem 2]{ACRV2010} or \cite[p.284]{ChepVishikBook}
that for any $u_{0}\in L^{2}(\Omega)$ there exists at least one weak solution
of (\ref{1}) with $u(0)=u_{0}$ (and it may be non unique) and that any weak
solution of (\ref{1}) belongs to $C\left(  [0,+\infty);L^{2}(\Omega)\right)
$. Moreover, the function $t\mapsto\Vert u(t)\Vert^{2}$ is absolutely
continuous and
\begin{equation}
\frac{1}{2}\frac{d}{dt}\Vert u(t)\Vert^{2}+\Vert u(t)\Vert_{H_{0}^{1}(\Omega
)}^{2}+\left(  f(u(t)),u(t)\right)  -(h,u(t))=0\text{ a.e.} \label{4}%
\end{equation}
We define
\begin{equation}%
\begin{array}
[c]{c}%
K^{+}=\left\{  u(\cdot)\,:\,u(\cdot)\,\text{is a weak solution of (\ref{1}%
)}\right\}  ,\\
G:\mathbb{R}^{+}\times L^{2}(\Omega)\rightarrow P(L^{2}(\Omega)),\\
G(t,u_{0})=\left\{  u(t)\,:\,u(\cdot)\in K^{+},\,\ u(0)=u_{0}\right\}  .
\end{array}
\label{5}%
\end{equation}

\begin{definition}
Let $X$ be a complete metric space. The multivalued map $G:\mathbb{R}%
^{+}\times X\rightarrow P(X)$ is a multivalued semiflow (m-semiflow) if:

\begin{enumerate}
\item $G(0,u_{0})=u_{0},\,\ \forall u_{0}\in X;$

\item $G(t+s,u_{0})\subset G(t,G(s,u_{0})),$ $\forall\ t,s\geq0$, $\forall
u_{0}\in X$.
\end{enumerate}

It is called strict if $G(t+s,u_{0})=G(t,G(s,u_{0})),$ $\forall\ t,s\geq0$,
$\forall u_{0}\in X$.
\end{definition}

\begin{definition}
\label{defi:glatr} The set $\Theta\subset X$ is called a global attractor of
$G$, if:

\begin{enumerate}
\item $\Theta\subset G\left(  t,{\Theta}\right)  ,$ $\forall t\geq0$
(negatively semi-invariance);

\item for any bounded set $B\subset X$,%
\begin{equation}
\mathrm{dist}_{X}(G(t,B),\Theta)\rightarrow0,\ \text{as }t\rightarrow+\infty,
\label{Attraction}%
\end{equation}
where
\[
\mathrm{dist}_{X}(A,B)=\sup\limits_{x\in A}\inf\limits_{y\in B}\Vert
x-y\Vert_{X}.
\]

\item It is mininal, gthat is, for any closed set $C$ satisfying
(\ref{Attraction}) it holds $\Theta\subset C.$
\end{enumerate}

The global attractor is called invariant if $\Theta=G\left(  t,{\Theta
}\right)  ,$ $\forall t\geq0.$
\end{definition}

The map $G$ defined by (\ref{5}) is a strict multivalued semiflow which
possesses a global compact invariant connected attractor \cite{Kap1},
\cite{KapustValero06}, \cite{KapVal09}. Our aim is to give a characterization
of the attractor. First we shall define complete trajectories for problem
(\ref{1}).

\begin{definition}
\label{defi:traject}The map $\gamma:\mathbb{R}\rightarrow L^{2}(\Omega)$ is
called a complete trajectory of $K^{+}$ if
\[
\,\,\,\ \gamma(\cdot+h)|_{[0,+\infty)}\in K^{+},\ \forall h\in\mathbb{R},
\]
that is, if $\gamma|_{[\tau,+\infty)}$ is weak solution of (\ref{1}) on
$(\tau,+\infty),\,\ \forall\tau\in\mathbb{R}$.

In the following section (see Theorem \ref{PropK+}) it will be shown that this
is equivalent to the following:
\[
\ \gamma(t+s)\in G(t,\gamma(s)),\ \forall t\geq0,\,s\in\mathbb{R}.
\]

\end{definition}

Let $\mathbb{K}$ be the set of all bounded (in the $L^{2}(\Omega)$ norm)
complete trajectories. It is known that the global attractor of $G$ is the
union of all bounded complete trajectories. We recall that the global
attractor $\Theta$ is called stable if for any $\epsilon>0$ there exists
$\delta>0$ such that
\[
G(t,O_{\delta}(\Theta))\subset O_{\epsilon}(\Theta)\text{, }\forall\ t\geq0.
\]

\begin{theorem}
\label{teor1} \cite[Theorem 3.18]{KMVY} Under conditions (\ref{2}) the
m-semiflow (\ref{5}) has a global compact invariant attractor $\Theta\subset
L^{2}(\Omega)$ which is connected, stable and
\begin{equation}
\Theta=\left\{  \gamma(0)\,:\,\gamma(\cdot)\in\mathbb{K}\right\}
=\bigcup\limits_{t\in\mathbb{R}}\left\{  \gamma(t)\,:\,\gamma(\cdot
)\in\mathbb{K}\right\}  . \label{6}%
\end{equation}

\end{theorem}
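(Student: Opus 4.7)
The plan is to follow the standard program for the global attractor of a strict multivalued semiflow: (i) exhibit a bounded absorbing set in $L^{2}(\Omega)$; (ii) establish asymptotic compactness of $G$; (iii) verify that $G(t,\cdot)$ is upper semicontinuous with compact values, so that an abstract theorem of Melnik--Valero type produces a minimal compact invariant attractor $\Theta$; then (iv) extract the characterisation (\ref{6}) from invariance and (v) deduce connectedness and stability as corollaries.

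For (i), I would combine the energy identity (\ref{4}) with the dissipation bound $(f(u),u)\geq\alpha\|u\|_{L^{4}}^{4}-C_{2}|\Omega|$ coming from (\ref{2}), Poincar\'{e}'s inequality $\|u\|_{H_{0}^{1}}^{2}\geq\lambda_{1}\|u\|^{2}$, and Young's inequality applied to $(h,u)$; this yields a linear differential inequality $\tfrac{d}{dt}\|u(t)\|^{2}+c\|u(t)\|^{2}\leq K$ valid uniformly for every weak solution, hence a fixed ball $B_{0}\subset L^{2}(\Omega)$ absorbs every bounded set in finite time. For (ii), I would exploit the smoothing effect of the parabolic equation: the cubic growth in (\ref{2}) and the Sobolev embedding $H_{0}^{1}(\Omega)\hookrightarrow L^{6}(\Omega)$, which works precisely because $\dim\Omega=3$, ensure $f(u)\in L^{2}(\Omega)$ whenever $u\in H_{0}^{1}(\Omega)$; formally testing (\ref{1}) with $Au$ then produces an $H_{0}^{1}$-bound for $t\geq\tau>0$, and the compact embedding $H_{0}^{1}\hookrightarrow L^{2}$ delivers asymptotic compactness.

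For (iii), upper semicontinuity reduces to a weak-compactness argument: given $u_{0,n}\to u_{0}$ in $L^{2}(\Omega)$ and weak solutions $u_{n}$ with $u_{n}(0)=u_{0,n}$, the a priori estimates provide a subsequence converging weakly in $L^{2}(0,T;H_{0}^{1}(\Omega))$ and, via Aubin--Lions, strongly in $L^{2}(0,T;L^{2}(\Omega))$; continuity of $f$ together with the growth bound let one pass to the limit in (\ref{3}), yielding a weak solution starting from $u_{0}$. Invoking the abstract existence theorem and using strictness of $G$ produces a compact invariant attractor satisfying $\Theta=G(t,\Theta)$ for all $t\geq 0$.

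The characterisation (\ref{6}) then follows by iterating full invariance: for $z\in\Theta=G(n,\Theta)$, pick $z_{-n}\in\Theta$ and a weak solution $\gamma_{n}:[-n,0]\to L^{2}(\Omega)$ with $\gamma_{n}(-n)=z_{-n}$ and $\gamma_{n}(0)=z$; a diagonal extraction built on the uniform bounds from (i)--(ii) produces a bounded complete trajectory $\gamma\in\mathbb{K}$ with $\gamma(0)=z$, and its forward continuation remains in $\Theta$ by invariance. Connectedness of $\Theta$ follows from connectedness of $L^{2}(\Omega)$ together with connectedness of $G(t,u_{0})$ (which holds by a Kneser-type argument for the family $K^{+}$), while stability is a direct consequence of upper semicontinuity combined with invariance. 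The step I expect to require the most care is the asymptotic compactness in (ii): because uniqueness fails, the estimates must be uniform across the entire family $K^{+}$, so one typically has to work with the energy equation (\ref{4}) directly, rather than transferring bounds from a Galerkin approximation of a single solution.
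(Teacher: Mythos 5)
Your overall architecture (bounded absorbing set in $L^{2}(\Omega)$, asymptotic compactness, upper semicontinuity with compact values, the Melnik--Valero existence theorem, then the characterisation (\ref{6}) by a diagonal extraction over negative invariance, and connectedness via a Kneser-type property) is exactly the program of the cited source. Note that the paper does not actually prove Theorem \ref{teor1}: it quotes it from \cite[Theorem 3.18]{KMVY}. What it does re-prove is the characterisation (\ref{6}), abstractly in Theorems \ref{CaractAttrAbs} and \ref{CaractAttrAbs2} under $(K1)$--$(K4)$ and then for $K^{+}$ in Theorem \ref{PropK+}; your step (iv) is the same diagonal argument as in Theorem \ref{CaractAttrAbs}.

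The step that would fail as you first present it is (ii). Testing (\ref{1}) with $Au$ cannot be justified for an \emph{arbitrary} weak solution: such a solution is only known to lie in $L_{loc}^{2}(0,+\infty;H_{0}^{1}(\Omega))\cap L_{loc}^{4}(0,+\infty;L^{4}(\Omega))\cap C([0,+\infty);L^{2}(\Omega))$, and since $G$ is built from \emph{all} weak solutions, the $H_{0}^{1}$-smoothing bound obtained for the particular solutions produced by a Galerkin scheme (Lemma \ref{lem:4}) does not transfer to every element of $K^{+}$ -- this is precisely why the paper later introduces the separate classes $K_{r}^{+}$ and $K_{s}^{+}$ and only obtains $\Theta_{r}\subset\Theta$ rather than equality. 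So the energy-equation route you relegate to a closing caveat is not the more careful option but the only available one: asymptotic compactness in $L^{2}(\Omega)$ is obtained by extracting weak limits and upgrading to strong $L^{2}$ convergence via the energy equality (\ref{4}), which \emph{is} valid for every weak solution. Two smaller points: your argument for (\ref{6}) only yields $\Theta\subset\{\gamma(0):\gamma\in\mathbb{K}\}$; the reverse inclusion needs the (easy) observation that $B_{\gamma}=\cup_{t\in\mathbb{R}}\gamma(t)$ is bounded with $B_{\gamma}\subset G(t,B_{\gamma})$, hence attracted to, and therefore contained in, $\Theta$. And connectedness does not follow from connectedness of the phase space alone; it requires the Kneser property of $K^{+}$ (compactness and connectedness of $G(t,u_{0})$), a nontrivial input proved in \cite{KapVal09}.
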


Let $\mathfrak{R}$ be the set of all stationary points of (\ref{1}), i.e., the
points $u\in H_{0}^{1}\left(  \Omega\right)  $ such that%
\begin{equation}
-\Delta u+f(u)=h\text{ in }H^{-1}\left(  \Omega\right)  , \label{Stationary}%
\end{equation}
and%
\begin{equation}%
\begin{array}
[c]{c}%
M^{-}(\mathfrak{R})=\left\{  z\,:\,\exists\gamma(\cdot)\in\mathbb{K}%
,\,\ \gamma(0)=z,\,\,\,\ \mathrm{dist}_{L^{2}(\Omega)}(\gamma(t),\mathfrak{R}%
)\rightarrow0,\,\ t\rightarrow+\infty\right\}  ,\\
M^{+}(\mathfrak{R})=\left\{  z\,:\,\exists\gamma(\cdot)\in\mathbb{F}%
,\,\ \gamma(0)=z,\,\,\,\ \mathrm{dist}_{L^{2}(\Omega)}(\gamma(t),\mathfrak{R}%
)\rightarrow0,\,\ t\rightarrow-\infty\right\}  .
\end{array}
\label{M}%
\end{equation}

It is known \cite{Temam2}, \cite[p.106]{BabinVishik89}, \cite{BabinVishik85}%
\textbf{\ }that under additional conditions on $f$, like
\begin{align}
f  &  =\sum\limits_{i=0}^{3}\alpha_{i}u^{i},\,\ \alpha_{3}>0,\,\ \label{7}\\
f  &  \in C^{1}(\mathbb{R})\text{ and }\exists C_{3}>0\text{ such that
}\left\vert f^{\prime}(u)\right\vert \leq C_{3}\left\vert u\right\vert
^{\frac{4}{3}},\ \forall u\in\mathbb{R},\nonumber\\
\text{or }f  &  \in C^{1}(\mathbb{R})\text{ and }\exists C_{3}>0\text{ such
that }f^{\prime}\geq-C_{3},\nonumber
\end{align}
$G$ is a single-valued semigroup, the set $\Theta$ is bounded in $H^{2}%
(\Omega)\bigcap H_{0}^{1}(\Omega)$ and
\[
\Theta=M^{+}(\mathfrak{R})
\]
Moreover, in \cite[p.106]{BabinVishik89} it is proved that
\begin{equation}
\Theta=M^{+}(\mathfrak{R})=M^{-}(\mathfrak{R}). \label{8}%
\end{equation}

$M^{+}(\mathfrak{R})$ is the unstable set of $\mathfrak{R}$. We note that
under conditions (\ref{7}) attraction takes place in $H_{0}^{1}(\Omega)$. We
observe that in this case an equivalent definition of the set $M^{+}%
(\mathfrak{R})$ is the following
\[
M^{+}(\mathfrak{R})=\left\{  z\,:\,\exists\gamma(\cdot)\in\mathbb{K}%
,\,\ \gamma(0)=z,\,\,\,\ \mathrm{dist}_{L^{2}(\Omega)}(\gamma(t),\mathfrak{R}%
)\rightarrow0,\,\ t\rightarrow-\infty\right\}  ,
\]
as for every complete trajectory $\gamma\left(  \text{\textperiodcentered
}\right)  \in\mathbb{F}$ as in (\ref{M}) we have that the set $\cup
_{t\in\mathbb{R}}\gamma\left(  t\right)  $ is bounded, so that the inclusion
$\gamma\left(  \text{\textperiodcentered}\right)  \in\mathbb{K}$ follows.

The aim of our paper is to obtain something like (\ref{8}) for $K^{+}$ under
the general conditions (\ref{2}). Moreover, taking a more regular set of
solutions we will show that the equality (\ref{8}) holds.

\section{About some properties of complete trajectories and fixed points of
m-semiflows}

We will prove in this section some useful properties of fixed points and
complete trajectories for abstract multivalued semiflows.

Consider a complete metric space $X$ and let
\[
W^{+}=C(\mathbb{R}^{+};X).
\]
Let $\mathcal{R}\subset W^{+}$ be some set of functions such that the
following conditions hold:

\begin{enumerate}
\item[$\left(  K1\right)  $] For any $x\in X$ there exists $\varphi
\in\mathcal{R}$ such that $\varphi\left(  0\right)  =x.$

\item[$\left(  K2\right)  $] $\varphi_{\tau}\left(  \text{\textperiodcentered
}\right)  =\varphi\left(  \text{\textperiodcentered}+\tau\right)
\in\mathcal{R}$ for any $\tau\geq0$, $\varphi\left(  \text{\textperiodcentered
}\right)  \in\mathcal{R}$ (translation property).
\end{enumerate}

Consider also some additional assumptions, which will be needed in order to
obtain good properties. Namely:

\begin{enumerate}
\item[$\left(  K3\right)  $] Let $\varphi_{1},\varphi_{2}\in\mathcal{R}$ be
such that $\varphi_{2}(0)=\varphi_{1}(s)$, where $s>0$. Then the function
$\varphi\left(  \text{\textperiodcentered}\right)  ,$ defined by%
\[
\varphi(t)=\left\{
\begin{array}
[c]{c}%
\varphi_{1}\left(  t\right)  \text{ if }0\leq t\leq s,\\
\varphi_{2}\left(  t-s\right)  \text{ if }s\leq t,
\end{array}
\right.
\]
belongs to $\mathcal{R}$ (concatenation property).

\item[$\left(  K4\right)  $] For any sequence $\varphi^{n}\left(
\text{\textperiodcentered}\right)  \in\mathcal{R}$ such that $\varphi
^{n}\left(  0\right)  \rightarrow\varphi_{0}$ in $X$, there exists a
subsequence $\varphi^{n_{k}}$ and $\varphi\in\mathcal{R}$ such that
\[
\varphi^{n_{k}}\left(  t\right)  \rightarrow\varphi\left(  t\right)  \text{,
}\forall t\geq0.
\]

\end{enumerate}

We define the multivalued map $G:\mathbb{R}^{+}\times X\rightarrow P(X)$ in
the following way:
\[
y\in G\left(  t,x\right)  \ \text{if}\ \exists\ \varphi\in\mathcal{R}%
\ \text{such that }y=\varphi\left(  t\right)  ,\ \varphi\left(  0\right)  =x.
\]

\begin{lemma}
\label{MS}\cite[Lemma 9]{KapPanVal} Let $\left(  K1\right)  -\left(
K2\right)  $ hold. Then $G$ is a multivalued semiflow. Moreover, if $\left(
K3\right)  $ is true, then $G$ is a strict m-semiflow.
\end{lemma}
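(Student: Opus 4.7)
The plan is to verify the two semiflow axioms directly from the definition of $G$, using only $(K1)$ and $(K2)$ for the first part and additionally invoking $(K3)$ for the strict equality.

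First I would handle the identity axiom $G(0,u_0)=\{u_0\}$. Unfolding the definition, $y\in G(0,u_0)$ means there is $\varphi\in\mathcal{R}$ with $\varphi(0)=u_0$ and $y=\varphi(0)$, so trivially $y=u_0$; conversely $(K1)$ gives the existence of at least one such $\varphi$, so $u_0\in G(0,u_0)$. Next I would verify the inclusion $G(t+s,u_0)\subset G(t,G(s,u_0))$. Given $y\in G(t+s,u_0)$, pick $\varphi\in\mathcal{R}$ with $\varphi(0)=u_0$ and $\varphi(t+s)=y$, and set $\psi=\varphi(\,\cdot\,+s)$. By the translation property $(K2)$, $\psi\in\mathcal{R}$; moreover $\psi(0)=\varphi(s)\in G(s,u_0)$ and $\psi(t)=\varphi(t+s)=y$, so $y\in G(t,\psi(0))\subset G(t,G(s,u_0))$. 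This establishes that $G$ is a multivalued semiflow.

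For the strict case, I would prove the reverse inclusion $G(t,G(s,u_0))\subset G(t+s,u_0)$ under $(K3)$. Take $y\in G(t,G(s,u_0))$; then $y\in G(t,z)$ for some $z\in G(s,u_0)$, so there exist $\varphi_1\in\mathcal{R}$ with $\varphi_1(0)=u_0$, $\varphi_1(s)=z$, and $\varphi_2\in\mathcal{R}$ with $\varphi_2(0)=z=\varphi_1(s)$, $\varphi_2(t)=y$. The concatenation hypothesis $(K3)$ produces a single $\varphi\in\mathcal{R}$ coinciding with $\varphi_1$ on $[0,s]$ and with $\varphi_2(\,\cdot\,-s)$ on $[s,+\infty)$; in particular $\varphi(0)=u_0$ and $\varphi(t+s)=\varphi_2(t)=y$, hence $y\in G(t+s,u_0)$.

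Since every step is a bookkeeping exercise of rewriting trajectories under translation or gluing two trajectories at a common point, there is no real obstacle in this lemma. The only subtlety worth flagging is that $(K2)$ allows only forward translations $\tau\geq 0$, which is exactly what is needed when passing from $\varphi$ to $\psi=\varphi(\,\cdot\,+s)$ with $s\geq 0$; no backwards translation is ever invoked, so the proof remains internally consistent with the hypotheses as stated.
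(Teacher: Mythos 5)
Your proof is correct. The paper itself gives no proof of this lemma --- it is simply quoted from \cite[Lemma 9]{KapPanVal} --- and your direct verification of the two semiflow axioms (identity via $(K1)$, subadditivity via the forward translation in $(K2)$, and the reverse inclusion via gluing in $(K3)$) is exactly the standard argument one would expect there. The only point worth a half-sentence in a written version: $(K3)$ as stated requires $s>0$, so the case $s=0$ of the reverse inclusion should be disposed of separately, which is immediate since $G(0,u_0)=\{u_0\}$ gives $G(t,G(0,u_0))=G(t,u_0)=G(t+0,u_0)$ directly.
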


We define now the concept of fixed point and complete trajectory for
$\mathcal{R}$.

\begin{definition}
\label{DefFixed}The point $z\in X$ is a fixed point of $\mathcal{R}$, if
$\varphi\left(  t\right)  \equiv z\in\mathcal{R}.$ The set of all fixed points
will be denoted by $\mathfrak{R}_{\mathcal{R}}\mathfrak{.}$

The map $\gamma:\mathbb{R}\rightarrow X$ is called a complete trajectory of
$\mathcal{R}$ if
\[
\,\,\,\ \gamma(\cdot+h)|_{[0,+\infty)}\in\mathcal{R},\ \forall h\in
\mathbb{R}.
\]

\end{definition}

We will show that the fixed points of $\mathcal{R}$ coincide with the
stationary points of $G$ under assumptions $\left(  K1\right)  -\left(
K4\right)  $.

\begin{lemma}
\label{FixedPointsCar}Let $\left(  K1\right)  -\left(  K2\right)  $ hold. Then
$z\in\mathfrak{R}_{\mathcal{R}}$ implies $z\in G\left(  t,z\right)  $ for all
$t\geq0.$

Let $\left(  K1\right)  -\left(  K4\right)  $ hold. Then $z\in\mathfrak{R}%
_{\mathcal{R}}$ if and only if $z\in G\left(  t,z\right)  $ for all $t\geq0.$
\end{lemma}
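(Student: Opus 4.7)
The forward direction is immediate from the definitions: if $z\in\mathfrak{R}_{\mathcal{R}}$, the constant map $\varphi(t)\equiv z$ lies in $\mathcal{R}$ and satisfies $\varphi(0)=z$, so $z=\varphi(t)\in G(t,z)$ for every $t\geq 0$; this uses nothing beyond the definition of $G$ and of $\mathfrak{R}_{\mathcal{R}}$. For the converse, assume $z\in G(t,z)$ for all $t\geq 0$. My plan is to construct, by a two-stage approximation, an element $\varphi\in\mathcal{R}$ with $\varphi\equiv z$. Stage 1 produces, for each $n\in\mathbb{N}$, a function $\varphi_n\in\mathcal{R}$ satisfying $\varphi_n(k\cdot 2^{-n})=z$ for every $k\in\mathbb{N}$. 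Stage 2 extracts a limit of the $\varphi_n$ via $(K4)$ and uses the density of dyadic rationals together with the continuity of elements of $\mathcal{R}$ to force the limit to be the constant function $z$.

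For Stage 1, fix $n$. Since $z\in G(2^{-n},z)$ there exists $\psi_n\in\mathcal{R}$ with $\psi_n(0)=z$ and $\psi_n(2^{-n})=z$. I iterate $(K3)$ to build finite concatenations: set $\varphi_n^{(1)}:=\psi_n$, and given $\varphi_n^{(k)}\in\mathcal{R}$ with $\varphi_n^{(k)}(j\,2^{-n})=z$ for $j=0,1,\dots,k$, apply $(K3)$ at $s=k\,2^{-n}$ with second piece $\psi_n$ (the compatibility $\psi_n(0)=z=\varphi_n^{(k)}(k\,2^{-n})$ holds) to obtain $\varphi_n^{(k+1)}\in\mathcal{R}$ with $\varphi_n^{(k+1)}|_{[0,k\,2^{-n}]}=\varphi_n^{(k)}|_{[0,k\,2^{-n}]}$ and $\varphi_n^{(k+1)}((k+1)\,2^{-n})=z$. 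Consequently, for each fixed $t\geq 0$ the value $\varphi_n^{(k)}(t)$ is eventually constant in $k$; call the pointwise limit $\varphi_n(t)$. Now apply $(K4)$ to the sequence $\{\varphi_n^{(k)}\}_k$, whose initial values $\varphi_n^{(k)}(0)=z$ trivially converge; the resulting limit $\tilde\varphi\in\mathcal{R}$ must, by uniqueness of pointwise limits, coincide with $\varphi_n$, so $\varphi_n\in\mathcal{R}$ and $\varphi_n(k\,2^{-n})=z$ for all $k\in\mathbb{N}$.

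For Stage 2, apply $(K4)$ to $\{\varphi_n\}$: because $\varphi_n(0)=z$ for every $n$, there is a subsequence $\varphi_{n_j}$ and some $\varphi\in\mathcal{R}$ with $\varphi_{n_j}(t)\to\varphi(t)$ for every $t\geq 0$. Fix any dyadic rational $t=k\,2^{-m}$; for $n_j\geq m$ we have $t=(k\,2^{n_j-m})\cdot 2^{-n_j}$, whence $\varphi_{n_j}(t)=z$, and passing to the limit gives $\varphi(t)=z$. Since $\varphi\in\mathcal{R}\subset C(\mathbb{R}^+;X)$ and the dyadic rationals are dense in $[0,+\infty)$, this forces $\varphi\equiv z$, so $z\in\mathfrak{R}_{\mathcal{R}}$. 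The main obstacle is the inner passage to the limit in $k$ in Stage 1: $(K3)$ only provides concatenation of two elements, so infinitely many concatenations do not obviously yield an element of $\mathcal{R}$, and one has to pair this iteration with $(K4)$ and then match the $(K4)$-limit with the pointwise stabilisation coming from the construction; the outer limit in $n$, the continuity argument, and the dyadic density are then routine.
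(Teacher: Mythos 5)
Your proof is correct and follows essentially the same route as the paper: dyadic grids, concatenation via $\left(K3\right)$, a $\left(K4\right)$ compactness limit, and density of dyadics plus continuity. The only difference is in handling the obstacle you flag at the end: the paper avoids the infinite concatenation altogether by concatenating only the finitely many pieces on $[0,n]$ and appending an arbitrary element of $\mathcal{R}$ starting at $z$ for $t\geq n$, so that $u^{n}(t)=z$ only on $D_{n}\subset\lbrack0,n]$ --- but $\cup_{n}D_{n}$ is still dense in $[0,+\infty)$, and your extra application of $\left(K4\right)$ in Stage 1 (which is valid) becomes unnecessary.
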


\begin{proof}
When $\left(  K1\right)  -\left(  K2\right)  $ hold, if $z\in\mathfrak{R}%
_{\mathcal{R}}$, it is obvious that $z\in G\left(  t,z\right)  $ for all
$t\geq0.$

Conversely, let $\left(  K1\right)  -\left(  K4\right)  $ hold and let $z\in
G\left(  t,z\right)  $ for all $t\geq0.$ This means that for any $T>0$ there
exists $u^{T}\in\mathcal{R}$ such that $u^{T}\left(  T\right)  =z$ and
$u^{T}\left(  0\right)  =z.$ Consider in the interval $[0,n]$ the $n$-dyadic
partition $D_{n}:=\{j2^{-n}\,:\,j=0,1,2,...,n2^{n}\}$. In each interval
$[j2^{-n},\left(  j+1\right)  2^{-n}]$, $j=0,1,...,n2^{n}-1,$ we consider
$u_{j}^{n}\in\mathcal{R}$ such that $u_{j}^{n}\left(  0\right)  =u_{j}%
^{n}\left(  2^{-n}\right)  =z$. Then we take the concatenation of all these
functions
\[
u^{n}\left(  t\right)  =\left\{
\begin{array}
[c]{cc}%
u_{j}^{n}\left(  t-j2^{-n}\right)  \text{,} & \text{if }j2^{-n}\leq
t\leq\left(  j+1\right)  2^{-n}\text{, }j=0,1,...,n2^{n}-1,\\
\varphi\left(  t-n\right)  , & \text{if }t\geq n,
\end{array}
\right.
\]
where $\varphi\in\mathcal{R},$ with $\varphi\left(  0\right)  =z,$ is
arbitrary. Thus%
\[
u^{n}\left(  t\right)  =z\text{ for all }t\in D_{n}.
\]
By conditions $\left(  K3\right)  -\left(  K4\right)  $ we have that $u^{n}$
belongs to $\mathcal{R}$ and the existence of $u\in\mathcal{R}$ and a
subsequence $u^{n_{k}}$ such that
\[
u^{n_{k}}\left(  t\right)  \rightarrow u\left(  t\right)  \text{ in }X\text{
for all }t\geq0.
\]
Let $D=\cup_{n\in\mathbb{N}}D_{n}$. Hence, $u\left(  t\right)  =z$ for all
$t\in D$. Since $u\in C([0,+\infty),X)$, we obtain that $u\left(  t\right)
=z$ for all $t\geq0$, so that $z\in\mathfrak{R}_{\mathcal{R}}.$
\end{proof}

\bigskip

We will show further the relation between complete trajectories of
$\mathcal{R}$ and $G.$

\begin{lemma}
\label{CompleteTrajEquiv}If $\left(  K1\right)  -\left(  K4\right)  $ hold,
then the map $\gamma:\mathbb{R}\rightarrow X$ is a complete trajectory of
$\mathcal{R}$ if and only if
\begin{equation}
\gamma\left(  t+s\right)  \in G\left(  t,\gamma\left(  s\right)  \right)
\text{ for all }s\in\mathbb{R}\text{ and }t\geq0. \label{PropCompleteTraj}%
\end{equation}
When $\left(  K1\right)  -\left(  K2\right)  $ hold, then any complete
trajectory of $\mathcal{R}$ satisfies (\ref{PropCompleteTraj})
\end{lemma}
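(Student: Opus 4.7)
The plan is to handle the two implications separately. The forward direction (which also establishes the final sentence, requiring no hypothesis beyond the definitions) is a definition chase: if $\gamma$ is a complete trajectory of $\mathcal{R}$ and $s \in \mathbb{R}$, then by Definition~\ref{DefFixed} the shifted map $\varphi := \gamma(\cdot + s)|_{[0,+\infty)}$ belongs to $\mathcal{R}$; since $\varphi(0) = \gamma(s)$ and $\varphi(t) = \gamma(t+s)$, the definition of $G$ yields $\gamma(t+s) \in G(t,\gamma(s))$ for every $t \geq 0$.

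For the converse, assume $(K1)$--$(K4)$ together with (\ref{PropCompleteTraj}), and fix $h \in \mathbb{R}$. I will produce $u \in \mathcal{R}$ that coincides with $\gamma(\cdot + h)$ on $[0,+\infty)$ by imitating the dyadic construction used in the proof of Lemma~\ref{FixedPointsCar}. For each $n \in \mathbb{N}$ and each $j \in \{0,\dots,n2^{n}-1\}$, applying (\ref{PropCompleteTraj}) with $s = h + j2^{-n}$ and $t = 2^{-n}$ supplies $u_j^n \in \mathcal{R}$ with $u_j^n(0) = \gamma(h + j2^{-n})$ and $u_j^n(2^{-n}) = \gamma(h + (j+1)2^{-n})$; by $(K1)$ also pick $\psi^n \in \mathcal{R}$ with $\psi^n(0) = \gamma(h + n)$. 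A finite induction using the two-piece concatenation property $(K3)$ assembles these $n2^{n} + 1$ matching pieces into a single $u^n \in \mathcal{R}$ equal to $u_j^n(\cdot - j2^{-n})$ on $[j2^{-n},(j+1)2^{-n}]$ for $j < n2^n$ and to $\psi^n(\cdot - n)$ on $[n,+\infty)$. By construction $u^n(t) = \gamma(h + t)$ for every $t$ in the dyadic grid $D_n := \{ j2^{-n} : 0 \leq j \leq n2^{n} \}$.

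Since $u^n(0) = \gamma(h)$ for all $n$, condition $(K4)$ delivers $u \in \mathcal{R}$ and a subsequence $u^{n_k}$ with $u^{n_k}(t) \to u(t)$ pointwise on $[0,+\infty)$. For any $t \in D := \bigcup_n D_n$ one has $u^{n_k}(t) = \gamma(h + t)$ for all sufficiently large $k$, hence $u(t) = \gamma(h+t)$ on $D$; the continuity of $u$ and of $\gamma$, together with the density of $D$ in $[0,+\infty)$, then extend this equality to every $t \geq 0$. Thus $\gamma(\cdot + h)|_{[0,+\infty)} = u \in \mathcal{R}$, and since $h$ was arbitrary, $\gamma$ is a complete trajectory of $\mathcal{R}$. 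The main technical point to verify carefully is that the $u^n$ really lie in $\mathcal{R}$: the prescribed endpoint values of the $u_j^n$ and $\psi^n$ match exactly, so $(K3)$ applies inductively without any limit step for the construction of $u^n$ itself. Beyond this the argument is structurally identical to Lemma~\ref{FixedPointsCar}, with the constant trajectory at $z$ replaced by the target trajectory $\gamma(\cdot + h)$.
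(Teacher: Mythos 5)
Your proposal is correct and follows essentially the same route as the paper's own proof: the forward implication is the same definition chase, and the converse uses the identical dyadic-partition construction, concatenation via $\left(K3\right)$, extraction of a convergent subsequence via $\left(K4\right)$, and the density-plus-continuity argument to identify the limit with $\gamma(\cdot+h)$ on all of $[0,+\infty)$. The only (harmless) difference is that you explicitly flag the continuity of $\gamma$ alongside that of $u$ when passing from the dyadic set to all of $[0,+\infty)$, a point the paper leaves implicit.
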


\begin{proof}
Under conditions $\left(  K1\right)  -\left(  K2\right)  $ it is obvious that
any complete trajectory of $\mathcal{R}$ satisfies (\ref{PropCompleteTraj}).

Assume $\left(  K1\right)  -\left(  K4\right)  $. Conversely, let
$\gamma\left(  \text{\textperiodcentered}\right)  $ satisfy
(\ref{PropCompleteTraj}). Consider in the interval $[0,n]$ the $n$-dyadic
partition $D_{n}:=\{j2^{-n}\,:\,j=0,1,2,...,n2^{n}\}$. Let $\tau\in\mathbb{R}$
be arbitrary. In each interval $[j2^{-n},\left(  j+1\right)  2^{-n}]$,
$j=0,1,...,n2^{n}-1,$ we consider $u_{j}^{n}\in\mathcal{R}$ such that
$u_{j}^{n}\left(  0\right)  =\gamma\left(  \tau+j2^{-n}\right)  $, $u_{j}%
^{n}\left(  2^{-n}\right)  =\gamma\left(  \tau+(j+1)2^{-n}\right)  $. We take
the concatenation of all these functions
\[
u^{n}\left(  t\right)  =\left\{
\begin{array}
[c]{cc}%
u_{j}^{n}\left(  t-j2^{-n}\right)  \text{,} & \text{if }j2^{-n}\leq
t\leq\left(  j+1\right)  2^{-n}\text{, }j=0,1,...,n2^{n}-1,\\
\varphi\left(  t-n\right)  , & \text{if }t\geq n,
\end{array}
\right.
\]
where $\varphi\in\mathcal{R},$ with $\varphi\left(  0\right)  =\gamma\left(
\tau+n\right)  ,$ is arbitrary.

Then $u^{n}\in\mathcal{R}$ by $\left(  K3\right)  $ and
\[
u^{n}\left(  t\right)  =\gamma\left(  t+\tau\right)  \text{ for all }t\in
D_{n}.
\]
In view of $\left(  K4\right)  $ there exists $u\in\mathcal{R}$ and a
subsequence $u^{n_{k}}$ such that
\[
u^{n_{k}}\left(  t\right)  \rightarrow u\left(  t\right)  \text{ in }X\text{
for all }t\geq\tau.
\]
Let $D=\cup_{n\in\mathbb{N}}D_{n}$. Hence, $u\left(  t\right)  =\gamma\left(
t+\tau\right)  $ for all $t\in D$. Since $u\in C([0,+\infty),X)$, we obtain
that $u\left(  t\right)  =\gamma\left(  t+\tau\right)  $ for all $t\geq0$, so
that $\gamma\left(  \text{\textperiodcentered}+\tau\right)  \in\mathcal{R}.$
As $\tau\in\mathbb{R}$ is arbitrary, we obtain that $\gamma$ is a complete
trajectory of $\mathcal{R}$.
\end{proof}

\bigskip

Let $\mathbb{K}$ be the set of all bounded complete trajectories of
$\mathcal{R}$. Now we will establish equality (\ref{6}) in the abstract setting.

\begin{theorem}
\label{CaractAttrAbs}Assume that $\left(  K1\right)  -\left(  K2\right)
,\ \left(  K4\right)  $ hold and that $G$ possesses a compact global attractor
$\Theta.$ Then%
\begin{equation}
\Theta=\{\gamma\left(  0\right)  :\gamma\left(  \text{\textperiodcentered
}\right)  \in\mathbb{K}\}=\cup_{t\in\mathbb{R}}\{\gamma\left(  t\right)
:\gamma\left(  \text{\textperiodcentered}\right)  \in\mathbb{K}\}.
\label{EqAttrComplete}%
\end{equation}

\end{theorem}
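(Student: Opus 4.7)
The plan is to establish \eqref{EqAttrComplete} by proving the chain of inclusions $\bigcup_{t\in\mathbb{R}}\{\gamma(t):\gamma\in\mathbb{K}\}\subset\Theta\subset\{\gamma(0):\gamma\in\mathbb{K}\}\subset\bigcup_{t\in\mathbb{R}}\{\gamma(t):\gamma\in\mathbb{K}\}$, where the last inclusion is immediate from translation invariance of $\mathbb{K}$: if $\gamma\in\mathbb{K}$ and $t_0\in\mathbb{R}$, then $\gamma(\cdot+t_0)$ is again a bounded complete trajectory and takes the value $\gamma(t_0)$ at time $0$.

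For the first inclusion, I would fix $\gamma\in\mathbb{K}$ and $t_0\in\mathbb{R}$ and invoke the unconditional half of Lemma \ref{CompleteTrajEquiv}, which needs only $(K1)$-$(K2)$, to obtain $\gamma(t_0)\in G(s,\gamma(t_0-s))$ for every $s\geq0$. Since $B:=\{\gamma(t):t\in\mathbb{R}\}$ is bounded by definition of $\mathbb{K}$, the attraction property of $\Theta$ yields $\mathrm{dist}_X(G(s,B),\Theta)\to0$ as $s\to+\infty$, so $\mathrm{dist}_X(\gamma(t_0),\Theta)=0$; closedness of $\Theta$ then forces $\gamma(t_0)\in\Theta$.

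For the reverse inclusion $\Theta\subset\{\gamma(0):\gamma\in\mathbb{K}\}$, fix $z\in\Theta$. Negative semi-invariance $\Theta\subset G(n,\Theta)$ gives, for each $n\in\mathbb{N}$, a point $z_n\in\Theta$ and $\varphi_n\in\mathcal{R}$ with $\varphi_n(0)=z_n$ and $\varphi_n(n)=z$. I introduce the backward-shifted function $\tilde\varphi_n(t):=\varphi_n(t+n)$ on $[-n,\infty)$, so $\tilde\varphi_n(0)=z$ and $\tilde\varphi_n(-n)=z_n\in\Theta$. For each fixed $k\in\mathbb{N}$ and every $n\geq k$, we have $\tilde\varphi_n(-k)\in G(n-k,\Theta)$, so attraction combined with compactness of $\Theta$ yields a subsequence along which $\tilde\varphi_n(-k)\to p_k\in\Theta$. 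By $(K2)$ the function $\tilde\varphi_n(\cdot-k)|_{[0,\infty)}=\varphi_n(\cdot+(n-k))|_{[0,\infty)}$ lies in $\mathcal{R}$ and has initial value converging to $p_k$; applying $(K4)$ along the chosen subsequence produces a further subsequence and some $\gamma^{(k)}\in\mathcal{R}$ with $\tilde\varphi_n(t)\to\gamma^{(k)}(t+k)=:\gamma_k(t)$ for every $t\geq-k$. Iterating in $k$ and nesting subsequences, uniqueness of pointwise limits forces $\gamma_{k+1}|_{[-k,\infty)}=\gamma_k$, so the $\gamma_k$ patch into a single $\gamma:\mathbb{R}\to X$, and a standard diagonal subsequence $\{n_j\}$ satisfies $\tilde\varphi_{n_j}(t)\to\gamma(t)$ for every $t\in\mathbb{R}$.

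It then remains to check that $\gamma\in\mathbb{K}$ with $\gamma(0)=z$. Obviously $\gamma(0)=z$ since $\tilde\varphi_n(0)=z$ for all $n$. For the complete-trajectory property, given $h\in\mathbb{R}$ pick any integer $k\geq-h$; the identity $\gamma^{(k)}=\gamma(\cdot-k)|_{[0,\infty)}\in\mathcal{R}$ together with $(K2)$ applied with the non-negative shift $h+k$ yields $\gamma(\cdot+h)|_{[0,\infty)}\in\mathcal{R}$, so $\gamma$ is a complete trajectory of $\mathcal{R}$. Boundedness of $\gamma$, and in fact the stronger property $\gamma(t)\in\Theta$ for every $t\in\mathbb{R}$, follows by repeating the first-inclusion argument pointwise: $\tilde\varphi_{n_j}(t)\in G(t+n_j,\Theta)$ with $t+n_j\to+\infty$, so attraction and closedness of $\Theta$ give $\gamma(t)\in\Theta$. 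The main technical obstacle is precisely this diagonal construction and the verification that every translate of the limiting $\gamma$ belongs to $\mathcal{R}$; because $(K3)$ is \emph{not} assumed, one cannot concatenate backward-in-time pieces directly, and funnelling the argument through the pointwise convergence supplied by $(K4)$ is essential.
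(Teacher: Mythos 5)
Your proposal is correct and follows essentially the same route as the paper: the forward inclusion via attraction of the bounded orbit $B_\gamma$ plus closedness of $\Theta$, and the backward extension of a point $z\in\Theta$ by shifting solutions supplied by negative semi-invariance, using attraction together with compactness of $\Theta$ to get convergent initial data, applying $\left(  K4\right)  $ at each stage, and gluing the limits with a diagonal subsequence. The only differences are cosmetic (integer indexing and an explicit verification via $\left(  K2\right)  $ that every translate of the limit lies in $\mathcal{R}$, which the paper leaves implicit).
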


\begin{proof}
Let $\gamma\left(  \text{\textperiodcentered}\right)  \in\mathbb{K}$. We note
that $B_{\gamma}=\cup_{s\in\mathbb{R}}\gamma\left(  s\right)  \subset G\left(
t,B_{\gamma}\right)  $ implies (as $B_{\gamma}$ is bounded) that%
\[
dist_{X}(B_{\gamma},\Theta)\leq dist_{X}(G\left(  t,B_{\gamma}\right)
,\Theta)\rightarrow0\text{ as }t\rightarrow+\infty,
\]
so that $B_{\gamma}\subset\Theta$.

Conversely, let $z\in\Theta$. Since $\Theta\subset G\left(  t,\Theta\right)
,$ we have $z\in G\left(  t_{n},\Theta\right)  $ with $t_{n}\rightarrow\infty
$. Hence, $z=u_{n}\left(  t_{n}\right)  $, where $u_{n}\in\mathcal{R}$ and
$u_{n}\left(  0\right)  \in\Theta$. Consider the functions $v_{n}^{0}\left(
\text{\textperiodcentered}\right)  =u_{n}\left(  \text{\textperiodcentered
}+t_{n}\right)  $, which belong to $\mathcal{R}$. In view of $\left(
K4\right)  $ there exist $v^{0}\left(  \text{\textperiodcentered}\right)
\in\mathcal{R}$ with $v^{0}\left(  0\right)  =z$ and a subsequence (denoted
again by $u_{n}$) such that $v_{n}^{0}\left(  t\right)  \rightarrow
v^{0}\left(  t\right)  $ in $X$ for all $t\geq0.$ Since, $v^{0}\left(
t\right)  =\lim_{n\rightarrow\infty}u^{n}\left(  t+t_{n}\right)  $, we obtain
that $v^{0}\left(  t\right)  \in\Theta$ for all $t\geq0$. Let us take a
sequence $t_{j}\rightarrow+\infty$ such that $t_{0}=0<t_{j}<t_{j+1}$ for any
$j\in\mathbb{N}$. Consider now the sequence of functions $v_{n}^{1}\left(
\text{\textperiodcentered}\right)  =u_{n}\left(  \text{\textperiodcentered
}+t_{n}-t_{1}\right)  $, which belong to $\mathcal{R}$. By (\ref{Attraction})
it is clear that (up to a subsequence) $v_{n}^{1}\left(  0\right)  $ is
convergent in $X$. As before there exist then $v^{1}\left(
\text{\textperiodcentered}\right)  \in\mathcal{R}$ and a subsequence (denoted
again by $v_{n}$) such that $v_{n}^{1}\left(  t\right)  \rightarrow
v^{1}\left(  t\right)  $ in $X$ for all $t\geq0.$ Also, $v^{1}\left(
t\right)  \in\Theta$ and $v^{1}\left(  t+t_{1}\right)  =v^{0}\left(  t\right)
$ for all $t\geq0$. In this way we can define inductively a sequence
$v^{j}\left(  \text{\textperiodcentered}\right)  \in\mathcal{R}$ such that
$v^{j}\left(  t\right)  \in\Theta$ and $v^{j}\left(  t+t_{j}-t_{j-1}\right)
=v^{j-1}\left(  t\right)  $ for all $t\geq0$ and $j\in\mathbb{N}$. We define
the function $v\left(  t\right)  $ by taking for all $t\in\mathbb{R}$ the
commom value at $t$ of the functions $v^{j}\left(  \text{\textperiodcentered
}\right)  $. Namely, for any $j$ such that $t\geq-t_{j}$ we put%
\[
v\left(  t\right)  =v^{j}\left(  t+t_{j}\right)  \text{.}%
\]
Then $v\left(  \text{\textperiodcentered}\right)  $ is a complete trajectory
of $\mathcal{R}$, $v\left(  0\right)  =z$ and $v\left(  t\right)  \in\Theta$
for all $t\in\mathbb{R}$. Hence, $v\left(  \text{\textperiodcentered}\right)
\in\mathbb{K}$.

The first equality is proved. The second one is obvious from the definition of
a complete trajectory.
\end{proof}

\bigskip

The last theorem is also true if we replace $\left(  K4\right)  $ by $\left(
K3\right)  .$

\begin{theorem}
\label{CaractAttrAbs2}Assume that $\left(  K1\right)  -\left(  K3\right)  $
hold and that $G$ possesses a compact global attractor $\Theta.$ Then%
\begin{equation}
\Theta=\{\gamma\left(  0\right)  :\gamma\left(  \text{\textperiodcentered
}\right)  \in\mathbb{K}\}=\cup_{t\in\mathbb{R}}\{\gamma\left(  t\right)
:\gamma\left(  \text{\textperiodcentered}\right)  \in\mathbb{K}\}.\nonumber
\end{equation}

\end{theorem}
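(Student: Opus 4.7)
The plan is to mirror the proof of Theorem \ref{CaractAttrAbs}, but to replace the limiting step based on $(K4)$ by an explicit finite concatenation based on $(K3)$. The first inclusion $\{\gamma(0):\gamma\in\mathbb{K}\}\subset\Theta$ is obtained exactly as in the first paragraph of the proof of Theorem \ref{CaractAttrAbs}: for $\gamma\in\mathbb{K}$ the set $B_\gamma=\bigcup_{s\in\mathbb{R}}\gamma(s)$ is bounded, the inclusion $B_\gamma\subset G(t,B_\gamma)$ follows from the easy direction of Lemma \ref{CompleteTrajEquiv} (which needs only $(K1)$--$(K2)$), and the attraction property of $\Theta$ then forces $B_\gamma\subset\Theta$.

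For the reverse inclusion, I would fix $z\in\Theta$ and build a bounded complete trajectory $\gamma$ with $\gamma(0)=z$ by pasting together infinitely many one-unit-long pieces in $\mathcal{R}$. Using $\Theta\subset G(1,\Theta)$ iteratively, choose $z_0=z,\ z_{-1},\ z_{-2},\dots$ in $\Theta$ together with $\varphi_n\in\mathcal{R}$ such that $\varphi_n(0)=z_{-n}$ and $\varphi_n(1)=z_{-n+1}$. By $(K1)$ also pick $u_0\in\mathcal{R}$ with $u_0(0)=z$. Define $\gamma(t)=u_0(t)$ for $t\geq0$ and $\gamma(t)=\varphi_n(t+n)$ for $t\in[-n,-n+1]$, $n\geq1$; continuity at each junction $t=-n+1$ follows from $\varphi_n(1)=z_{-n+1}=\varphi_{n-1}(0)$ (with the convention $\varphi_0=u_0$).

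The heart of the argument is to verify $\gamma(\cdot+\tau)|_{[0,\infty)}\in\mathcal{R}$ for every $\tau\in\mathbb{R}$. For $\tau\geq0$ this is immediate from $(K2)$ applied to $u_0$. For $\tau=-N+\sigma$ with $N\in\mathbb{N}$ and $\sigma\in[0,1)$, one first applies $(K2)$ to form $\varphi_N(\cdot+\sigma)\in\mathcal{R}$, and then applies $(K3)$ iteratively to concatenate this with $\varphi_{N-1},\dots,\varphi_1$ at the successive times $1-\sigma,\ 2-\sigma,\dots,N-\sigma$, and finally with $u_0$ at time $N-\sigma=-\tau$; the gluing values align by construction as $z_{-N+1},z_{-N+2},\dots,z_0=z$. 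The resulting $(N+1)$-fold concatenation lies in $\mathcal{R}$ and coincides piecewise with $\gamma(\cdot+\tau)|_{[0,\infty)}$. Boundedness of $\gamma$ follows because $u_0([0,\infty))$ is bounded ($\{z\}$ is bounded and attracted to $\Theta$, combined with continuity of $u_0$), while for $t\leq0$ one has $\gamma(t)\in\bigcup_{s\in[0,1]}G(s,\Theta)$, which is bounded by the natural local boundedness of the semiflow issuing from the compact set $\Theta$. Hence $\gamma\in\mathbb{K}$, and the second equality is immediate from the definition of a complete trajectory.

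The main technical nuisance is the bookkeeping of the concatenation in the case $\tau=-N+\sigma$: the fractional $(K2)$-translation by $\sigma$ has to interlock with the integer-indexed $(K3)$-concatenations, and one must check at each of the $N+1$ gluings that the compatibility $\varphi_2(0)=\varphi_1(s)$ required by $(K3)$ is met. A secondary point is the boundedness of the backward orbit, where one relies on the fact that the orbit of the compact set $\Theta$ on the bounded time interval $[0,1]$ stays bounded, a mild local boundedness property that is standard in the dissipative setting under consideration.
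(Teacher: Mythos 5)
Your construction is essentially the paper's: the forward inclusion is identical, and for the reverse inclusion both you and the paper build the backward extension by repeatedly using the negative semi-invariance $\Theta\subset G(t,\Theta)$ and gluing the resulting pieces of $\mathcal{R}$ with $(K3)$. The paper uses an arbitrary sequence $t_{j}\rightarrow+\infty$ where you use unit steps, and it records the shifted restrictions through a nested family $\psi^{j}$ rather than through your explicit $(N+1)$-fold concatenation at $\tau=-N+\sigma$; these are cosmetic differences, and your bookkeeping of the gluing conditions $\varphi_{n}(1)=z_{-n+1}=\varphi_{n-1}(0)$ is correct.

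The one place where your argument does not close is the boundedness of the backward orbit. You justify the boundedness of $\bigcup_{s\in[0,1]}G(s,\Theta)$ by appealing to a ``local boundedness of the semiflow issuing from a compact set''. No such property is among the hypotheses: the theorem concerns an abstract family $\mathcal{R}\subset C(\mathbb{R}^{+};X)$ satisfying only $(K1)$--$(K3)$ together with the existence of a compact global attractor, and a priori nothing prevents $G(s,\Theta)$ from being unbounded for some fixed $s\in(0,1)$. The paper fills this in by first observing that $(K3)$ makes $G$ strict (Lemma \ref{MS}), whence
\[
G(t,\Theta)\subset G\left(  t,G\left(  \tau,\Theta\right)  \right)
=G(t+\tau,\Theta)
\]
together with the attraction property forces $G(t,\Theta)\subset\Theta$ for every $t\geq0$, i.e.\ $\Theta$ is strictly invariant. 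With that observation every piece of your trajectory, namely $u_{0}(t)\in G(t,\Theta)$ for $t\geq0$ and $\varphi_{n}([0,1])\subset\bigcup_{s\in[0,1]}G(s,\Theta)$ for the backward part, lies in $\Theta$ itself, so $\gamma(\mathbb{R})\subset\Theta$ and boundedness is automatic; your separate forward-time argument via attraction plus continuity on compact intervals then becomes unnecessary. With this single repair your proof is complete and coincides in substance with the paper's.
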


\begin{proof}
As in the proof of Theorem \ref{CaractAttrAbs} we obtain that $B_{\gamma}%
=\cup_{t\in\mathbb{R}}\gamma\left(  t\right)  \subset\Theta$ for any
$\gamma\left(  \text{\textperiodcentered}\right)  \in\mathbb{K}$.

We note that Lemma \ref{MS} implies that $G$ is strict, and then by%
\[
G\left(  t,\Theta\right)  \subset G\left(  t,G\left(  \tau,\Theta\right)
\right)  =G(t+\tau,\Theta)\rightarrow\Theta\text{ as }\tau\rightarrow+\infty
\]
we have that $G\left(  t,\Theta\right)  \subset\Theta$ for any $t\geq0$, so
that $\Theta$ is strictly invariant.

We take an arbitrary $z\in\Theta$. We take $\psi^{0}\left(
\text{\textperiodcentered}\right)  \in\mathcal{R}$ such that $\psi^{0}\left(
0\right)  =z$. Since $\Theta$ is strictly invariant, we have $\psi^{0}\left(
t\right)  \in G(t,z)\subset\Theta$ for all $t\geq0.$ Let us take a sequence
$t_{j}\rightarrow+\infty$ such that $t_{0}=0<t_{j}<t_{j+1}$ for any
$j\in\mathbb{N}$. From $z\in G\left(  t_{1},\Theta\right)  $ there exists
$z_{1}\in\Theta$ and $\varphi^{1}\left(  \text{\textperiodcentered}\right)
\in\mathcal{R}$ such that $z=\varphi^{1}\left(  t_{1}\right)  $ and
$\varphi^{1}\left(  0\right)  =z_{1}$. By $\left(  K3\right)  $ we can
concatenate $\varphi^{1}$ and $\varphi^{0}$ and obtain a $\psi^{1}\left(
\text{\textperiodcentered}\right)  \in\mathcal{R}$ satisfying $\psi^{1}\left(
t_{1}\right)  =z$, $\psi^{1}\left(  t\right)  \in\Theta$, for all $t\geq0$,
and $\psi^{1}\left(  t+t_{1}\right)  =\psi^{0}\left(  t\right)  $, for all
$t\geq0$. Inductively, we can define for any $j\geq1$ a $\psi^{j}\left(
\text{\textperiodcentered}\right)  \in\mathcal{R}$ satisfying $\psi^{j}\left(
t+t_{j}-t_{j-1}\right)  =\psi^{j-1}\left(  t\right)  $ and $\psi^{j}\left(
t\right)  \in\Theta$. For any $t\in\mathbb{R}$ let $\psi\left(  t\right)  $ be
the common value of $\psi^{j}\left(  t+t_{j}\right)  $ for $t\geq-t_{j}$. Then
$\psi\left(  \text{\textperiodcentered}\right)  \in\mathbb{K}$ and
$\psi\left(  0\right)  =z$.

The second one is obvious from the definition of a complete trajectory.
\end{proof}

\bigskip

\begin{remark}
The map $\gamma:\mathbb{R}\rightarrow X$ is a complete trajectory of $G$\ if
(\ref{PropCompleteTraj}) holds. Let $\mathbb{K}_{G}$ be the set of all bounded
complete trajectories of $G$. If $\left(  K1\right)  -\left(  K4\right)  $
hold, then by Lemma \ref{CompleteTrajEquiv} we have $\mathbb{K}_{G}%
=\mathbb{K}$ and equality (\ref{EqAttrComplete}) is the same as
\begin{equation}
\Theta=\{\gamma\left(  0\right)  :\gamma\left(  \text{\textperiodcentered
}\right)  \in\mathbb{K}_{G}\}=\cup_{t\in\mathbb{R}}\{\gamma\left(  t\right)
:\gamma\left(  \text{\textperiodcentered}\right)  \in\mathbb{K}_{G}\}.
\label{EqAttrCompleteG}%
\end{equation}

If either $\left(  K3\right)  $ or $\left(  K4\right)  $ fails to be true,
then we can say only that $\mathbb{K}\subset\mathbb{K}_{G}$. Nevertheless, if
either $\left(  K3\right)  $ or $\left(  K4\right)  $ holds, then
(\ref{EqAttrCompleteG}) is still true. Indeed, Theorems \ref{CaractAttrAbs},
\ref{CaractAttrAbs2} imply that%
\[
\Theta=\{\gamma\left(  0\right)  :\gamma\left(  \text{\textperiodcentered
}\right)  \in\mathbb{K}\}\subset\{\gamma\left(  0\right)  :\gamma\left(
\text{\textperiodcentered}\right)  \in\mathbb{K}_{G}\}
\]
and as in the proof of Theorem \ref{CaractAttrAbs} we obtain that $B_{\gamma
}=\cup_{t\in\mathbb{R}}\gamma\left(  t\right)  \subset\Theta$ for any
$\gamma\left(  \text{\textperiodcentered}\right)  \in\mathbb{K}_{G}$, so that
(\ref{EqAttrCompleteG}) holds.

If both $\left(  K3\right)  $ and $\left(  K4\right)  $ fail, then equality
(\ref{EqAttrCompleteG}) can be obtained under some assumptions on $G$. Namely,
in \cite[Lemmas 2.25 and 2.27]{KMVY} it is shown that (\ref{EqAttrCompleteG})
holds if either $G$ is strict or the following condition is true: for any
sequence $\varphi^{n}:\mathbb{R}^{+}\rightarrow X$ satisfying
(\ref{PropCompleteTraj}), for $s,t\geq0,$ and $\varphi^{n}\left(  0\right)
\rightarrow\varphi_{0}$ in $X$, there exists a subsequence and $\varphi
:\mathbb{R}^{+}\rightarrow X$ satisfying (\ref{PropCompleteTraj}) for
$s,t\geq0$ such that
\[
\varphi^{n_{k}}\left(  t\right)  \rightarrow\varphi\left(  t\right)  \text{
for any }t\geq0.
\]

\end{remark}

\bigskip

We shall apply these results to the set $K^{+}$ generated by the weak
solutions of (\ref{1}). We note that in view of Lemmas 3 and 15 in
\cite{KapustValero06} (see also \cite[Theorems 3.11 and 3.18]{KMVY})
assumptions $\left(  K1\right)  -\left(  K4\right)  $ are satisfied for
$K^{+}$. Moreover, the sets of stationary points $\mathfrak{R}$ of problem
(\ref{1}) coincides with the set $\mathfrak{R}_{\mathcal{R}}=\mathfrak{R}%
_{K^{+}}$ given in Definition \ref{DefFixed}.

\begin{lemma}
\label{EqFixed}Let (\ref{2}) hold. Then $\mathfrak{R}=\mathfrak{R}_{K^{+}}.$
\end{lemma}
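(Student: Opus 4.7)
The statement amounts to a routine verification that constant maps satisfying the elliptic equation are exactly the constant weak solutions. My plan is to prove the two inclusions separately, using only the definition of weak solution (equation (\ref{3})), the growth condition on $f$, and the Sobolev embedding $H_0^1(\Omega)\hookrightarrow L^6(\Omega)\hookrightarrow L^4(\Omega)$ valid in dimension $3$.

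For the inclusion $\mathfrak{R}\subset\mathfrak{R}_{K^+}$: take $z\in\mathfrak{R}$, so $z\in H_0^1(\Omega)$ satisfies $-\Delta z+f(z)=h$ in $H^{-1}(\Omega)$. Define $\varphi(t)\equiv z$. Since $z\in H_0^1(\Omega)\hookrightarrow L^4(\Omega)$, the constant function $\varphi$ lies in $L^2_{loc}(0,+\infty;H_0^1(\Omega))\cap L^4_{loc}(0,+\infty;L^4(\Omega))$. By the growth assumption in (\ref{2}), $|f(z)|\leq C_1(1+|z|^3)$ gives $f(z)\in L^{4/3}(\Omega)\subset H^{-1}(\Omega)$, so the stationary equation makes sense. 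Substituting $\varphi$ into (\ref{3}), the term $-\int_0^T(z,v)\eta_t\,dt$ vanishes because $\eta\in C_0^\infty(0,T)$ has compact support, leaving
\[
\Bigl((z,v)_{H_0^1(\Omega)}+(f(z),v)-(h,v)\Bigr)\int_0^T\eta\,dt=0,
\]
which holds for every $v\in H_0^1(\Omega)$ and every $\eta$ thanks to the stationary equation. Hence $\varphi\in K^+$ and $z\in\mathfrak{R}_{K^+}$.

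For the reverse inclusion $\mathfrak{R}_{K^+}\subset\mathfrak{R}$: take $z\in\mathfrak{R}_{K^+}$, so $\varphi(t)\equiv z$ is a weak solution of (\ref{1}). The regularity requirement $\varphi\in L^2_{loc}(0,+\infty;H_0^1(\Omega))$ together with the constancy of $\varphi$ forces $z\in H_0^1(\Omega)$, and then $z\in L^4(\Omega)$ by Sobolev embedding, so $f(z)\in L^{4/3}(\Omega)\subset H^{-1}(\Omega)$. Writing (\ref{3}) for $\varphi$ and again using that $\int_0^T(z,v)\eta_t\,dt=0$ for $\eta\in C_0^\infty(0,T)$, I obtain
\[
\Bigl((z,v)_{H_0^1(\Omega)}+(f(z),v)-(h,v)\Bigr)\int_0^T\eta\,dt=0
\]
for all $v\in H_0^1(\Omega)$ and all $\eta\in C_0^\infty(0,T)$. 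Choosing any $\eta$ with $\int_0^T\eta\,dt\neq0$ yields $(z,v)_{H_0^1(\Omega)}+(f(z),v)-(h,v)=0$ for every $v\in H_0^1(\Omega)$, which is precisely $-\Delta z+f(z)=h$ in $H^{-1}(\Omega)$. Therefore $z\in\mathfrak{R}$.

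There is no real obstacle; the only point requiring a brief word is the interplay between the functional framework for $\mathfrak{R}$ (the elliptic equation in $H^{-1}(\Omega)$) and the definition of $\mathfrak{R}_{K^+}$ (constant weak solutions in the sense of (\ref{3})). The dimension-three Sobolev embedding $H_0^1(\Omega)\hookrightarrow L^4(\Omega)$ makes both formulations compatible and ensures that $f(z)$ is a legitimate element of $H^{-1}(\Omega)$ in both directions.
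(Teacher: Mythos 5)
Your proof is correct and follows the same route as the paper, which simply observes that the constant trajectory $u(t)\equiv z$ is a weak solution in the sense of (\ref{3}) if and only if the stationary equation (\ref{Stationary}) holds; the paper states both directions as essentially obvious, while you supply the (correct) details about the vanishing of the $\eta_t$ term, the choice of $\eta$ with nonzero integral, and the Sobolev embeddings making $f(z)$ a legitimate element of $H^{-1}(\Omega)$.
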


\begin{proof}
Let $u_{0}\in\mathfrak{R}_{K^{+}}$. Then $u\left(  t\right)  \equiv u_{0}$
belongs to $K^{+}$. Therefore, $u\left(  \text{\textperiodcentered}\right)  $
satisfies (\ref{3}), so that (\ref{Stationary}) holds. Conversely, let
$v\in\mathfrak{R}$. Then it is obvious that $v\left(  t\right)  \equiv v_{0}$
is a weak solution, so that is belongs to $K^{+}$.
\end{proof}

\bigskip

Then Lemmas \ref{FixedPointsCar}, \ref{CompleteTrajEquiv} and Theorem
\ref{CaractAttrAbs} imply the following result.

\begin{theorem}
\label{PropK+}Let (\ref{2}) hold. Then the set of weak solutions $K^{+}$ of
(\ref{1}) satisfies:

\begin{enumerate}
\item $z\in\mathfrak{R}$ if and only if $z\in G\left(  t,z\right)  $ for all
$t\geq0.$

\item The map $\gamma:\mathbb{R}\rightarrow L^{2}\left(  \Omega\right)  $ is a
complete trajectory of $K^{+}$ if and only if (\ref{PropCompleteTraj}) holds.

\item The compact global attractor $\Theta$ of $G$ satisfies (\ref{6}).
\end{enumerate}
\end{theorem}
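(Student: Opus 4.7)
The plan is to reduce the statement to the abstract machinery already developed in this section by setting $\mathcal{R}=K^{+}$ and verifying conditions $(K1)$--$(K4)$; once that is done, items 1, 2 and 3 follow immediately from Lemma \ref{FixedPointsCar}, Lemma \ref{CompleteTrajEquiv} and Theorem \ref{CaractAttrAbs}, respectively, together with Lemma \ref{EqFixed} and Theorem \ref{teor1}.

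First I would verify $(K1)$--$(K4)$ for $\mathcal{R}=K^{+}$. Condition $(K1)$ is the existence result recalled right after \eqref{4}: for every $u_{0}\in L^{2}(\Omega)$ there is at least one weak solution of \eqref{1} starting at $u_{0}$. Condition $(K2)$ is immediate, since a time-translate of a weak solution on $(0,+\infty)$ is again a weak solution on $(0,+\infty)$. Conditions $(K3)$ and $(K4)$ are, respectively, the concatenation property of weak solutions and the sequential compactness of $K^{+}$ with respect to pointwise convergence in $L^{2}(\Omega)$ when initial data converge; both are proved in Lemmas 3 and 15 of \cite{KapustValero06} (and re-derived in \cite[Theorems 3.11 and 3.18]{KMVY}) using the standard a priori estimates coming from \eqref{2} and \eqref{4}. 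The paper itself already points this out just before the statement, so at this point one only needs to cite those lemmas.

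Next I would put the three items together. For item 1, Lemma \ref{FixedPointsCar} gives $z\in G(t,z)$ for all $t\ge 0$ iff $z\in\mathfrak{R}_{K^{+}}$, and Lemma \ref{EqFixed} identifies $\mathfrak{R}_{K^{+}}$ with $\mathfrak{R}$, yielding the required equivalence. For item 2, Lemma \ref{CompleteTrajEquiv} applied to $\mathcal{R}=K^{+}$ says exactly that $\gamma\colon\mathbb{R}\to L^{2}(\Omega)$ is a complete trajectory of $K^{+}$ in the sense of Definition \ref{defi:traject} if and only if \eqref{PropCompleteTraj} holds. For item 3, Theorem \ref{teor1} provides a compact global attractor $\Theta$ for the m-semiflow $G$, and Theorem \ref{CaractAttrAbs} then produces the representation \eqref{6} of $\Theta$ as the union (over all $t\in\mathbb{R}$) of points of bounded complete trajectories.

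The only genuine work is the verification of $(K3)$ and $(K4)$; in this paper they are black-boxed by reference, so the proof becomes essentially a three-line citation chain. If I had to prove $(K4)$ from scratch, the key step would be the standard energy estimate derived from \eqref{4}, which gives uniform bounds of $\varphi^{n}$ in $L^{\infty}_{loc}(0,+\infty;L^{2}(\Omega))\cap L^{2}_{loc}(0,+\infty;H^{1}_{0}(\Omega))\cap L^{4}_{loc}(0,+\infty;L^{4}(\Omega))$, together with a bound on $\varphi^{n}_{t}$ in an appropriate dual space; the Aubin--Lions compactness lemma then allows extraction of a subsequence converging a.e. and strongly in $L^{2}(0,T;L^{2}(\Omega))$, and the growth hypothesis on $f$ in \eqref{2} permits passage to the limit in the nonlinearity. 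That is the only potentially delicate point, but it is exactly what is proved in the cited references.
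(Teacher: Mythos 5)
Your proposal is correct and matches the paper's own argument exactly: the paper likewise notes that $(K1)$--$(K4)$ hold for $K^{+}$ by citation to Lemmas 3 and 15 of \cite{KapustValero06} (and \cite[Theorems 3.11 and 3.18]{KMVY}), identifies $\mathfrak{R}=\mathfrak{R}_{K^{+}}$ via Lemma \ref{EqFixed}, and then derives the three items from Lemma \ref{FixedPointsCar}, Lemma \ref{CompleteTrajEquiv} and Theorem \ref{CaractAttrAbs} together with Theorem \ref{teor1}. Your sketch of how $(K4)$ would be established from scratch (energy estimates from (\ref{4}) plus Aubin--Lions compactness and passage to the limit in the nonlinearity) is also the standard route taken in the cited references.
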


\section{Structure of the global attractor for weak solutions\label{StrWeak}}

In this section we will study the structure of the global attractor generated
by weak solutions of equation (\ref{1}).

First, let us prove some regularity properties of the stationary points.

\begin{lemma}
\label{lem:1} Under conditions (\ref{2}) the set $\mathfrak{R}$ of solutions
of the problem
\begin{equation}
\left\{
\begin{array}
[c]{l}%
-\Delta u+f(u)=h,\quad x\in\Omega,\\
u|_{\partial\Omega}=0,
\end{array}
\right.  \label{9}%
\end{equation}
is nonempty, compact in $L^{2}(\Omega)$, and bounded in $H_{0}^{1}(\Omega)\cap
H^{2}\left(  \Omega\right)  $.
\end{lemma}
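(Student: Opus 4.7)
My plan is to handle the three claims separately, with the $H_0^1\cap H^2$ a priori estimate being the technical heart and serving double duty in both the existence step and the compactness step.

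For nonemptiness I would appeal to standard pseudomonotone operator theory. The operator $\mathcal{F}u := -\Delta u + f(u) - h$ maps $H_0^1(\Omega)$ continuously into $H^{-1}(\Omega)$: the growth bound $|f(u)|\le C_1(1+|u|^3)$ combined with the Sobolev embedding $H_0^1(\Omega)\hookrightarrow L^6(\Omega)$, available in dimension $3$, makes the Nemytskii map $u\mapsto f(u)$ continuous and bounded from $H_0^1(\Omega)$ into $L^2(\Omega)$, and the compactness of that embedding makes it a compact (hence pseudomonotone) perturbation of $-\Delta$. Coercivity follows from the dissipativity $f(u)u\ge\alpha|u|^4-C_2$ and Poincar\'e's inequality, so Browder's surjectivity theorem for coercive pseudomonotone operators produces a solution. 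Equivalently, one could minimize $J(u)=\tfrac{1}{2}\|u\|_{H_0^1(\Omega)}^2+\int_\Omega F(u)\,dx-(h,u)$, which is coercive by (\ref{PropF}) and weakly lower semicontinuous via Fatou applied to $F(u)+D_2\ge0$; or use a Galerkin construction with Brouwer's fixed point theorem at each finite-dimensional level.

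For the a priori bound I would pair (\ref{Stationary}) with $u$ in $L^2(\Omega)$ to get
\[
\|u\|_{H_0^1(\Omega)}^2+(f(u),u)=(h,u),
\]
and use $f(u)u\ge\alpha|u|^4-C_2$ together with Young's and Poincar\'e's inequalities to obtain a uniform $H_0^1$-bound on $\mathfrak{R}$. Sobolev embedding then gives a uniform $L^6$-bound, hence $\|f(u)\|_{L^2}^2\le 2C_1^2(|\Omega|+\|u\|_{L^6(\Omega)}^6)$ is uniformly bounded; rewriting the equation as $-\Delta u = h - f(u)$ and invoking elliptic regularity for the Dirichlet Laplacian on the smooth domain $\Omega$ yields the desired uniform bound in $H^2(\Omega)\cap H_0^1(\Omega)$.

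For $L^2$-compactness, the $H_0^1$-bound combined with the compact embedding $H_0^1(\Omega)\hookrightarrow L^2(\Omega)$ immediately gives precompactness. To show $\mathfrak{R}$ is closed in $L^2(\Omega)$, take $u_n\in\mathfrak{R}$ with $u_n\to u$ in $L^2$ and pass to a subsequence that also converges weakly in $H_0^1$, strongly in $L^p$ for any $p<6$, and a.e.\ on $\Omega$, so that $f(u_n)\to f(u)$ a.e. The uniform $L^2$-bound on $f(u_n)$ together with a.e.\ convergence and Vitali's theorem (or a dominated-convergence argument based on the uniform $L^6$-control) gives $f(u_n)\to f(u)$ strongly in $L^{4/3}(\Omega)\hookrightarrow H^{-1}(\Omega)$, and passing to the limit in (\ref{Stationary}) yields $u\in\mathfrak{R}$. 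The main obstacle is the nonemptiness step, since the mere continuity of $f$ rules out implicit-function or contraction-based approaches and forces one to invoke the pseudomonotone/variational machinery; every other step is a routine energy estimate, elliptic regularity, or weak-to-a.e.\ passage to the limit.
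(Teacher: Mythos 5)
Your proposal is correct and follows essentially the same route as the paper: existence via the surjectivity theorem for coercive, bounded pseudomonotone operators (the paper cites Brezis's theorem, noting that $f:H_0^1(\Omega)\to H^{-1}(\Omega)$ is strongly continuous so that $L=-\Delta+f$ is pseudomonotone), the $H_0^1$-bound from coercivity, the $H^2$-bound from the equation together with the embedding $H_0^1(\Omega)\subset L^6(\Omega)$, and $L^2$-compactness from weak compactness in $H_0^1(\Omega)$ (your explicit closedness argument via a.e.\ convergence and passage to the limit in the equation is just the detail the paper leaves implicit). No gaps.
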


\begin{proof}
Due to $f(u)u\geq-C_{2},$ for all $u\in\mathbb{R}$, the operator $L=-\Delta
u+f(u):H_{0}^{1}(\Omega)\rightarrow H^{-1}(\Omega)$ is coercive,
$-\Delta:H_{0}^{1}(\Omega)\rightarrow H^{-1}(\Omega)$ is monotone and
continuous. Also, from (\ref{2}) we can obtain that $f:H_{0}^{1}%
(\Omega)\rightarrow H^{-1}(\Omega)$ is strongly continuous (i.e.
$u_{n}\rightarrow u$ weakly in $H_{0}^{1}\left(  \Omega\right)  $ implies
$f\left(  u_{n}\right)  \rightarrow f\left(  u\right)  $ in $H^{-1}\left(
\Omega\right)  $). Hence, $L$ is pseudomonotone, coercive and bounded, so that
a classical theorem of Brezis (see \cite{Zeidler}) implies that $\mathfrak{R}%
\neq\emptyset$. It is clear also that it is weakly compact in $H_{0}%
^{1}(\Omega)$ and therefore compact in $L^{2}(\Omega)$. We remark that
$\mathfrak{R}$ is bounded in $H_{0}^{1}(\Omega)$, as $L:H_{0}^{1}%
(\Omega)\rightarrow H^{-1}(\Omega)$ is coercive. Hence, the equality%
\[
-\Delta u+f\left(  u\right)  =h
\]
and (\ref{2}), together with the continuous imbedding $H_{0}^{1}\left(
\Omega\right)  \subset L^{6}\left(  \Omega\right)  ,$ imply
\[
\left\Vert \Delta u\right\Vert ^{2}\leq C\text{ for all }u\in\mathfrak{R}%
\text{.}%
\]
Thus, $\mathfrak{R}$ is bounded in $H_{0}^{1}(\Omega)\cap H^{2}\left(
\Omega\right)  $.
\end{proof}

\bigskip

For initial data in $H_{0}^{1}\left(  \Omega\right)  $ we shall obtain the
existence of more regular solutions for (\ref{1}).

\begin{lemma}
\label{lem:4}Assume that (\ref{2}) holds. Let $\,u_{0}\in H_{0}^{1}(\Omega).$
Then there exists at least one weak solution $u$ of (\ref{1}) such that
$u(0)=u_{0}$, $u\in C([0,+\infty);H_{0}^{1}(\Omega))$ and
\begin{equation}
\Vert u(t)\Vert_{H_{0}^{1}(\Omega)}^{2}\leq C\left(  1+\Vert u_{0}\Vert
_{H_{0}^{1}(\Omega)}^{4}\right)  ,\ \forall t\geq0, \label{13}%
\end{equation}%
\begin{equation}
\int\limits_{0}^{+\infty}\Vert u_{t}(s)\Vert^{2}ds\leq C\left(  1+\Vert
u_{0}\Vert_{H_{0}^{1}(\Omega)}^{4}\right)  , \label{14}%
\end{equation}
for some $C>0$.
\end{lemma}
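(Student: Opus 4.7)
The plan is to construct $u$ as the limit of a Galerkin scheme and to derive both bounds (\ref{13}) and (\ref{14}) from the Lyapunov structure of the equation; the $H_0^1$ continuity will come at the end from an energy-method argument.

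First I would take the orthonormal $L^2$-basis $\{w_j\}$ of eigenfunctions of $A=-\Delta$ with Dirichlet data and look for $u^n(t)=\sum_{j=1}^n c_j^n(t)w_j$ solving the finite-dimensional ODE obtained by projecting (\ref{1}) onto $V_n=\operatorname{span}\{w_1,\dots,w_n\}$, with initial datum $P_n u_0 \to u_0$ in $H_0^1(\Omega)$. Peano's theorem yields local solutions since $f$ is only continuous; the a priori estimates below extend them to $[0,+\infty)$. The central tool is the Lyapunov functional
\[
E(v):=\tfrac12\|\nabla v\|^{2}+\int_{\Omega}F(v)\,dx-(h,v).
\]
Testing the Galerkin equation with $u^n_t\in V_n$ gives the exact identity
\[
\tfrac{d}{dt}E(u^n(t))=-\|u^n_t(t)\|^{2},
\]
so $E(u^n(\cdot))$ is nonincreasing and $\int_0^t\|u^n_s\|^2\,ds=E(u^n(0))-E(u^n(t))$. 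Using $F(u)\ge\delta u^{4}-D_{2}$ and absorbing $(h,v)$ via Young's inequality together with Poincar\'e, I obtain a coercive lower bound $E(v)\ge\tfrac14\|\nabla v\|^{2}-C$, while $|F(u)|\le D_{1}(1+|u|^{4})$ and the three-dimensional embedding $H_0^1\hookrightarrow L^{4}$ yield $E(u_0)\le C(1+\|u_0\|_{H_0^1}^{4})$. Combining these gives the Galerkin analogues of (\ref{13}) and (\ref{14}) uniformly in $n$.

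Passing to the limit is then routine: extract $u^n\overset{*}{\rightharpoonup}u$ in $L^\infty(0,T;H_0^1(\Omega))$ and $u^n_t\rightharpoonup u_t$ in $L^2(0,T;L^2(\Omega))$, and by Aubin--Lions also $u^n\to u$ strongly in $C([0,T];L^2)$ and a.e.\ on $(0,T)\times\Omega$ up to a further subsequence. Because $u^n$ is bounded in $L^\infty(0,T;L^{6})$ and $|f(u)|\le C_{1}(1+|u|^3)$, the sequence $f(u^n)$ is equi-integrable with a.e.\ limit $f(u)$, giving weak convergence in $L^2((0,T)\times\Omega)$. Hence $u$ is a weak solution of (\ref{1}) with $u(0)=u_0$, and the bounds (\ref{13}), (\ref{14}) transfer to $u$ by weak lower semicontinuity of the relevant norms.

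The remaining and most delicate point is $u\in C([0,+\infty);H_0^1(\Omega))$, since $u\in L^\infty(H_0^1)$ with $u_t\in L^2(L^2)$ alone yields only weak continuity. The plan is to use the standard energy method: letting $n\to\infty$ in the Galerkin energy identity gives the energy inequality $E(u(t))+\int_0^t\|u_s\|^2\,ds\le E(u_0)$, and by translation, $E(u(t))+\int_{t_0}^t\|u_s\|^2\,ds\le E(u(t_0))$, whence $\limsup_{t\to t_0}E(u(t))\le E(u(t_0))$. On the other hand, $u\in C([0,\infty);L^2)$ combined with the $H_0^1$-bound gives $u(t_n)\rightharpoonup u(t_0)$ in $H_0^1$ along any sequence $t_n\to t_0$; the compact embedding $H_0^1\hookrightarrow L^{q}$ for $q<6$ makes $\int F(u(t_n))\to\int F(u(t_0))$ and $(h,u(t_n))\to (h,u(t_0))$, and weak lower semicontinuity of $\|\nabla\cdot\|$ yields $\liminf E(u(t_n))\ge E(u(t_0))$. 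Combined with the limsup bound this forces $E(u(t_n))\to E(u(t_0))$ and hence $\|\nabla u(t_n)\|\to\|\nabla u(t_0)\|$, which together with weak convergence upgrades to strong convergence in $H_0^1$. This last step---coupling the Lyapunov structure with the compact Sobolev embedding in order to convert a one-sided energy inequality into two-sided control of $\|\nabla u(t)\|$---is where I expect the principal difficulty to lie.
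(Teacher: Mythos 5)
Your construction of the solution and of the bounds (\ref{13})--(\ref{14}) is essentially the paper's own argument: the same Galerkin scheme in the eigenfunction basis, the same identity obtained by testing with $u^n_t$, the same use of (\ref{PropF}) to make the energy functional coercive and to bound it at $t=0$ by $C(1+\Vert u_0\Vert_{H_0^1(\Omega)}^4)$, and the same passage to the limit by weak lower semicontinuity. That part is correct.

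The gap is in the final step, the continuity $u\in C([0,+\infty);H_0^1(\Omega))$. The energy inequality $E(u(t))+\int_{t_0}^{t}\Vert u_s\Vert^2\,ds\leq E(u(t_0))$ holds only for $t\geq t_0$; for $t<t_0$ it reads $E(u(t))\geq E(u(t_0))$, so your claim ``$\limsup_{t\to t_0}E(u(t))\leq E(u(t_0))$'' is justified only as $t\to t_0^{+}$. Combined with the $\liminf$ bound from weak convergence, this pins down $E(u(t_0^{+}))=E(u(t_0))$ but leaves open $E(u(t_0^{-}))>E(u(t_0))$: a nonincreasing, lower semicontinuous function can jump down from the left. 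Thus your argument proves only \emph{right}-continuity of $t\mapsto\Vert u(t)\Vert_{H_0^1(\Omega)}$. (A secondary point: passing to the limit in the Galerkin identity started at $t_0>0$ requires $E(u^n(t_0))\to E(u(t_0))$, which you only get for a.e.\ $t_0$, not for all $t_0$.) To close the gap you need the energy \emph{equality}, and justifying the chain rule $\frac{d}{dt}\Vert u\Vert_{H_0^1(\Omega)}^2=2(-\Delta u,u_t)$ requires exactly the extra regularity that the paper extracts instead: since $u\in L^{\infty}(0,T;H_0^1(\Omega))\hookrightarrow L^{\infty}(0,T;L^6(\Omega))$ and $|f(u)|\leq C_1(1+|u|^3)$, one has $f(u)\in L^2_{loc}(0,+\infty;L^2(\Omega))$, hence $\Delta u=u_t+f(u)-h\in L^2_{loc}(0,+\infty;L^2(\Omega))$, i.e.\ $u\in L^2_{loc}(0,+\infty;D(A))$. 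Together with $u_t\in L^2_{loc}(0,+\infty;L^2(\Omega))$, the standard interpolation result (\cite[p.102]{SellYou}) then gives $u\in C([0,+\infty);H_0^1(\Omega))$ directly, with no energy argument needed. Once you add this observation your proof is complete; without it, two-sided continuity in $H_0^1(\Omega)$ is not established.
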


\begin{proof}
We take as in \cite[p.281]{ChepVishikBook} the Galerkin approximations using
the basis of eigenfunctions $\{w_{j}\left(  x\right)  $, $j\in\mathbb{N\}}$,
of the Laplace operator with Dirichlet boundary conditions. Let $X_{m}%
=\{w_{1},...,w_{m}\}$ and let $P_{m}$ be the orthogonal projector from
$L^{2}\left(  \Omega\right)  $ onto $X_{m}$. Then $u_{m}\left(  t,x\right)
=\sum_{j=i}^{m}a_{j,m}\left(  t\right)  w_{j}\left(  x\right)  $ will be a
solution of the system of ordinary differential equations%
\begin{equation}
\frac{du_{m}}{dt}=P_{m}\Delta u_{m}-P_{m}f\left(  u_{m}\right)  +P_{m}%
h,\ u_{m}\left(  0\right)  =P_{m}u_{0}. \label{Gal}%
\end{equation}
It is proved in \cite[p.281]{ChepVishikBook} that passing to a subsequence
$u_{m}$ converges to a weak solution $u$ of (\ref{1}) weakly star in
$L^{\infty}\left(  0,T;L^{2}\left(  \Omega\right)  \right)  $, weakly in
$L^{4}\left(  0,T;L^{4}\left(  \Omega\right)  \right)  $ and weakly in
$L^{2}\left(  0,T;H_{0}^{1}\left(  \Omega\right)  \right)  $ for all $T>0$.
Also, $u_{mt}\rightarrow u_{t}$ weakly in $L^{\frac{4}{3}}\left(
0,T;H^{-s}\left(  \Omega\right)  \right)  $ for some $s>0.$

Multiplying (\ref{Gal}) by $u_{mt}$ we get%
\[
\frac{d}{dt}\left(  \Vert u_{m}\Vert_{H_{0}^{1}(\Omega)}^{2}+2(F(u_{m}%
),1)-2(h,u_{m})\right)  +2\Vert u_{mt}\Vert^{2}=0,
\]
so by (\ref{PropF}),%
\[
\Vert u_{m}(t)\Vert_{H_{0}^{1}(\Omega)}^{2}+2\int\limits_{0}^{t}\Vert
u_{mt}(s)\Vert^{2}ds
\]%
\[
\leq\Vert u_{m}\left(  0\right)  \Vert_{H_{0}^{1}(\Omega)}^{2}+R_{1}\Vert
u_{m}\left(  0\right)  \Vert_{L^{4}(\Omega)}^{4}+2\Vert h\Vert\Vert
u_{m}(t)\Vert+2\Vert h\Vert\Vert u_{0}\Vert+R_{2}.
\]
So from the Poincar\'{e} inequality we obtain%
\[
\frac{1}{2}\Vert u_{m}\left(  t\right)  \Vert_{H_{0}^{1}(\Omega)}^{2}%
+2\int\limits_{0}^{t}\Vert\frac{d}{ds}u_{m}(s)\Vert^{2}ds\leq R_{3}\Vert
u_{m}\left(  0\right)  \Vert_{H_{0}^{1}(\Omega)}^{4}+R_{4},
\]
where $R_{j}>0$.

By the choise of the special basis we have that $u_{m}\left(  0\right)
\rightarrow u_{0}$ in $H_{0}^{1}\left(  \Omega\right)  $. Then we have
\[
\int\limits_{0}^{t}\Vert\frac{d}{ds}u(s)\Vert^{2}ds\leq\liminf
\limits_{m\rightarrow\infty}\int\limits_{0}^{t}\Vert\frac{d}{ds}u_{m}%
(s)\Vert^{2}ds\leq R_{5}\left(  \Vert u_{0}\Vert_{H_{0}^{1}(\Omega)}%
^{4}+1\right)  ,
\]
so that (\ref{14}) holds and $u_{mt}\rightarrow u_{t}$ weakly in $L^{2}\left(
0,T;L^{2}\left(  \Omega\right)  \right)  $. Thus from the Ascoli-Arzel\`{a}
theorem $\{u_{m}\}$ is pre-compact in $C([0,T];L^{2}(\Omega))$ and then
$u_{m}\rightarrow u$ in $C([0,T];L^{2}(\Omega)).$

On the other hand, for any $t\geq0$ up to a subsequence $u_{m_{n}%
}(t)\rightarrow a$ weakly in $H_{0}^{1}(\Omega)$. But $u_{m_{n}}(t)\rightarrow
u(t)$ in $L^{2}(\Omega)$, so that $a=u(t)$ and
\[
\Vert u(t)\Vert_{H_{0}^{1}(\Omega)}^{2}\leq\liminf\ \Vert u_{m_{n}}%
(t)\Vert_{H_{0}^{1}(\Omega)}^{2}\leq R_{5}\left(  \Vert u_{0}\Vert_{H_{0}%
^{1}(\Omega)}^{4}+1\right)  ,
\]
so that (\ref{13}) holds.

As $u\in L^{\infty}(0,T;H_{0}^{1}(\Omega))\bigcap C([0,T];L^{2}(\Omega))$, we
have $u\in C([0,T];H_{0w}^{1}(\Omega))$, where $H_{0w}^{1}(\Omega)$ is the
space $H_{0}^{1}\left(  \Omega\right)  $ with the weak topology. Moreover, the
equality $\Delta u=u_{t}+f\left(  u\right)  -h$ and (\ref{2}), (\ref{13}),
(\ref{14}) imply that $u\in L_{loc}^{2}\left(  0,+\infty;D\left(  A\right)
\right)  $. Thus, by standard results \cite[p.102]{SellYou}, we obtain that
$u\in C([0,+\infty),H_{0}^{1}\left(  \Omega\right)  ).$
\end{proof}

\bigskip

Now we are ready to prove the main result of this section about the structure
of the global attractor. From now on for any $A\subset L^{2}\left(
\Omega\right)  $ we will denote by $\overline{A}$ its closure in $L^{2}\left(
\Omega\right)  .$

\begin{theorem}
\label{teor2} Under conditions (\ref{2}) for the global attractor $\Theta$ it
holds%
\begin{equation}
\Theta=\overline{M^{-}(\mathfrak{R})}. \label{17}%
\end{equation}
If, additionally, for any $u_{0}\in H_{0}^{1}(\Omega)$ problem (\ref{1}) has a
unique weak solution with $u(0)=u_{0}$, then $\Theta$ is bounded in $H_{0}%
^{1}(\Omega)$ and
\begin{equation}
\Theta=M^{+}(\mathfrak{R})=M^{-}(\mathfrak{R}). \label{18}%
\end{equation}

\end{theorem}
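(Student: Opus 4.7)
The proof rests on the gradient-like structure of the equation, encoded by the Lyapunov functional
\[
V(u)=\frac{1}{2}\|u\|_{H_{0}^{1}(\Omega)}^{2}+\int_{\Omega}F(u)\,dx-(h,u),
\]
which along sufficiently regular solutions of (\ref{1}) satisfies $\frac{d}{dt}V(u(t))=-\|u_{t}(t)\|^{2}\le 0$, with equality only at stationary points; the coercive estimate (\ref{PropF}) makes $V$ bounded below on bounded sets.

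For $\Theta=\overline{M^{-}(\mathfrak{R})}$, the inclusion $\overline{M^{-}(\mathfrak{R})}\subset\Theta$ is immediate: every $z\in M^{-}(\mathfrak{R})$ equals $\gamma(0)$ for some $\gamma\in\mathbb{K}$, hence lies in $\Theta$ by Theorem \ref{teor1}, and $\Theta$ is closed. For the converse, take $z\in\Theta$ and $\gamma\in\mathbb{K}$ with $\gamma(0)=z$. The plan is: first, establish instant $H_{0}^{1}$-regularization, i.e.\ $\gamma(\tau)\in H_{0}^{1}(\Omega)$ for every $\tau>0$, by using the energy equality (\ref{4}) to obtain $\int_{0}^{\tau}\|\gamma(s)\|_{H_{0}^{1}}^{2}ds<\infty$, choosing a good $s_{0}\in(0,\tau)$ with $\gamma(s_{0})\in H_{0}^{1}$, and then applying Lemma \ref{lem:4} together with property $(K4)$ to show that the shifted trajectory $\gamma_{\tau}(\cdot):=\gamma(\cdot+\tau)\in\mathbb{K}$ inherits the $V$-identity. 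Second, since $V(\gamma_{\tau}(t))$ is non-increasing and bounded, it tends to some $V_{\infty}$ as $t\to+\infty$; a multivalued LaSalle argument, in which for any $p\in\omega(\gamma_{\tau})$ property $(K4)$ extracts a trajectory through $p$ lying in $\Theta$ along which $V\equiv V_{\infty}$, forces $u_{t}\equiv 0$ and hence $p\in\mathfrak{R}$. Thus $\omega(\gamma_{\tau})\subset\mathfrak{R}$ and $\gamma(\tau)=\gamma_{\tau}(0)\in M^{-}(\mathfrak{R})$; continuity of $\gamma$ in $L^{2}$ gives $\gamma(\tau)\to z$ as $\tau\searrow 0$, whence $z\in\overline{M^{-}(\mathfrak{R})}$.

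The main obstacle is the LaSalle step in the multivalued, non-unique setting: unlike the single-valued case, one cannot point to a single trajectory through each $\omega$-limit point, so the pre-compactness $(K4)$ must be combined with lower semicontinuity of $V$ on $H_{0}^{1}$ to transfer the energy identity to the limit trajectory. Instant regularization is equally delicate because it is required for the specific $\gamma$ at hand, not merely for some weak solution issuing from $\gamma(0)$, and relies on selecting a representative time $s_0$ at which $\gamma(s_0)\in H_0^1$ and then using $(K4)$ to match the Galerkin-approximated solution of Lemma \ref{lem:4} with $\gamma$.

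For the second statement, assume uniqueness on $H_{0}^{1}$ data. To show $\Theta$ is bounded in $H_{0}^{1}(\Omega)$, integrate (\ref{4}) over $[0,1]$ to bound $\int_{0}^{1}\|u(s)\|_{H_{0}^{1}}^{2}ds$ in terms of $\|u(0)\|^{2}$; hence for every $u(0)\in\Theta$ some $s_{0}\in(0,1)$ gives $\|u(s_{0})\|_{H_{0}^{1}}\le C$ independent of $u(0)\in\Theta$, and (\ref{13}) together with the invariance $\Theta=G(1,\Theta)$ upgrades this to a uniform $H_{0}^{1}$ bound on $\Theta$. The identity $\Theta=M^{-}(\mathfrak{R})$ then follows from the previous argument with $\tau=0$ admissible. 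For $\Theta=M^{+}(\mathfrak{R})$: the inclusion $M^{+}(\mathfrak{R})\subset\Theta$ uses that any $\gamma\in\mathbb{F}$ with $\mathrm{dist}(\gamma(t),\mathfrak{R})\to 0$ as $t\to-\infty$ is bounded in the past (Lemma \ref{lem:1} says $\mathfrak{R}$ is bounded) and bounded forward by dissipativity, so $\gamma\in\mathbb{K}$ and $\gamma(0)\in\Theta$ by Theorem \ref{teor1}; conversely, for $z\in\Theta$ and $\gamma\in\mathbb{K}$ with $\gamma(0)=z$, applying the Lyapunov--LaSalle argument backward (now unambiguous thanks to uniqueness) gives $\alpha(\gamma)\subset\mathfrak{R}$, so $z\in M^{+}(\mathfrak{R})$.
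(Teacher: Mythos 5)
There are two genuine gaps, both tied to the lack of uniqueness. The first is the ``instant $H_{0}^{1}$-regularization'' of the given trajectory $\gamma$. Lemma \ref{lem:4} produces \emph{some} solution satisfying (\ref{13})--(\ref{14}) starting from $\gamma(s_{0})\in H_{0}^{1}(\Omega)$, but without uniqueness this solution need not coincide with $\gamma(\cdot+s_{0})$, and property $\left(  K4\right)  $ is only a compactness statement --- it cannot ``match'' the Galerkin-built solution with the particular weak solution $\gamma$ you started from. Hence you cannot conclude that $\gamma(\tau)\in H_{0}^{1}(\Omega)$ for every $\tau>0$, nor that $V$ is non-increasing along $\gamma$ itself. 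The paper's way out is the concatenation property $\left(  K3\right)  $: for $z\in\Theta\cap H_{0}^{1}(\Omega)$ it keeps the backward half of $\gamma$ and \emph{replaces} the forward half by the regular solution $u$ of Lemma \ref{lem:4}, obtaining a new $\tilde{\gamma}\in\mathbb{K}$ through $z$ that does converge to $\mathfrak{R}$; the density of $\Theta\cap H_{0}^{1}(\Omega)$ in $\Theta$ then yields the closure in (\ref{17}).

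The second gap is the LaSalle step, and it is precisely the obstruction recorded in Remark \ref{rem:2}. Along $t_{n}\rightarrow+\infty$ you only control $u(t_{n})$ weakly in $H_{0}^{1}(\Omega)$ (the attractor of the weak-solution semiflow is not known to be compact in $H_{0}^{1}(\Omega)$ or bounded in $H^{2}(\Omega)$), and $V$ is merely lower semicontinuous for that convergence: you obtain $V(p)\leq V_{\infty}$ at a weak limit point $p$, which does not force $V\equiv V_{\infty}$, hence $u_{t}\equiv0$, along the limit trajectory. The paper avoids $V$ altogether: from $\int_{0}^{\infty}\Vert u_{t}\Vert^{2}dt<\infty$ it selects times $t_{n}^{\prime}$ with $\Vert u_{t}(t_{n}^{\prime})\Vert\rightarrow0$, passes to the limit in $\Delta u(t_{n}^{\prime})=u_{t}(t_{n}^{\prime})+f(u(t_{n}^{\prime}))-h$ to get \emph{strong} $H_{0}^{1}$ convergence to a stationary point, and then transfers the conclusion to arbitrary times $t_{n}$ via the upper-semicontinuity (Kneser) property with $T_{k}\downarrow0$. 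The same objection applies to your backward LaSalle argument for $\Theta=M^{+}(\mathfrak{R})$; the paper instead uses the uniform bounds $\Vert\gamma(t)\Vert_{H_{0}^{1}(\Omega)}\leq K$ for $t\leq0$ and $\int_{-\infty}^{0}\Vert\gamma_{t}\Vert^{2}ds\leq K$ and repeats the elliptic argument on $(-\infty,0)$. Your proofs of $\overline{M^{-}(\mathfrak{R})}\subset\Theta$, of $M^{+}(\mathfrak{R})\subset\Theta$, and of the $H_{0}^{1}$-boundedness of $\Theta$ under uniqueness are correct and essentially those of the paper. (A Lyapunov-function proof along the lines you propose does work, but only for the more regular semiflows of Sections \ref{StrReg} and \ref{StrStrong}, where the attractor is compact in $H_{0}^{1}(\Omega)$ and the energy identity is available.)
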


\begin{proof}
First of all $\overline{\Theta\bigcap H_{0}^{1}(\Omega)}=\Theta,$ as for any
$\gamma\in\mathbb{K}$ we have that $\gamma\left(  t\right)  \in H_{0}%
^{1}(\Omega)$ for a.a. $t\in\mathbb{R}$ and $\gamma:\mathbb{R}\rightarrow
L^{2}(\Omega)$ is continuous. Let us prove that $\Theta\bigcap H_{0}%
^{1}(\Omega)={M^{-}(\mathfrak{R})}$. Let $z\in\Theta\bigcap H_{0}^{1}(\Omega
)$. Due to Theorem \ref{teor1} there exists $\gamma\in\mathbb{K}$ such
that$\,\ \gamma(0)=z$. Due to Lemma \ref{lem:4} there exist a weak solution
$u(\cdot)$ of (\ref{1}) satisfying (\ref{13}), (\ref{14}) and $\ u(0)=z$. Then
$\left(  K3\right)  $ implies that%
\[
\tilde{\gamma}(t)=\left\{
\begin{array}
[c]{l}%
\gamma(t),\,\,\ t<0\\
u(t),\,\,\ t\geq0
\end{array}
\right.  ,\,\ \tilde{\gamma}(0)=z,
\]
belongs to $\mathbb{K}$.

Let us prove that $\mathrm{dist}_{L^{2}(\Omega)}(u(t),\mathfrak{R}%
)\rightarrow0,\,\ $as$\ t\rightarrow+\infty$.

Let us take arbitrary $t_{n}\rightarrow\infty$. From (\ref{14}),
\[
\,\,\ \int\limits_{t_{n}-T}^{t_{n}}\Vert u_{t}(s)\Vert^{2}ds\rightarrow
0,\,\text{as}\,\ n\rightarrow\infty\text{, }\forall T>0.
\]
So there exists $t_{n}^{\prime}\in\lbrack t_{n}-T,t_{n}]$ such that
\[
\Vert u_{t}(t_{n}^{\prime})\Vert\rightarrow0,\,\,\,\ n\rightarrow\infty.
\]
From (\ref{13}) up to a subsequence $u(t_{n}^{\prime})\rightarrow\tilde{u}$
weakly in $H_{0}^{1}(\Omega)$. Then $u(t_{n}^{\prime})\rightarrow\tilde{u}$ in
$L^{2}(\Omega)$, so that $u(t_{n}^{\prime},x)\rightarrow\tilde{u}(x)$ a.e.,
and from \cite[p.12, Lemma 1.3]{Lions} we have $f(u(t_{n}^{\prime
}))\rightarrow f(\tilde{u})$ weakly in $L^{2}(\Omega)$.

The following equality
\[
u_{t}(t)=\Delta u(t)-f(u(t))-h
\]
in $H^{-1}(\Omega)$ is true for a.a. $t$. We can take $t_{n}^{\prime}$ from
this set of full measure and then we have%
\[
\Delta u(t_{n}^{\prime})-h=f(u(t_{n}^{\prime}))+u_{t}(t_{n}^{\prime
})\rightarrow f(\tilde{u})\text{ }\ \text{weakly in }L^{2}\left(
\Omega\right)  \text{ and then in }H^{-1}(\Omega).
\]
From this $u(t_{n}^{\prime})\rightarrow\tilde{u}$ in $H_{0}^{1}(\Omega)$ and
\[
\Delta\tilde{u}-f(\tilde{u})=h\mbox { in }H^{-1}(\Omega),\text{ that is,
}\tilde{u}\in\mathfrak{R}.
\]

Let us show that up to a subsequence $\mathrm{dist}_{L^{2}(\Omega)}%
(u(t_{n}),\mathfrak{R})\rightarrow0,\,$as$\,\ n\rightarrow\infty$. From
(\ref{13}) $u(t_{n})\rightarrow a$ weakly in $H_{0}^{1}(\Omega)$. Also%
\[
u(t_{n})\in G(t_{n}-t_{n}^{\prime},u(t_{n}^{\prime}))\,\
\]
implies%
\[
\ u(t_{n})=\varphi_{n}(t_{n}-t_{n}^{\prime}),\,\ \varphi_{n}(0)=u(t_{n}%
^{\prime}),\,\ \varphi_{n}\in K^{+}.
\]
As $u(t_{n}^{\prime})\rightarrow\tilde{u}$ in $L^{2}\left(  \Omega\right)  $,
$t_{n}-t_{n}^{\prime}\rightarrow\tau\in\lbrack0,T]$, by Theorem 3.11 in
\cite{KMVY} (see also \cite[Lemma 2]{KapustValero06}) passing to a subsequence
we have%
\[
\varphi_{n}(t_{n}-t_{n}^{\prime})\rightarrow\varphi(\tau)\in G(\tau,\tilde
{u}),
\]
that is,
\[
a\in G(\tau,\tilde{u}).
\]
We take in the previous arguments $T_{k}\downarrow0$. Then for any $k\geq1$
there exist $\tilde{u}_{k}\in\mathfrak{R}$, $\tau_{k}\in\lbrack0,T_{k}]$ such
that%
\[
a\in G(\tau_{k},\tilde{u}_{k}).
\]
Since $\mathfrak{R}$ is compact in $L^{2}(\Omega)$, up to a subsequence
$\tilde{u}_{k}\rightarrow\tilde{u}\in\mathfrak{R}$, as $\tau_{k}\rightarrow0$.
Thus by Theorem 3.11 in \cite{KMVY} we have $a\in G(0,\tilde{u})$ and then
$a=\tilde{u}$. By Theorem \ref{PropK+} $a\in\mathfrak{R}$.

So, from this we easy deduce that
\[
\mathrm{dist}_{L^{2}(\Omega)}(u(t),\mathfrak{R})\rightarrow
0,\,\,\ t\rightarrow+\infty.
\]
Hence, (\ref{17}) is proved.

Now let (\ref{1}) have a unique weak solution for every $u_{0}\in H_{0}%
^{1}(\Omega)$ (for example, it is true if $\left(  f(u)-f(v)\right)
(u-v)\geq-C|u-v|^{2},$ $\forall u,v\in\mathbb{R}$, for some $C>0$).

Then for any $z\in\Theta\,\ $we have $z=\gamma(0)=G(\tau,\gamma(-\tau))$ and
$\gamma(\tau)\in H_{0}^{1}(\Omega)$ for a.a. $\tau$. So $\gamma(t)=G(t+\tau
,\gamma(-\tau))$, $\forall t\geq0\,$, and if we repeat for the point
$\gamma(-\tau)\in H_{0}^{1}(\Omega)$ all the previous arguments, we obtain
$z=\gamma(0)\in H_{0}^{1}(\Omega)$ (by Lemma \ref{lem:4}) and $\mathrm{dist}%
_{L^{2}(\Omega)}(\gamma(t),\mathfrak{R})\rightarrow0,\,\,\ t\rightarrow
+\infty$. Then $\Theta\subset H_{0}^{1}(\Omega)$ and $\Theta=M^{-}%
(\mathfrak{R})$.

Moreover, $\Theta$ is bounded in $H_{0}^{1}(\Omega)$. Indeed, for $z\in
\Theta,\,\ z=\gamma(0)\in H_{0}^{1}(\Omega)$ and from (\ref{13}) and the
uniqueness of the solution we get
\[
\Vert z\Vert_{H_{0}^{1}(\Omega)}^{2}\leq C\left(  1+\Vert\gamma(\tau
)\Vert_{H_{0}^{1}(\Omega)}^{4}\right)  \,\,\,\ \forall\tau\leq0.
\]
For every $t\geq\tau$, by standard estimates from (\ref{4}), $\gamma(\cdot)$
satisfies
\[
\int\limits_{\tau}^{t}\Vert\gamma(s)\Vert_{H_{0}^{1}(\Omega)}^{2}ds\leq
\Vert\gamma(\tau)\Vert^{2}+\tilde{C}(t-\tau).
\]
Since $\Theta$ is bounded in $L^{2}(\Omega)$, for some $\tau^{\prime}%
\in(-1,0)$ we have $\Vert\gamma(\tau^{\prime})\Vert_{H_{0}^{1}(\Omega)}%
\leq\tilde{K}$, where $\tilde{K}$ does not depend on $\gamma$. So, $\Vert
z\Vert_{H_{0}^{1}(\Omega)}^{2}\leq C(1+\tilde{K}^{4}).$

Let us prove $\Theta=M^{+}(\mathfrak{R})$. Let $z\in\Theta$. Then
$z=\gamma(0)$, $\gamma\in\mathbb{K}$, and from the uniqueness of the solution
and Lemma \ref{lem:4} we have
\begin{equation}%
\begin{array}
[c]{l}%
\Vert\gamma(t)\Vert_{H_{0}^{1}(\Omega)}^{2}\leq C\left(  1+\Vert\gamma
(\tau)\Vert_{H_{0}^{1}(\Omega)}^{4}\right)  ,\\
\int\limits_{\tau}^{t}\Vert\gamma_{t}(s)\Vert^{2}ds\leq C\left(  1+\Vert
\gamma(\tau)\Vert_{H_{0}^{1}(\Omega)}^{4}\right)  ,\text{ }\forall t\geq\tau.
\end{array}
\label{19}%
\end{equation}
$\Theta$ is bounded in $H_{0}^{1}(\Omega)$, so from (\ref{19}) there exists
$K>0$ such that%
\begin{equation}%
\begin{array}
[c]{l}%
\Vert\gamma(t)\Vert_{H_{0}^{1}(\Omega)}\leq K\,\,\ \forall t\leq0,\\
\int\limits_{-\infty}^{0}\Vert\gamma_{t}(s)\Vert^{2}ds\leq K.
\end{array}
\label{20}%
\end{equation}
After that we can repeat the previous arguments on $(-\infty,0)$ and obtain
that
\[
\mathrm{dist}_{L^{2}(\Omega)}(\gamma(t),\mathfrak{R})\rightarrow
0,\,\ t\rightarrow-\infty.
\]

The theorem is proved.
\end{proof}

\begin{remark}
\label{rem:2} Even in the case of uniqueness we cannot use the Lyapunov
function method as in \textbf{\cite{Temam2}}, because we know nothing about
the boundedness of $\Theta$ in $H^{2}(\Omega)\bigcap H_{0}^{1}(\Omega
)$\textbf{. }However, it is possible to use the Lyapunov function if the
attractor is compact in $H_{0}^{1}(\Omega)$, as we will see in the next section.
\end{remark}

\begin{remark}
\label{rem:angl} Under condition (\ref{2}) we also have $\overline
{M^{+}(\mathfrak{R})}\subset\Theta$.
\end{remark}

\bigskip

When $\mathfrak{R}$ is finite, we can write $M^{-}(\mathfrak{R})$
($M^{+}(\mathfrak{R})$) as the union of the corresponding sets for each of the
stationary points. For $z\in\mathfrak{R}$ let
\[%
\begin{array}
[c]{c}%
M^{-}(z)=\left\{  y:\,\exists\gamma(\cdot)\in\mathbb{K},\,\ \gamma
(0)=y,\,\,\,\ \left\Vert \gamma(t)-z\right\Vert \rightarrow0,\,\ t\rightarrow
+\infty\right\}  ,\\
M^{+}(z)=\left\{  y:\,\exists\gamma(\cdot)\in\mathbb{K},\,\ \gamma
(0)=y,\,\,\,\ \left\Vert \gamma(t)-z\right\Vert \rightarrow0,\,\ t\rightarrow
-\infty\right\}  .
\end{array}
\]

\begin{lemma}
\label{lem:2}Let (\ref{2}) hold. If $\mathfrak{R}=\{z_{i}\}_{i=1}^{n}$, then
$M^{\pm}(\mathfrak{R})=\bigcup\limits_{i=1}^{n}M^{\pm}(z_{i})$.
\end{lemma}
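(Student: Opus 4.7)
The plan is to establish both inclusions separately for each sign. The direction $\bigcup_{i=1}^{n} M^{\pm}(z_{i}) \subset M^{\pm}(\mathfrak{R})$ is immediate from the definitions, since $\mathrm{dist}_{L^{2}(\Omega)}(\gamma(t),\mathfrak{R}) \leq \|\gamma(t)-z_{i}\|$ and any $\gamma \in \mathbb{K}$ trivially lies in $\mathbb{F}$. The interesting direction relies on the elementary fact that a finite set $\{z_{1},\ldots,z_{n}\} \subset L^{2}(\Omega)$ admits a positive separation constant
\[
\delta := \tfrac{1}{3}\min_{i\neq j}\|z_{i}-z_{j}\| > 0.
\]

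For $M^{-}(\mathfrak{R})$, take $z \in M^{-}(\mathfrak{R})$ with associated $\gamma \in \mathbb{K}$. Choose $T>0$ so that $\mathrm{dist}_{L^{2}(\Omega)}(\gamma(t),\mathfrak{R}) < \delta$ for all $t \geq T$, so that $\gamma([T,+\infty))$ is contained in the pairwise disjoint union $\bigcup_{i=1}^{n} B_{L^{2}}(z_{i},\delta)$. Since $\gamma:\mathbb{R}\to L^{2}(\Omega)$ is continuous (weak solutions belong to $C([0,+\infty);L^{2}(\Omega))$) and $[T,+\infty)$ is connected, the image must lie entirely in a single ball $B_{L^{2}}(z_{i_{0}},\delta)$. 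Inside this ball $z_{i_{0}}$ is unambiguously the closest point of $\mathfrak{R}$, so $\mathrm{dist}_{L^{2}(\Omega)}(\gamma(t),\mathfrak{R}) = \|\gamma(t)-z_{i_{0}}\|$ for $t \geq T$. Hence $\|\gamma(t)-z_{i_{0}}\| \to 0$ and $z \in M^{-}(z_{i_{0}})$.

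The $M^{+}$ case repeats the same connectedness argument on $(-\infty,-T]$ to produce an index $i_{0}$ with $\|\gamma(t)-z_{i_{0}}\|\to 0$ as $t\to -\infty$. The only subtlety, which I expect to be the main obstacle, is the definitional asymmetry that $M^{+}(\mathfrak{R})$ is written using $\gamma\in\mathbb{F}$ while $M^{+}(z_{i_{0}})$ demands $\gamma\in\mathbb{K}$, so one must verify that such a $\gamma$ is actually bounded on all of $\mathbb{R}$. Boundedness on $(-\infty,-T]$ is automatic from confinement in the ball $B_{L^{2}}(z_{i_{0}},\delta)$. For $[-T,+\infty)$ we invoke Theorem \ref{teor1}: the singleton $\{\gamma(-T)\}$ is bounded, so the attraction property \eqref{Attraction} places its forward orbit in any fixed bounded neighbourhood of $\Theta$ eventually, and on the bounded time interval $[-T,-T+t_{0}]$ used to reach that neighbourhood the solution is a fortiori bounded. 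Combining the two halves gives $\gamma\in\mathbb{K}$ and therefore $z\in M^{+}(z_{i_{0}})$, which completes the proof.
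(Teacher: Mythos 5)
Your proof is correct, and it rests on the same key idea as the paper's --- the tail of the trajectory is a connected set that, being eventually confined to a small neighbourhood of the finite set $\mathfrak{R}$, must settle near a single $z_{i_{0}}$ --- but the implementation differs. The paper forms the sets $\Gamma_{\tau}=\overline{\bigcup_{t\leq\tau}\gamma(t)}$, notes that each is compact and connected because it lies in $\Theta$, and deduces that the nested intersection $\bigcap_{\tau<0}\Gamma_{\tau}$ is a nonempty compact connected subset of $\mathfrak{R}$, hence a singleton, from which $\gamma(t)\to z_{i_{0}}$ follows. You instead separate the $z_{i}$ by pairwise disjoint open balls of radius $\delta$ and use only the connectedness of the continuous image $\gamma([T,+\infty))$ (resp.\ $\gamma((-\infty,-T])$); this is slightly more elementary, since it avoids both the compactness of $\Theta$ and the fact that a nested intersection of compact connected sets is connected. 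You also make explicit a point the paper treats tacitly: in the $M^{+}$ case the defining trajectory is only assumed to lie in $\mathbb{F}$, whereas membership in $M^{+}(z_{i_{0}})$ requires $\mathbb{K}$, so boundedness must be checked. Your verification (backward in time from confinement in the ball, forward in time from the attraction property of $\Theta$ combined with continuity of $\gamma$ on compact time intervals) is sound and matches the observation the paper makes just after introducing (\ref{M}). Both routes are valid; yours is marginally more self-contained.
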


\begin{proof}
Let $y\in M^{+}(\mathfrak{R})$. Then there exists $\gamma\in\mathbb{K}%
\,\ $such that$\ \gamma(0)=y,\,\ \mathrm{dist}_{L^{2}(\Omega)}(\gamma
(t),\mathfrak{R})\rightarrow0,\,$as$\ t\rightarrow-\infty$. For any $\tau<0$
the set $\Gamma_{\tau}=\overline{\bigcup\limits_{t\leq\tau}\gamma(t)}$ is
connected and compact (as $\Gamma_{\tau}\subset\Theta$). So, $\bigcap
\limits_{\tau<0}\Gamma_{\tau}$ is connected and compact. As for all
$\varepsilon>0\,$there exists $T<0$ such that $\gamma(t)\in\mathcal{O}%
_{\varepsilon}(\mathfrak{R})$, $\forall t\leq T$, we have $\bigcap
\limits_{\tau<0}\Gamma_{\tau}\subset\mathfrak{R}\,$. Then%
\[
\ \bigcap\limits_{\tau<0}\Gamma_{\tau}=\{z_{i_{0}}\}\subset\mathfrak{R}\,,\,\
\]
so that%
\[
\gamma(t)\rightarrow z_{i_{0}},\,\text{as}\ t\rightarrow-\infty\,\ \text{and
}\ y\in M^{+}(z_{i_{0}}).
\]

For $M^{-}(\mathfrak{R})$ the proof is similar.
\end{proof}

\bigskip

We finish this section with a reularity result of the global attractor in the
space $L^{\infty}(\Omega)$.

\begin{lemma}
\label{lem:5} Under conditions (\ref{2}) and $h\in L^{\infty}(\Omega)$ the set
$\Theta$ is bounded in $L^{\infty}(\Omega)$.
\end{lemma}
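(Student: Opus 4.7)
The plan is to use a Stampacchia-type truncation along bounded complete trajectories in the attractor, exploiting that the super-quartic dissipativity $f(u)u \geq \alpha|u|^4 - C_2$ from (\ref{2}) forces $f$ to eventually exceed $\|h\|_{L^\infty}$ in absolute value. First I would fix $M>0$ large enough so that $f(u) \geq \|h\|_{L^\infty(\Omega)}$ for all $u \geq M$ and $f(u) \leq -\|h\|_{L^\infty(\Omega)}$ for all $u \leq -M$; such an $M$ exists since (\ref{2}) implies $f(u) \to \pm\infty$ as $u \to \pm\infty$.

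Next, pick an arbitrary $z \in \Theta$ and, by Theorem~\ref{teor1}, a $\gamma \in \mathbb{K}$ with $\gamma(0)=z$. I would test the equation satisfied by $\gamma$ against the truncation $w(t):=(\gamma(t)-M)^+$, which belongs to $L^2_{loc}(\mathbb{R}; H_0^1(\Omega)) \cap L^4_{loc}(\mathbb{R}; L^4(\Omega))$ (it vanishes on $\partial\Omega$ because $\gamma$ does and $M \geq 0$), and derive an analogue of the energy identity (\ref{4}):
\[
\frac{1}{2}\frac{d}{dt}\|w(t)\|^2 + \|\nabla w(t)\|^2 + \int_{\Omega}\bigl(f(\gamma)-h\bigr)\,w\,dx = 0 \quad \text{for a.e.\ }t.
\]
The choice of $M$ makes the last integrand non-negative pointwise (on $\{\gamma>M\}$ one has $f(\gamma) \geq \|h\|_\infty \geq h$, and elsewhere $w\equiv 0$), so combining with the Poincar\'e inequality $\|w\|^2 \leq \lambda_1^{-1}\|\nabla w\|^2$ I would obtain
\[
\frac{d}{dt}\|w(t)\|^2 + 2\lambda_1\|w(t)\|^2 \leq 0.
\]
Integrating from $-T$ to $0$ and using that $\gamma(-T) \in \Theta$, so $\|\gamma(-T)\| \leq R$ for some $R$ independent of $\gamma$ and $T$ (since $\Theta$ is bounded in $L^2(\Omega)$), yields $\|(z-M)^+\|^2 \leq e^{-2\lambda_1 T} R^2 \to 0$ as $T\to+\infty$. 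Thus $z \leq M$ a.e., and the symmetric argument applied to $(-\gamma-M)^+$ gives $z \geq -M$ a.e., so $\|z\|_{L^\infty(\Omega)} \leq M$ uniformly in $z \in \Theta$.

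The main technical obstacle is rigorously justifying the chain rule for the non-smooth functional $t \mapsto \|w(t)\|^2$ along a weak solution. This can be handled either by a time-mollification of $\gamma$ followed by a limit passage, or by working on the Galerkin approximations (as in the proof of Lemma~\ref{lem:4}) multiplied by $(u_m - M)^+$ and then passing to the limit using weak lower semicontinuity of $\|\nabla(\cdot-M)^+\|^2$ and dominated convergence for the nonlinear term (whose uniform integrability follows from the growth condition in (\ref{2})).
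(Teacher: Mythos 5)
Your proposal is correct and follows essentially the same route as the paper: a Stampacchia truncation $(\gamma-M)^{+}$ justified by regularization, the sign condition on $f(\gamma)-h$ above the level $M$, the resulting exponential decay $\Vert(\gamma-M)^{+}(t)\Vert^{2}\leq\Vert(\gamma-M)^{+}(\tau)\Vert^{2}e^{-2\lambda_{1}(t-\tau)}$, and the passage $\tau\rightarrow-\infty$ along a bounded complete trajectory. The only cosmetic difference is the choice of $M$: the paper takes $M=\left(\tilde{C}_{2}/\tilde{\alpha}\right)^{1/4}$ from the quantitative bound $g(u)u\geq\tilde{\alpha}|u|^{4}-\tilde{C}_{2}$ with $g=f-h$, while you take $M$ from the qualitative fact that $f(u)\rightarrow\pm\infty$, which is an equivalent consequence of (\ref{2}).
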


\begin{proof}
In fact, the arguments are the same as in \cite[p.321]{Temam2}.

Let $\varphi_{+}=\max\{\varphi,0\}$. It is known that for any $u\in
\mathcal{D}(\tau,T;H_{0}^{1}(\Omega)),$ $\eta\in C_{0}^{\infty}(\tau,T)$,%
\begin{equation}
\int\limits_{\tau}^{T}(u_{t},u^{+})\eta dt=-\frac{1}{2}\int\limits_{\tau}%
^{T}\Vert u^{+}\Vert^{2}\eta_{t}dt. \label{21}%
\end{equation}
For an arbitrary complete trajectory of (\ref{1}) we have $u\in L^{2}%
(\tau,T;H_{0}^{1}(\Omega))\bigcap L^{4}(\tau,T;L^{4}(\Omega))$, $u_{t}\in
L^{2}(\tau,T;H^{-1}(\Omega))+L^{\frac{4}{3}}(\tau,T;L^{\frac{4}{3}}(\Omega))$.

So, by standard regularization we find functions $u_{n}\in\mathcal{D}%
(\tau,T;H_{0}^{1}(\Omega))$ such that
\[
u_{n}\rightarrow u\mbox { in }L^{2}(\tau,T;H_{0}^{1}(\Omega))\bigcap
L^{4}(\tau,T;L^{4}(\Omega))
\]%
\[
u_{n_{t}}\rightarrow u_{t}\mbox { in }L^{2}(\tau,T;H^{-1}(\Omega))+L^{\frac
{4}{3}}(\tau,T;L^{\frac{4}{3}}(\Omega)).
\]
As $u_{n}^{+}\rightarrow u^{+}$ in $L^{2}(\tau,T;H_{0}^{1}(\Omega))\bigcap
L^{4}(\tau,T;L^{4}(\Omega))$, we can pass to the limit in (\ref{21}) and
obtain that (\ref{21}) is true for every solution of (\ref{1}) on $[\tau,T]$.
Then putting $g=f-h$ for any $M>0$ we have
\[
\frac{1}{2}\frac{d}{dt}\Vert(u-M)^{+}\Vert^{2}+\Vert(u-M)^{+}\Vert_{H_{0}%
^{1}(\Omega)}^{2}+\int\limits_{\Omega}g(x,u)(u-M)^{+}dx=0.
\]
From (\ref{2}) and $h\in L^{\infty}(\Omega)$ for a.a. $x\in\Omega$ and
$u\in\mathbb{R},$%
\[
\tilde{\alpha}|u|^{4}-\tilde{C}_{2}\leq g(u)u\leq\tilde{C}_{1}|u|^{4}%
+\tilde{C}_{1},
\]
where $\tilde{\alpha}$ does not depend on $u,\,x$.

If $u \le M$, then $g(u)(u-M)^{+}=0$.

If $u>M$, then
\begin{align*}
g(x,u)(u-M)^{+}  &  =g(x,u)u\frac{(u-M)^{+}}{u}=g(x,u)u(1-\frac{M}{u})\\
&  \geq(\tilde{\alpha}u^{4}-\tilde{C}_{2})(1-\frac{M}{u})\geq(\tilde{\alpha
}M^{4}-\tilde{C}_{2})(1-\frac{M}{u})
\end{align*}
and if we choose $M=\left(  \frac{\tilde{C}_{2}}{\tilde{\alpha}}\right)
^{\frac{1}{4}}$, then $g(x,u)(u-M)^{+}\geq0$ a.e.

Then
\[
\frac{d}{dt}\Vert(u-M)^{+}\Vert^{2}+2\Vert(u-M)^{+}\Vert_{H_{0}^{1}(\Omega
)}^{2}\leq0
\]
and for all $t>\tau$,%
\begin{equation}
\Vert(u-M)^{+}(t)\Vert^{2}\leq\Vert(u-M)^{+}(\tau)\Vert^{2}e^{-2\lambda
_{1}(t-\tau)}. \label{22}%
\end{equation}
If $u(\cdot)\in\mathbb{K}$, then from (\ref{22}) taking $\tau\rightarrow
-\infty$ we obtain $u(x,t)\leq M,\ \forall t\in\mathbb{R}$, for a.a.
$x\in\Omega.$

In the same way we will have $u(x,t)\ge M$ (using $(u-M)^{-}$).

Then
\[
ess\sup\limits_{x\in\Omega}|z(x)|\leq M\text{, }\forall z\in\Theta.
\]

\end{proof}

\begin{remark}
\label{rem:3}The set $M^{+}\left(  \mathfrak{R}\right)  $ can be used in order
to study properties of the global attractor as the fractal dimension. Let us
consider an example which shows that a finite estimate of \ the fractal
dimension of the global attractor for problem (\ref{1}) is not preserved under
small, but unregular perturbations (even in the single-valued case).

Let $h(x)\equiv0$, $f_{k}(u)=u^{3}-k^{-\frac{1}{2}}\sin\left(  k\cdot
u\right)  $. Then for any $k\in\mathbb{Z}$ $\ f_{k}$ satisfies (\ref{2}) with
constants which do not depend on $k$. In this case $z=0\in\mathfrak{R}$,
$G_{k}(t,u_{0})=S_{k}(t)u_{0}$ is a single-valued semigroup and due to
\cite[p.496]{Temam2} $z=0$ is a hyperbolic point if $\lambda_{i}\not =%
k^{\frac{1}{2}}$ ($\lambda_{i}$ are the eigenvalues of $-\Delta$),
$M^{+}(0)\subset\Theta$ and $M^{+}(0)$ is a smooth manifold with dimension
$n_{k}$, where $n_{k}$ is the number of eigenvalues of $S_{k}^{\prime}(t)$ in
$\{|\lambda|<1\}$, that is, the number of eigenvalues of $-\Delta$ which
satisfy the inequality $\lambda_{i}<k^{\frac{1}{2}}$.

Thus, if $k\rightarrow\infty$, then for the attractors $\Theta_{k}$ we obtain
\[
\dim\Theta_{k}\geq\dim M^{+}(0)=n_{k}\rightarrow\infty,\,\,\ k\rightarrow
\infty.
\]
So, under conditions (\ref{2}), we can have arbitrary large dimension of the
global attractor, although $f_{k}\left(  u\right)  $ is a small perturbation
of $f_{0}\left(  u\right)  =u^{3}$, for which it is easy to see that
$\mathfrak{R\equiv\{}0\mathfrak{\}}$, so
\[
\dim\Theta_{0}=0.
\]

\end{remark}

\section{Existence and structure of the global attractor for regular
solutions\label{StrReg}}

We shall prove in this section that the equality
\[
\Theta=M^{-}\left(  \mathfrak{R}\right)  =M^{+}(\mathfrak{R})
\]
holds if we consider more regular solutions than in the previous section.

The function $u\in L_{loc}^{2}(0,+\infty;H_{0}^{1}(\Omega))\bigcap L_{loc}%
^{4}(0,+\infty;L^{4}(\Omega))$ is called a regular solution of (\ref{1}) on
$(0,+\infty)$ if for all $T>0,\,v\in H_{0}^{1}(\Omega)\,\ $and $\eta\in
C_{0}^{\infty}(0,T)$ we have%
\begin{equation}
-\int\limits_{0}^{T}(u,v)\eta_{t}dt+\int\limits_{0}^{T}\left(  (u,v)_{H_{0}%
^{1}(\Omega)}+(f(u),v)-(h,v)\right)  \eta dt=0, \label{Eq1}%
\end{equation}
and%
\begin{align}
u  &  \in L^{\infty}\left(  \varepsilon,T;H_{0}^{1}\left(  \Omega\right)
\right)  ,\label{LInfH1}\\
u_{t}  &  \in L^{2}\left(  \varepsilon,T;L^{2}\left(  \Omega\right)  \right)
,\ \forall\text{ }0<\varepsilon<T. \label{DerivL2}%
\end{align}

On the other hand, from (\ref{2}) we get
\[
\int_{r}^{T}\int_{\Omega}\left\vert f\left(  u\right)  \right\vert
^{2}dxdt\leq K\int_{r}^{T}\left(  1+\left\Vert u\left(  t\right)  \right\Vert
_{H_{0}^{1}\left(  \Omega\right)  }^{6}\right)  dt.
\]
Then the equality $\Delta u=u_{t}+f\left(  u\right)  -h$ and (\ref{LInfH1}%
)-(\ref{DerivL2}) imply that%
\begin{equation}
u\in L^{2}\left(  \varepsilon,T;D\left(  A\right)  \right)  \label{D(A)}%
\end{equation}
for any regular solution $u$.

\begin{theorem}
\label{ExistenceRegSol}Let (\ref{2}) hold. For any $u_{0}\in L^{2}\left(
\Omega\right)  $ there exists at least one regular solution of (\ref{1}) such
that $u\left(  0\right)  =u_{0}.$ Moreover, there exist $R_{i}>0$ such that
every regular solution with $u\left(  0\right)  =u_{0}\in L^{2}\left(
\Omega\right)  $ satisfies
\begin{equation}
\left\Vert u\left(  t+r\right)  \right\Vert _{H_{0}^{1}\left(  \Omega\right)
}^{2}\leq R_{1}\left(  \frac{e^{-\lambda_{1}t}\left\Vert u_{0}\right\Vert
^{2}+1}{r}+1+r\right)  e^{r}\text{,} \label{PropReg1}%
\end{equation}%
\begin{equation}
\left\Vert u\left(  t\right)  \right\Vert ^{2}\leq e^{-\lambda_{1}t}\left\Vert
u_{0}\right\Vert ^{2}+R_{2}, \label{PropReg2}%
\end{equation}%
\begin{equation}
\int_{r}^{T}\left\Vert u_{t}\right\Vert ^{2}dt\leq R_{3}\left(  \frac
{\left\Vert u_{0}\right\Vert ^{2}+1}{r}+1+r\right)  e^{r}, \label{PropReg3}%
\end{equation}%
\begin{equation}
\int_{r}^{T}\left\Vert \Delta u\right\Vert ^{2}dt\leq R_{4}\left(
T-r+1\right)  \left(  \frac{\left\Vert u_{0}\right\Vert ^{2}+1}{r}+1+r\right)
^{3}e^{3r}, \label{PropReg4}%
\end{equation}
for all $0\leq t<t+r<+\infty.$ Thus,
\begin{equation}
u\in C\left(  (0,+\infty),H_{0}^{1}\left(  \Omega\right)  \right)  ,
\label{PropReg5}%
\end{equation}%
\begin{equation}
\frac{d}{dt}\left\Vert u\right\Vert _{H_{0}^{1}\left(  \Omega\right)  }%
^{2}=2\left(  -\Delta u,u_{t}\right)  \text{ for a.a. }t>0. \label{PropReg6}%
\end{equation}
Moreover, the following energy equality holds%
\begin{equation}
E\left(  u\left(  t\right)  \right)  +2\int_{s}^{t}\left\Vert u_{r}\right\Vert
^{2}dr=E\left(  u\left(  s\right)  \right)  \text{, for all }t\geq s>0,
\label{Energy}%
\end{equation}
where $E\left(  u\left(  t\right)  \right)  =\left\Vert u\left(  t\right)
\right\Vert _{H_{0}^{1}\left(  \Omega\right)  }^{2}+2\left(  F\left(  u\left(
t\right)  \right)  ,1\right)  -2\left(  h,u\left(  t\right)  \right)  .$
\end{theorem}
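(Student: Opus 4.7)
The existence is obtained via the Galerkin scheme used in Lemma~\ref{lem:4}, but now starting from an arbitrary $u_{0}\in L^{2}(\Omega)$ instead of $H_{0}^{1}(\Omega)$. With $u_{m}(0)=P_{m}u_{0}$, each finite-dimensional solution of (\ref{Gal}) lies in $C^{1}([0,\infty);X_{m})\subset C^{1}([0,\infty);H_{0}^{1}(\Omega))$, so that multiplications by $u_{m}$, $u_{mt}$ or $-\Delta u_{m}$ are legitimate at the Galerkin level; the task is to derive estimates whose right-hand side depends only on $\|u_{0}\|$ and $r>0$. The $L^{2}$ bound (\ref{PropReg2}) is obtained by multiplying (\ref{Gal}) by $u_{m}$ and combining $f(u)u\geq\alpha|u|^{4}-C_{2}$ with Young on $(h,u_{m})$ and Poincaré, which gives $\tfrac{d}{dt}\|u_{m}\|^{2}+\lambda_{1}\|u_{m}\|^{2}\leq C$ and hence the claim by Gronwall.

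For the $H_{0}^{1}$ bound (\ref{PropReg1}) I multiply (\ref{Gal}) by $u_{mt}$; an integration by parts produces the energy identity
\[
\tfrac{d}{dt}E(u_{m})+2\|u_{mt}\|^{2}=0,
\]
so that $t\mapsto E(u_{m}(t))$ is nonincreasing. This is the crux: for every $\tau\in[t,t+r]$ one has $E(u_{m}(t+r))\leq E(u_{m}(\tau))$, and integrating in $\tau$ yields the Dafermos-type bound $r\,E(u_{m}(t+r))\leq\int_{t}^{t+r}E(u_{m}(\tau))\,d\tau$. On the other hand, integrating (\ref{4}) for $u_{m}$ over $[t,t+r]$ and using $f(u)u\geq\alpha|u|^{4}-C_{2}$ with Young on $(h,u_{m})$ yields $\int_{t}^{t+r}\bigl(\|u_{m}\|_{H_{0}^{1}(\Omega)}^{2}+\|u_{m}\|_{L^{4}(\Omega)}^{4}\bigr)\,d\tau\leq C(\|u_{m}(t)\|^{2}+1+r)$; combined with $|F(u)|\leq D_{1}(1+|u|^{4})$ from (\ref{PropF}), this controls $\int_{t}^{t+r}E(u_{m})\,d\tau$ by the same quantity. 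Inserting (\ref{PropReg2}) for $\|u_{m}(t)\|^{2}$, together with the one-sided bound $\|u_{m}\|_{H_{0}^{1}(\Omega)}^{2}\leq E(u_{m})+C(1+\|u_{m}\|^{2})$ coming from $F(u)\geq\delta u^{4}-D_{2}$, yields (\ref{PropReg1}); the extra $e^{r}$ appears when the last step is closed by a uniform Gronwall lemma applied to the differential inequality for $\|u_{m}\|_{H_{0}^{1}(\Omega)}^{2}$ on the window $[t,t+r]$.

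The remaining estimates then follow quickly. Integrating the energy identity on $[r,T]$ gives $2\int_{r}^{T}\|u_{mt}\|^{2}\,dt=E(u_{m}(r))-E(u_{m}(T))$; bounding $E(u_{m}(r))$ by (\ref{PropReg1}) at time $r$ and $E(u_{m}(T))\geq -C$ from below produces (\ref{PropReg3}). For (\ref{PropReg4}), rewrite the equation as $-\Delta u_{m}=h-f(u_{m})-u_{mt}$ and use $|f(u)|^{2}\leq C(1+|u|^{6})$ together with the Sobolev embedding $H_{0}^{1}(\Omega)\hookrightarrow L^{6}(\Omega)$ to get
\[
\|\Delta u_{m}\|^{2}\leq C\bigl(1+\|h\|^{2}+\|u_{m}\|_{H_{0}^{1}(\Omega)}^{6}+\|u_{mt}\|^{2}\bigr);
\]
integrating in time and inserting the cube of (\ref{PropReg1}) together with (\ref{PropReg3}) produces the cubic factor and the $e^{3r}$. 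Passage to the limit $m\to\infty$ is carried out as in Lemma~\ref{lem:4}: on every $[r,T]\subset(0,\infty)$ the sequence $\{u_{m}\}$ is bounded in $L^{\infty}(r,T;H_{0}^{1}(\Omega))\cap L^{2}(r,T;D(A))$ and $\{u_{mt}\}$ in $L^{2}(r,T;L^{2}(\Omega))$, which combined with standard compactness is enough to pass to the limit in (\ref{Eq1}) and to transfer (\ref{PropReg1})--(\ref{PropReg4}) to $u$ by lower semicontinuity.

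Once (\ref{LInfH1})--(\ref{D(A)}) are in hand, the continuity (\ref{PropReg5}) is the standard embedding $L^{2}(\varepsilon,T;D(A))\cap H^{1}(\varepsilon,T;L^{2}(\Omega))\hookrightarrow C([\varepsilon,T];H_{0}^{1}(\Omega))$ invoked in \cite[p.102]{SellYou}, and the same regularity legitimises $\tfrac{d}{dt}\|u\|_{H_{0}^{1}(\Omega)}^{2}=2(\nabla u,\nabla u_{t})=2(-\Delta u,u_{t})$ a.e., which is (\ref{PropReg6}). Pairing (\ref{PropReg6}) with the equation and applying the chain rule for $F(u)$ gives $\tfrac{d}{dt}E(u)=-2\|u_{t}\|^{2}$ a.e.\ on $(0,\infty)$, whose integration from $s$ to $t$ is exactly (\ref{Energy}). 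The main obstacle is the Dafermos step that produces (\ref{PropReg1}): without it, the $H_{0}^{1}$ estimate at time $r$ would involve $\|u_{0}\|_{H_{0}^{1}(\Omega)}$, which is not controlled when $u_{0}\in L^{2}(\Omega)$, and it is precisely the monotonicity of $E$ that lets one trade a factor $1/r$ for the missing initial regularity.
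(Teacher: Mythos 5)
Your proposal follows essentially the same route as the paper: a priori estimates obtained by testing with $u$ and with $u_{t}$, control of the energy functional $E$ on windows $[t,t+r]$ so that the $H_{0}^{1}$ bound at time $t+r$ costs only a factor $1/r$, and a compactness passage to the limit. The two differences are minor. First, the paper does not run the Galerkin scheme directly from $u_{0}\in L^{2}(\Omega)$; it approximates $u_{0}$ by $u_{0}^{n}\in H_{0}^{1}(\Omega)$ and works with the solutions supplied by Lemma \ref{lem:4}, for which the chain rules (\ref{DerivH1}), (\ref{DerivF}) are already justified. Your direct Galerkin version is equally valid and slightly more economical. Second, where the paper invokes the uniform Gronwall lemma (Lemma \ref{UGL}) with $g\equiv1$ to pass from $\int_{t}^{t+r}E\,ds\leq C(e^{-\lambda_{1}t}\Vert u_{0}\Vert^{2}+1+r)$ to a pointwise bound, you observe that $E$ is nonincreasing, so $rE(u_{m}(t+r))\leq\int_{t}^{t+r}E\,ds$ already gives the pointwise bound, in fact without the factor $e^{r}$. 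This is correct and sharper; your closing remark that ``the extra $e^{r}$ appears when the last step is closed by a uniform Gronwall lemma'' is then superfluous, since you do not need uniform Gronwall at all. One small caution: in the step for (\ref{PropReg3}) you should bound $E(u_{m}(r))$ by your Dafermos estimate on $E$ itself rather than by (\ref{PropReg1}), since recovering $E$ from the $H_{0}^{1}$ norm alone would square the estimate through the $L^{4}$ term $(F(u_{m}(r)),1)$.

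The one real gap is that the theorem asserts (\ref{PropReg1})--(\ref{PropReg4}) for \emph{every} regular solution with $u(0)=u_{0}\in L^{2}(\Omega)$, whereas your argument derives them at the Galerkin level and transfers them only to the constructed limit by lower semicontinuity. To close this you must note, as the paper does explicitly, that the regularity (\ref{LInfH1})--(\ref{DerivL2}) together with (\ref{D(A)}) legitimises the multiplications by $u$ and by $u_{t}$ and the chain rules (\ref{DerivH1}), (\ref{DerivF}) directly for an arbitrary regular solution on $(\varepsilon,T)$, so that the same a priori estimates can be rerun on any such solution. You in fact use exactly this observation to obtain (\ref{PropReg6}) and (\ref{Energy}) for a general regular solution, so the missing step is a remark rather than new mathematics, but as written your proof only establishes the estimates for the solution you construct.
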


\begin{proof}
Let $v_{0}\in H_{0}^{1}\left(  \Omega\right)  $ be arbitrary. Then by Lemma
\ref{lem:4} there exists a solution $v\left(  \text{\textperiodcentered
}\right)  \in C\left(  [0,+\infty),H_{0}^{1}\left(  \Omega\right)  \right)  $
such that%
\begin{equation}
\Vert v(t)\Vert_{H_{0}^{1}(\Omega)}^{2}\leq C\left(  1+\Vert v_{0}\Vert
_{H_{0}^{1}(\Omega)}^{4}\right)  ,\forall t\geq0, \label{Acot1}%
\end{equation}%
\begin{equation}
\int\limits_{0}^{+\infty}\Vert v_{t}(s)\Vert^{2}ds\leq C\left(  1+\Vert
v_{0}\Vert_{H_{0}^{1}(\Omega)}^{4}\right)  . \label{Acot2}%
\end{equation}
It follows by (\ref{2}) that%
\begin{align}
\left\Vert f\left(  v\left(  t\right)  \right)  \right\Vert ^{2}dt  &
\leq\int_{\Omega}C_{1}\left(  1+\left\vert u\left(  t,x\right)  \right\vert
^{3}\right)  ^{2}dx\label{Acotf}\\
&  \leq K_{1}\left(  1+\left\Vert v\left(  t\right)  \right\Vert _{H_{0}%
^{1}\left(  \Omega\right)  }^{6}\right)  \leq K_{2}\left(  1+\Vert v_{0}%
\Vert_{H_{0}^{1}(\Omega)}^{12}\right)  .\nonumber
\end{align}
Hence, the equality $\Delta v=v_{t}+f\left(  v\right)  -h$ implies that $v\in
L_{loc}^{2}\left(  0,+\infty;D\left(  A\right)  \right)  $. Thus, by standard
results \cite[p.102]{SellYou}, we obtain that
\begin{equation}
\frac{d}{dt}\left\Vert v\right\Vert _{H_{0}^{1}\left(  \Omega\right)  }%
^{2}=2\left(  -\Delta v,v_{t}\right)  \text{ for a.a. }t>0. \label{DerivH1}%
\end{equation}
Also, it is not difficult to show by regularization that $\left(  F\left(
v\left(  t\right)  \right)  ,1\right)  $ is absolutely continuous and
\begin{equation}
\frac{d}{dt}\left(  F\left(  v\left(  t\right)  \right)  ,1\right)  =\left(
v_{t},f\left(  v\left(  t\right)  \right)  \right)  \text{ for a.a. }t>0.
\label{DerivF}%
\end{equation}

Let $u_{0}^{n}\in H_{0}^{1}\left(  \Omega\right)  $ be a sequence such that
$u_{0}^{n}\rightarrow u_{0}$ in $L^{2}\left(  \Omega\right)  $ and let
$u^{n}\left(  \text{\textperiodcentered}\right)  $ be a solution of (\ref{1})
with $u^{n}\left(  0\right)  =u_{0}^{n}$ satisfying (\ref{Acot1}%
)-(\ref{Acot2}). We multiply (\ref{1}) by $u_{t}^{n}$ and using (\ref{DerivH1}%
), (\ref{DerivF}) we obtain%
\begin{equation}
2\left\Vert u_{t}^{n}\right\Vert ^{2}+\frac{d}{dt}\left(  \left\Vert
u^{n}\right\Vert _{H_{0}^{1}\left(  \Omega\right)  }^{2}+2\left(  F\left(
u^{n}\right)  ,1\right)  -2\left(  h,u^{n}\right)  \right)  =0\text{ for a.a.
}t\in\left(  0,T\right)  . \label{Inequt}%
\end{equation}
On the other hand, multiplying (\ref{1}) by $u^{n}$ and using (\ref{2}) it is
standard to obtain that $u^{n}$ satisfy%
\begin{equation}
\frac{d}{dt}\left\Vert u^{n}\right\Vert ^{2}+\lambda_{1}\left\Vert
u^{n}\right\Vert ^{2}+\left\Vert u^{n}\right\Vert _{H_{0}^{1}\left(
\Omega\right)  }^{2}+\alpha\left\Vert u^{n}\right\Vert _{L^{4}\left(
\Omega\right)  }^{4}\leq K_{3}+\left\Vert h\right\Vert ^{2}. \label{IneqUn}%
\end{equation}
By Gronwall's lemma we obtain%
\begin{equation}
\left\Vert u^{n}\left(  t\right)  \right\Vert ^{2}\leq e^{-\lambda_{1}%
t}\left\Vert u_{0}^{n}\right\Vert ^{2}+\frac{1}{\lambda_{1}}\left(
K_{3}+\left\Vert h\right\Vert ^{2}\right)  . \label{IneqUn2}%
\end{equation}
Thus integrating (\ref{IneqUn}) over $\left(  t,t+r\right)  $ with $r>0$ we
have%
\begin{align}
&  \left\Vert u^{n}\left(  t+r\right)  \right\Vert ^{2}+\int_{t}%
^{t+r}\left\Vert u^{n}\right\Vert _{H_{0}^{1}\left(  \Omega\right)  }%
^{2}ds+\alpha\int_{t}^{t+r}\left\Vert u^{n}\right\Vert _{L^{4}\left(
\Omega\right)  }^{4}ds\label{IneqUnt+r}\\
&  \leq\left\Vert u^{n}\left(  t\right)  \right\Vert ^{2}+r\left(
K_{3}+\left\Vert h\right\Vert ^{2}\right) \nonumber\\
&  \leq e^{-\lambda_{1}t}\left\Vert u_{0}^{n}\right\Vert ^{2}+\left(  \frac
{1}{\lambda_{1}}+r\right)  \left(  K_{3}+\left\Vert h\right\Vert ^{2}\right)
.\nonumber
\end{align}
Then by (\ref{PropF}),%
\begin{align}
&  \int_{t}^{t+r}\left(  \left\Vert u^{n}\right\Vert _{H_{0}^{1}\left(
\Omega\right)  }^{2}+2\left(  F\left(  u^{n}\left(  s\right)  \right)
,1\right)  -2\left(  h,u^{n}\right)  \right)  ds\nonumber\\
&  \leq\int_{t}^{t+r}\left\Vert u^{n}\right\Vert _{H_{0}^{1}\left(
\Omega\right)  }^{2}ds+K_{4}\int_{t}^{t+r}\int_{\Omega}\left(  1+\left\vert
u^{n}\right\vert ^{4}\right)  dxds+r\left\Vert h\right\Vert ^{2}+\int%
_{t}^{t+r}\left\Vert u^{n}\right\Vert ^{2}ds\nonumber\\
&  \leq K_{5}\left(  e^{-\lambda_{1}t}\left\Vert u_{0}^{n}\right\Vert
^{2}+\left(  \frac{1}{\lambda_{1}}+r\right)  \left(  1+\left\Vert h\right\Vert
^{2}\right)  \right) \nonumber\\
&  \leq K_{6}(e^{-\lambda_{1}t}\left\Vert u_{0}^{n}\right\Vert ^{2}+r+1),
\label{IneqUGL1}%
\end{align}
for all $n$ and $t\geq0.$ Also, by (\ref{Inequt}), (\ref{PropF}) and
\[
-2\left(  h,u^{n}\right)  \geq-\frac{4}{\lambda_{1}}\left\Vert h\right\Vert
^{2}-\lambda_{1}\left\Vert u^{n}\right\Vert ^{2}%
\]
we obtain
\begin{align}
&  \frac{d}{dt}\left(  \left\Vert u^{n}\right\Vert _{H_{0}^{1}}^{2}+2\left(
F\left(  u^{n}\right)  ,1\right)  -2\left(  h,u^{n}\right)  \right)
\nonumber\\
&  \leq\left\Vert u^{n}\right\Vert _{H_{0}^{1}}^{2}-\lambda_{1}\left\Vert
u^{n}\right\Vert ^{2}\nonumber\\
&  \leq\left\Vert u^{n}\right\Vert _{H_{0}^{1}}^{2}+2\left(  F\left(
u^{n}\right)  ,1\right)  -2\left(  h,u^{n}\right)  +2\widetilde{D}_{2}%
+\frac{4}{\lambda_{1}}\left\Vert h\right\Vert ^{2}, \label{IneqUGL2}%
\end{align}
where $\widetilde{D}_{2}=\int_{\Omega}D_{2}dx.$

Recall the well known uniform Gronwall lemma \cite{Temam2}.

\begin{lemma}
\label{UGL}Let $g,w,y$ be three positive integrable functions on $\left(
t_{0},+\infty\right)  $ such that $y^{\prime}$ is locally integrable on
$\left(  t_{0},+\infty\right)  $ and such that
\[
\frac{dy}{dt}\leq gy+w\text{ if }t\geq t_{0},
\]%
\[
\int_{t}^{t+r}gds\leq a_{1},\ \int_{t}^{t+r}wds\leq a_{2},\ \int_{t}%
^{t+r}yds\leq a_{3}\text{ if }t\geq t_{0},
\]
where $a_{i}>0$. Then%
\[
y\left(  t+r\right)  \leq\left(  \frac{a_{3}}{r}+a_{2}\right)  e^{a_{1}}.
\]

\end{lemma}

We apply Lemma \ref{UGL} with $y\left(  s\right)  =\left\Vert u^{n}\left(
s\right)  \right\Vert _{H_{0}^{1}\left(  \Omega\right)  }^{2}+2\left(
F\left(  u^{n}\left(  s\right)  \right)  ,1\right)  -2\left(  h,u^{n}\left(
s\right)  \right)  +M$ (where $M>0$ is such that $y\left(  s\right)  >0$),
$g\left(  s\right)  \equiv1$ and $w\left(  s\right)  \equiv2\widetilde{D}%
_{2}+\frac{2}{\lambda_{1}}\left\Vert h\right\Vert ^{2}.$ Then%
\begin{align}
&  \left\Vert u^{n}\left(  t+r\right)  \right\Vert _{H_{0}^{1}\left(
\Omega\right)  }^{2}+2\left(  F\left(  u^{n}\left(  t+r\right)  \right)
,1\right)  -2\left(  h,u^{n}\left(  t+r\right)  \right) \label{AcotH1F}\\
&  \leq K_{7}(\frac{e^{-\lambda_{1}t}\left\Vert u_{0}^{n}\right\Vert ^{2}%
+1}{r}+1+r)e^{r}\text{ for all }0\leq t\leq t+r.\nonumber
\end{align}
Using (\ref{PropF}) and
\[
2\left(  h,u^{n}\left(  t+r\right)  \right)  \leq\frac{2}{\lambda_{1}%
}\left\Vert h\right\Vert ^{2}+\frac{1}{2}\left\Vert u^{n}\left(  t+r\right)
\right\Vert _{H_{0}^{1}\left(  \Omega\right)  }^{2},
\]
we have%
\begin{align}
&  \left\Vert u^{n}\left(  t+r\right)  \right\Vert _{H_{0}^{1}\left(
\Omega\right)  }^{2}\label{AcotH1}\\
&  \leq K_{8}\left(  (\frac{e^{-\lambda_{1}t}\left\Vert u_{0}^{n}\right\Vert
^{2}+1}{r}+1+r)e^{r}\right)  \text{ for all }0\leq t\leq t+r.\nonumber
\end{align}
Therefore, the sequence $u^{n}\left(  \text{\textperiodcentered}\right)  $ is
bounded in $L^{\infty}\left(  r,T;H_{0}^{1}\left(  \Omega\right)  \right)  $
for all $0<r<T$.

Integrating (\ref{Inequt}) over $\left(  r,T\right)  $ and using
(\ref{AcotH1F}), (\ref{PropF}) we have%
\begin{align}
&  2\int_{r}^{T}\left\Vert u_{t}^{n}\right\Vert ^{2}dt+\left\Vert u^{n}\left(
T\right)  \right\Vert _{H_{0}^{1}}^{2}\label{AcotDerivL2}\\
&  \leq\left\Vert u^{n}\left(  r\right)  \right\Vert _{H_{0}^{1}\left(
\Omega\right)  }^{2}+2\left(  F\left(  u^{n}\left(  r\right)  \right)
,1\right)  -2\left(  h,u^{n}(r)\right) \nonumber\\
&  -2\left(  F\left(  u^{n}\left(  T\right)  \right)  ,1\right)  +2\left(
h,u^{n}\left(  T\right)  \right) \nonumber\\
&  \leq K_{7}(\frac{\left\Vert u_{0}^{n}\right\Vert ^{2}+1}{r}+1+r)e^{r}%
+\frac{2}{\lambda_{1}}\left\Vert h\right\Vert ^{2}+\frac{\lambda_{1}}%
{2}\left\Vert u^{n}\left(  T\right)  \right\Vert ^{2}+\widetilde{R}\nonumber\\
&  \leq K_{9}\left(  \frac{\left\Vert u_{0}^{n}\right\Vert ^{2}+1}%
{r}+1+r\right)  e^{r}+\frac{1}{2}\left\Vert u^{n}\left(  T\right)  \right\Vert
_{H_{0}^{1}}^{2},\nonumber
\end{align}
so that $u_{t}^{n}$ is bounded in $L^{2}\left(  r,T;L^{2}\left(
\Omega\right)  \right)  $ for all $0<r<T$.

On the other hand, from (\ref{Acotf}) and (\ref{AcotH1}) we get
\begin{align*}
\int_{r}^{T}\int_{\Omega}\left\vert f\left(  u^{n}\right)  \right\vert
^{2}dxdt  &  \leq K_{10}\int_{r}^{T}\left(  1+\left\Vert u^{n}\left(
t\right)  \right\Vert _{H_{0}^{1}\left(  \Omega\right)  }^{6}\right)  dt\\
&  \leq K_{11}\left(  T-r\right)  \left(  \frac{\left\Vert u_{0}%
^{n}\right\Vert ^{2}+1}{r}+1+r\right)  ^{3}e^{3r}.
\end{align*}
Then the equality $\Delta u^{n}=u_{t}^{n}+f\left(  u^{n}\right)  -h$ implies
that
\begin{equation}
\int_{r}^{T}\left\Vert \Delta u^{n}\right\Vert ^{2}dt\leq K_{12}\left(
T-r+1\right)  \left(  \frac{\left\Vert u_{0}^{n}\right\Vert ^{2}+1}%
{r}+1+r\right)  ^{3}e^{3r}, \label{AcotLapl}%
\end{equation}
ans then $u^{n}$ is bounded in $L^{2}\left(  r,T;D\left(  A\right)  \right)  $
for all $0<r<T$.

We note also that the compact embedding $H_{0}^{1}(\Omega)\subset L^{2}\left(
\Omega\right)  $ implies that for any $t>0$ the sequence $u^{n}\left(
t\right)  $ is precompact in $L^{2}\left(  \Omega\right)  $. Hence, applying
the Ascoli-Arzel\`{a} theorem we obtain, passing to a subsequence and using a
diagonal argument, that there exists a function $u:[0,+\infty)\rightarrow
L^{2}\left(  \Omega\right)  $ such that for all $0<r<T,$%
\begin{align}
u^{n}  &  \rightarrow u\text{ weakly star in }L^{\infty}\left(  r,T;H_{0}%
^{1}\left(  \Omega\right)  \right)  ,\label{ConvergRegular}\\
u^{n}  &  \rightarrow u\text{ in }C([r,T],L^{2}\left(  \Omega\right)
),\nonumber\\
u^{n}  &  \rightarrow u\text{ weakly in }L^{2}\left(  r,T;D\left(  A\right)
\right)  ,\nonumber\\
u_{t}^{n}  &  \rightarrow u_{t}\text{ weakly in }L^{2}\left(  r,T;L^{2}\left(
\Omega\right)  \right)  .\nonumber
\end{align}
Also, by a standard argument we obtain that for any sequence $t_{n}\rightarrow
t_{0}>0$ we have%
\begin{equation}
u^{n}\left(  t_{n}\right)  \rightarrow u\left(  t_{0}\right)  \text{ weakly in
}H_{0}^{1}\left(  \Omega\right)  . \label{ConvergRegular2}%
\end{equation}
On the other hand, by (\ref{IneqUn}) $u^{n}$ is bounded in $L^{\infty}\left(
0,T;L^{2}\left(  \Omega\right)  \right)  \cap L^{2}\left(  0,T;H_{0}%
^{1}\left(  \Omega\right)  \right)  \cap L^{4}\left(  0,T;L^{4}\left(
\Omega\right)  \right)  $ for all $T>0,$ and by (\ref{1}) and (\ref{2})
$u_{t}^{n}$ is bounded in $L^{2}\left(  0,T;H^{-1}\left(  \Omega\right)
\right)  +L^{\frac{4}{3}}\left(  0,T;L^{\frac{4}{3}}\left(  \Omega\right)
\right)  $ for all $T>0$. We note that $H_{0}^{1}\left(  \Omega\right)
\subset L^{4}\left(  \Omega\right)  \subset L^{\frac{4}{3}}\left(
\Omega\right)  \subset H^{-1}\left(  \Omega\right)  $ with continuous
embeddings, the first one being compact. Hence, by the Compactness Theorem
\cite{Lions},
\begin{align}
u^{n}  &  \rightarrow u\text{ weakly star in }L^{\infty}\left(  0,T;L^{2}%
\left(  \Omega\right)  \right)  ,\label{Converg}\\
u^{n}  &  \rightarrow u\text{ weakly in }L^{2}\left(  0,T;H_{0}^{1}\left(
\Omega\right)  \right)  ,\nonumber\\
u^{n}  &  \rightarrow u\text{ weakly in }L^{4}\left(  0,T;L^{4}\left(
\Omega\right)  \right)  ,\nonumber\\
u^{n}  &  \rightarrow u\text{ strongly in }L^{2}\left(  0,T;L^{2}%
(\Omega)\right)  ,\nonumber\\
u_{t}^{n}  &  \rightarrow u_{t}\text{ weakly in }L^{\frac{4}{3}}\left(
0,T;H^{-1}\left(  \Omega\right)  \right)  ,\text{ for all }T>0.\nonumber
\end{align}
Again by the Ascoli-Arzel\`{a} theorem we have that
\[
u^{n}\rightarrow u\text{ in }C([0,T],H^{-1}\left(  \Omega\right)  )\text{ for
all }T>0,
\]
and for any sequence $t_{n}\rightarrow t_{0}\geq0,$
\begin{equation}
u^{n}\left(  t_{n}\right)  \rightarrow u\left(  t_{0}\right)  \text{ weakly in
}L^{2}\left(  \Omega\right)  . \label{Converg2}%
\end{equation}
The last convergence implies that $u\left(  0\right)  =u_{0}.$

From the boundedness of $u^{n}$ in $L^{4}\left(  0,T;L^{4}\left(
\Omega\right)  \right)  $ and (\ref{2}) we have that $f\left(  u^{n}\right)  $
is bounded in $L^{\frac{4}{3}}\left(  0,T;L^{\frac{4}{3}}\left(
\Omega\right)  \right)  $. Since $u^{n}\left(  t,x\right)  \rightarrow
u\left(  t,x\right)  $, we have $f\left(  u^{n}\left(  t,x\right)  \right)
\rightarrow f(u\left(  t,x\right)  )$ for a.a. $\left(  t,x\right)  $, and
then $f\left(  u^{n}\right)  \rightarrow f\left(  u\right)  $ weakly in
$L^{\frac{4}{3}}\left(  0,T;L^{\frac{4}{3}}\left(  \Omega\right)  \right)  $
(see \cite[p.12]{Lions}).

In a standard way we can check then that $u\left(  \text{\textperiodcentered
}\right)  $ is a weak solution of (\ref{1}). Moreover, by the previous
arguments it is clear that $u$ is a regular solution.

Finally, we note that in view of (\ref{DerivL2}) and (\ref{D(A)}) all the
previous arguments leading to (\ref{IneqUn2}), (\ref{AcotH1}),
(\ref{AcotDerivL2}) and (\ref{AcotLapl}) are correct for any regular solution
with initial value in $L^{2}\left(  \Omega\right)  $. Thus, (\ref{PropReg1}%
)-(\ref{PropReg4}) follow. By \cite[p.102]{SellYou} we obtain that
(\ref{PropReg5})-(\ref{PropReg6}) hold. The energy equality (\ref{Energy})
follows from (\ref{Inequt}).
\end{proof}

\bigskip

Let
\[
K_{r}^{+}=\{u\left(  \text{\textperiodcentered}\right)  :u\text{ is a regular
solution of (\ref{1})}\}\text{.}%
\]
We define now the map $G_{r}:\mathbb{R}^{+}\times L^{2}\left(  \Omega\right)
\rightarrow P\left(  L^{2}\left(  \Omega\right)  \right)  $ by
\[
G_{r}(t,u_{0})=\{u\left(  t\right)  :u\in K_{r}^{+}\text{ and }u\left(
0\right)  =u_{0}\}.
\]
The set $K_{r}^{+}$ satisfies conditions $\left(  K1\right)  -\left(
K2\right)  $, so that by Lemma \ref{MS} $G_{r}$ is a multivalued semiflow.

\bigskip

\begin{remark}
In this case we are not able to prove that the semiflow $G_{r}$ is strict. The
reason is that if we take $u_{1},u_{2}\in K_{r}^{+}$ with $u_{2}\left(
0\right)  =u_{1}\left(  s\right)  $ and concatenate them, that is,%
\[
u\left(  t\right)  =\left\{
\begin{array}
[c]{c}%
u_{1}\left(  t\right)  \text{ if }0\leq t\leq s,\\
u_{2}\left(  t-s\right)  \text{ if }s\leq t,
\end{array}
\right.
\]
then $u$ is a weak solution of (\ref{1}), but we cannot state that it is
regular, as properties (\ref{LInfH1})-(\ref{DerivL2}) can fail now. Hence,
condition $\left(  K3\right)  $ is not known to be true.
\end{remark}

We shall obtain further some properties of the semiflow $G_{r}.$

\begin{lemma}
\label{ContLemma}Assume that (\ref{2}) holds. Let $\{u^{n}\}\subset K_{r}^{+}$
be a sequence such that $u^{n}(0)\rightarrow u_{0}$ weakly in $L^{2}(\Omega)$.
Then there exists a subsequence (denoted again by $u^{n}$), and a regular
solution of (\ref{1}) $u\in K_{r}^{+}$ satisfying $u(0)=u_{0},$ such that for
any sequence of times $t_{n}\geq0$ such that $t_{n}\rightarrow t_{0}$ we have
$u^{n}(t_{n})\rightarrow u(t_{0})$ weakly in $L^{2}(\Omega)$. \newline Also,
if $t_{0}>0$, then $u^{n}(t_{n})\rightarrow u(t_{0})$ strongly in $H_{0}%
^{1}\left(  \Omega\right)  $. \newline Moreover, if $u^{n}(0)\rightarrow
u_{0}$ strongly in $L^{2}(\Omega)$, then for $t_{n}\searrow0$ we get
$u^{n}(t_{n})\rightarrow u_{0}$ strongly in $L^{2}(\Omega)$.
\end{lemma}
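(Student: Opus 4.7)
The plan is to reproduce the compactness arguments of Theorem~\ref{ExistenceRegSol}, exploiting that $u^{n}(0)$ is bounded in $L^{2}(\Omega)$. Estimates (\ref{PropReg1})--(\ref{PropReg4}) then yield uniform bounds on $u^{n}$ in $L^{\infty}(r,T;H_{0}^{1}(\Omega))\cap L^{2}(r,T;D(A))$ and on $u_{t}^{n}$ in $L^{2}(r,T;L^{2}(\Omega))$ for every $0<r<T$, as well as the global bounds in $L^{\infty}(0,T;L^{2})\cap L^{2}(0,T;H_{0}^{1})\cap L^{4}(0,T;L^{4})$ and on $u_{t}^{n}$ in $L^{4/3}(0,T;H^{-1}(\Omega))$. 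A diagonal extraction produces a subsequence and a limit $u$ enjoying convergences of the same type as (\ref{ConvergRegular})--(\ref{Converg}), and the standard identification argument shows $u\in K_{r}^{+}$. Ascoli-Arzel\`{a} combined with the compact embedding $L^{2}(\Omega)\subset H^{-1}(\Omega)$ gives $u^{n}\to u$ in $C([0,T];H^{-1}(\Omega))$. Hence $u^{n}(t_{n})\to u(t_{0})$ in $H^{-1}(\Omega)$ whenever $t_{n}\to t_{0}$, which combined with the $L^{\infty}(0,T;L^{2})$-bound gives $u^{n}(t_{n})\rightharpoonup u(t_{0})$ weakly in $L^{2}(\Omega)$; in particular $u(0)=u_{0}$.

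For the strong $H_{0}^{1}$-convergence when $t_{0}>0$ the key tool is the energy equality (\ref{Energy}). The $L^{\infty}(r,T;H_{0}^{1})$-bound upgrades the weak $L^{2}$-convergence to weak $H_{0}^{1}$-convergence $u^{n}(t_{n})\rightharpoonup u(t_{0})$; the compact embedding $H_{0}^{1}\subset L^{p}$ for $p<6$ together with the growth (\ref{PropF}) gives $(F(u^{n}(t_{n})),1)\to(F(u(t_{0})),1)$ and $(h,u^{n}(t_{n}))\to(h,u(t_{0}))$, so strong $H_{0}^{1}$-convergence reduces to $E(u^{n}(t_{n}))\to E(u(t_{0}))$. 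Aubin-Lions on $(r,T)$ with the triple $D(A)\subset H_{0}^{1}\subset L^{2}$ yields $u^{n}\to u$ strongly in $L^{2}(r,T;H_{0}^{1})$, so one can pick $s\in(0,t_{0})$ in the full-measure set where $u^{n}(s)\to u(s)$ in $H_{0}^{1}$; then $E(u^{n}(s))\to E(u(s))$. From $u_{t}^{n}\rightharpoonup u_{t}$ in $L^{2}(s,T;L^{2})$ and $\chi_{[s,t_{n}]}\to\chi_{[s,t_{0}]}$ in $L^{2}(s,T)$ one deduces $u_{t}^{n}\chi_{[s,t_{n}]}\rightharpoonup u_{t}\chi_{[s,t_{0}]}$ weakly in $L^{2}(s,T;L^{2})$, and weak lower semicontinuity of the norm gives
\[
\int_{s}^{t_{0}}\|u_{r}\|^{2}\,dr\;\le\;\liminf_{n\to\infty}\int_{s}^{t_{n}}\|u_{r}^{n}\|^{2}\,dr.
\]
Writing (\ref{Energy}) for $u^{n}$ between $s$ and $t_{n}$ and for $u$ between $s$ and $t_{0}$ gives $\limsup E(u^{n}(t_{n}))\le E(u(t_{0}))$; combined with $\liminf E(u^{n}(t_{n}))\ge E(u(t_{0}))$ (from weak $H_{0}^{1}$-convergence and strong convergence of the lower-order terms) this forces the required $E$-convergence.

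The final assertion follows by integrating the weak-solution energy identity (\ref{4}) over $(0,t_{n})$ and using $f(u)u\ge -C_{2}$ together with the uniform bound $\|u^{n}(s)\|\le M$ coming from (\ref{PropReg2}):
\[
\|u^{n}(t_{n})\|^{2}\;\le\;\|u^{n}(0)\|^{2}+2C_{2}|\Omega|\,t_{n}+2\|h\|\,M\,t_{n}.
\]
Passing to the limit yields $\limsup\|u^{n}(t_{n})\|^{2}\le\|u_{0}\|^{2}$, which together with the weak convergence $u^{n}(t_{n})\rightharpoonup u_{0}$ obtained in the first paragraph forces strong $L^{2}$-convergence. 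The principal obstacle is the strong $H_{0}^{1}$-step: since regular solutions are only known to lie in $L^{2}(r,T;D(A))$ and not in $L^{\infty}(r,T;D(A))$, Aubin-Lions cannot directly deliver compactness in $C([r,T];H_{0}^{1})$, and the energy equality (\ref{Energy}) is precisely the device that promotes strong convergence at almost every $s$ to strong convergence at the prescribed interior time $t_{0}$.
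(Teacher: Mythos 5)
Your proof is correct, and its first and third parts coincide with the paper's: both extract the subsequence and the limit $u\in K_{r}^{+}$ by rerunning the compactness machinery of Theorem \ref{ExistenceRegSol}, and both obtain the strong $L^{2}$-convergence as $t_{n}\searrow 0$ from the differential inequality for $\Vert u^{n}\Vert^{2}$ (you integrate (\ref{4}) directly; the paper phrases it as monotonicity of $\Vert u^{n}(t)\Vert^{2}-Ct$ — same estimate). Where you genuinely diverge is the strong $H_{0}^{1}$-convergence at an interior time $t_{0}>0$. The paper multiplies the equation by $u_{t}^{n}$, discards the dissipation term, and deduces that $t\mapsto\Vert u^{n}(t)\Vert_{H_{0}^{1}}^{2}-Ct$ is continuous and non-increasing; combined with a.e.\ convergence of these functions (from $u^{n}\to u$ in $L^{2}(r,T;H_{0}^{1})$) and monotonicity, this gives $\limsup\Vert u^{n}(t_{n})\Vert_{H_{0}^{1}}^{2}\leq\Vert u(t_{0})\Vert_{H_{0}^{1}}^{2}$ without ever invoking the energy \emph{equality}. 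You instead run the Ball-type energy method: keep the dissipation term, use the exact identity (\ref{Energy}) between an a.e.\ chosen $s<t_{0}$ and $t_{n}$, and control $\int_{s}^{t_{n}}\Vert u_{r}^{n}\Vert^{2}dr$ from below by weak lower semicontinuity, after checking that the lower-order terms $(F(\cdot),1)$ and $(h,\cdot)$ pass to the limit via the compact embedding $H_{0}^{1}\subset L^{p}$, $p<6$, and Vitali. Both routes are valid here because regular solutions satisfy (\ref{Energy}); the paper's argument is slightly more economical (it needs only the inequality version of the energy relation and no continuity of $E$), while yours is the one that generalizes to situations where only an energy equality, and no pointwise differential inequality for the $H_{0}^{1}$-norm, is available — indeed it is essentially the mechanism the paper itself later exploits in Theorem \ref{StructureReg}. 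Two small points to keep tidy: the passage from $u^{n}\to u$ in $L^{2}(r,T;H_{0}^{1})$ to strong convergence at a fixed a.e.\ time $s$ requires a further subsequence, so you should close the argument with the standard sub-subsequence remark (the limit $E(u(t_{0}))$ being independent of the extraction); and the choice of $s$ must be made with $0<s<t_{0}$ so that (\ref{Energy}) applies and $t_{n}\geq s$ eventually, which your setup does ensure.
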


\begin{proof}
Arguing as in the proof of Theorem \ref{ExistenceRegSol} we obtain the
existence of a subsequence of $u^{n}$ and a weak solution $u$ of (\ref{1})
with $u\left(  0\right)  =u_{0}$ such that the convergences
(\ref{ConvergRegular}), (\ref{ConvergRegular2}), (\ref{Converg}),
(\ref{Converg2}) hold. Hence, $u\in K_{r}^{+}.$

It follows that if $t_{0}>0$ and $t_{n}\rightarrow t_{0}$, then $u^{n}%
(t_{n})\rightarrow u(t_{0})$ strongly in $L^{2}(\Omega)$ and weakly in
$H_{0}^{1}\left(  \Omega\right)  $. We shall prove that $u^{n}(t_{n}%
)\rightarrow u(t_{0})$ strongly in $H_{0}^{1}\left(  \Omega\right)  $. Let
$t_{n},t_{0}\in(r,T)$. Multiplying (\ref{1}) by $u_{t}^{n}$ and using
(\ref{DerivH1}) we obtain
\begin{align*}
\frac{1}{2}\left\Vert u_{t}^{n}\right\Vert ^{2}+\frac{1}{2}\frac{d}%
{dt}\left\Vert u^{n}\right\Vert _{H_{0}^{1}\left(  \Omega\right)  }^{2}  &
\leq\left\Vert f\left(  u^{n}\right)  \right\Vert ^{2}+\left\Vert h\right\Vert
^{2},\\
\frac{1}{2}\left\Vert u_{t}\right\Vert ^{2}+\frac{1}{2}\frac{d}{dt}\left\Vert
u\right\Vert _{H_{0}^{1}\left(  \Omega\right)  }^{2}  &  \leq\left\Vert
f\left(  u\right)  \right\Vert ^{2}+\left\Vert h\right\Vert ^{2},
\end{align*}
and then by (\ref{2}) and (\ref{PropReg1}) we obtain
\begin{align*}
\left\Vert u^{n}\left(  t\right)  \right\Vert _{H_{0}^{1}\left(
\Omega\right)  }^{2}  &  \leq\left\Vert u^{n}\left(  s\right)  \right\Vert
_{H_{0}^{1}\left(  \Omega\right)  }^{2}+C\left(  t-s\right)  ,\\
\left\Vert u\left(  t\right)  \right\Vert _{H_{0}^{1}\left(  \Omega\right)
}^{2}  &  \leq\left\Vert u\left(  s\right)  \right\Vert _{H_{0}^{1}\left(
\Omega\right)  }^{2}+C\left(  t-s\right)  ,\text{ for all }r\leq s\leq t\leq
T,
\end{align*}
for some $C>0.$ Therefore the functions $J_{n}(t)=\left\Vert u^{n}\left(
s\right)  \right\Vert _{H_{0}^{1}\left(  \Omega\right)  }^{2}+Ct$,
$J(t)=\left\Vert u\left(  s\right)  \right\Vert _{H_{0}^{1}\left(
\Omega\right)  }^{2}+Ct$, are continuous and non-increasing in $[r,T]$.
Moreover, (\ref{ConvergRegular}) and the Compactness Theorem \cite{Lions}
imply that $J_{n}\left(  t\right)  \rightarrow J\left(  t\right)  $ for a.a.
$t\in\left(  r,T\right)  $. Take $r<t_{m}<t_{0}$ such that $t_{m}\rightarrow
t_{0}$ and $J_{n}\left(  t_{m}\right)  \rightarrow J_{n}\left(  t_{m}\right)
$ for all $m$. Then%
\[
J_{n}\left(  t_{n}\right)  -J\left(  t_{0}\right)  \leq J_{n}\left(
t_{m}\right)  -J\left(  t_{0}\right)  \leq\left\vert J_{n}\left(
t_{m}\right)  -J\left(  t_{m}\right)  \right\vert +\left\vert J\left(
t_{m}\right)  -J\left(  t_{0}\right)  \right\vert ,
\]
if $t_{n}\geq t_{m}$, so that for any $\varepsilon>0$ there exist $m\left(
\varepsilon\right)  $ and $N\left(  m\right)  $ such that $J_{n}\left(
t_{0}\right)  -J\left(  t_{0}\right)  \leq\varepsilon$ if $n\geq N$. Then
$\lim\ \sup J\left(  t_{n}\right)  \leq\lim\ \sup J\left(  t_{0}\right)  $, so
that $\lim\ \sup\left\Vert u^{n}\left(  t\right)  \right\Vert _{H_{0}%
^{1}\left(  \Omega\right)  }^{2}\leq\left\Vert u\left(  t\right)  \right\Vert
_{H_{0}^{1}\left(  \Omega\right)  }^{2}$. As $u^{n}(t_{n})\rightarrow
u(t_{0})$ weakly in $H_{0}^{1}\left(  \Omega\right)  $ implies $\lim
\ \inf\left\Vert u^{n}\left(  t_{n}\right)  \right\Vert _{H_{0}^{1}\left(
\Omega\right)  }^{2}\geq\left\Vert u\left(  t\right)  \right\Vert _{H_{0}%
^{1}\left(  \Omega\right)  }^{2}$, we obtain
\[
\left\Vert u^{n}\left(  t_{n}\right)  \right\Vert _{H_{0}^{1}\left(
\Omega\right)  }^{2}\rightarrow\left\Vert u\left(  t\right)  \right\Vert
_{H_{0}^{1}\left(  \Omega\right)  }^{2}\text{, }%
\]
so that $u^{n}(t_{n})\rightarrow u(t_{0})$ strongly in $H_{0}^{1}\left(
\Omega\right)  $.

Finally, let $u^{n}(0)\rightarrow u_{0}$ strongly in $L^{2}(\Omega)$. In view
of (\ref{IneqUn}) we have for some $C>0,$%
\begin{align*}
\frac{d}{dt}\left\Vert u^{n}\right\Vert ^{2}  &  \leq C,\\
\frac{d}{dt}\left\Vert u\right\Vert ^{2}  &  \leq C,\text{ for all }t\geq0,
\end{align*}
so that the functions $\overline{J}_{n}(t)=\left\Vert u^{n}\left(  s\right)
\right\Vert ^{2}+Ct$, $\overline{J}(t)=\left\Vert u\left(  s\right)
\right\Vert ^{2}+Ct$, are continuous and non-increasing for $t\geq0$. Hence,%
\[
\overline{J}_{n}(t_{n})-\overline{J}(0)\leq\overline{J}_{n}(0)-\overline
{J}(0)\rightarrow0,
\]
so that $\lim\ \sup\overline{J}\left(  t_{n}\right)  \leq\lim\ \sup
\overline{J}\left(  0\right)  $ and by the same argument as before we obtain
that $u^{n}(t_{n})\rightarrow u_{0}$ strongly in $L^{2}(\Omega)$.
\end{proof}

\bigskip

\begin{corollary}
\label{K4Regular}If (\ref{2}) holds, then $K_{r}^{+}$ satisfies condition
$\left(  K4\right)  .$
\end{corollary}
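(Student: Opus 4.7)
The plan is to derive Corollary~\ref{K4Regular} as a direct consequence of Lemma~\ref{ContLemma}, which already contains all the substantive work; the task reduces to reading off the right mode of convergence at each fixed time. Given a sequence $\{u^n\}\subset K_r^+$ with $u^n(0)\to u_0$ strongly in $L^2(\Omega)$, I would first apply Lemma~\ref{ContLemma} to extract a subsequence (still denoted $u^n$) together with a regular solution $u\in K_r^+$ satisfying $u(0)=u_0$, such that $u^n(t_n)\to u(t_0)$ weakly in $L^2(\Omega)$ whenever $t_n\to t_0\geq 0$, and strongly in $H_0^1(\Omega)$ whenever $t_0>0$.

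Next I would verify pointwise strong $L^2$ convergence for every fixed $t\geq 0$, which is precisely what $(K4)$ (with $X=L^2(\Omega)$) demands. For $t>0$, applying the conclusion of Lemma~\ref{ContLemma} to the constant sequence $t_n\equiv t$ gives $u^n(t)\to u(t)$ strongly in $H_0^1(\Omega)$, hence strongly in $L^2(\Omega)$ by the continuous embedding. For $t=0$, the required strong convergence $u^n(0)\to u(0)=u_0$ in $L^2(\Omega)$ is exactly the hypothesis on the initial data. Combining the two cases yields
\[
u^n(t)\to u(t)\text{ in }L^2(\Omega)\text{ for every }t\geq 0,
\]
which is condition $(K4)$ for $K_r^+$.

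There is no real obstacle beyond invoking the lemma correctly: the genuinely hard step—building the limiting regular solution via the Galerkin/diagonal compactness argument and passing to the limit using the uniform estimates (\ref{PropReg1})--(\ref{PropReg4}) from Theorem~\ref{ExistenceRegSol}—was discharged inside Lemma~\ref{ContLemma}. The only subtlety worth flagging is that $(K4)$ insists on convergence for \emph{every} $t\geq 0$, including $t=0$, so it is essential that the hypothesis provides \emph{strong} (not merely weak) convergence of initial data in $L^2(\Omega)$; this matches exactly the regime covered by the last assertion of Lemma~\ref{ContLemma}.
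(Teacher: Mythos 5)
Your argument is correct and is exactly how the paper intends the corollary to be read: the paper states it without proof as an immediate consequence of Lemma~\ref{ContLemma}, and your case split ($t>0$ via the constant sequence $t_n\equiv t$ and the strong $H_0^1(\Omega)$ convergence, $t=0$ via the hypothesis on the initial data) fills in precisely the intended reasoning. No gaps.
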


By standar arguments from Lemma \ref{ContLemma} the following result follows.

\begin{corollary}
\label{USC}Let (\ref{2}) hold. Then the multivalued semiflow $G_{r}$ has
compact values and the map $u_{0}\mapsto G_{r}\left(  t,u_{0}\right)  $ is
upper semicontinuous for all $t\geq0$, that is, for any neighborhood $O\left(
G_{r}\left(  t,u_{0}\right)  \right)  $ in $L^{2}\left(  \Omega\right)  $
there exists $\delta>0$ such that if $\left\Vert u_{0}-v_{0}\right\Vert
<\delta$, then $G_{r}\left(  t,v_{0}\right)  \subset O.$
\end{corollary}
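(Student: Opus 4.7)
The plan is to extract both statements directly from Lemma \ref{ContLemma} by an essentially mechanical argument, taking advantage of the strong convergence conclusions that Lemma provides.

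For the compactness of values, I would fix $t \geq 0$ and $u_0 \in L^2(\Omega)$, and take an arbitrary sequence $y_n \in G_r(t,u_0)$. By definition of $G_r$ there exist regular solutions $u^n \in K_r^+$ with $u^n(0) = u_0$ and $u^n(t) = y_n$. I then apply Lemma \ref{ContLemma} with the constant sequence $u^n(0) = u_0$ (hence converging strongly in $L^2(\Omega)$) and the constant sequence of times $t_n \equiv t$. Passing to a subsequence, there is a regular solution $u \in K_r^+$ with $u(0) = u_0$ such that $u^n(t) \to u(t)$ strongly in $H_0^1(\Omega)$ when $t > 0$, and strongly in $L^2(\Omega)$ when $t = 0$ (this last case is actually trivial since $G_r(0,u_0) = \{u_0\}$). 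In either case $y_n \to u(t)$ in $L^2(\Omega)$ with $u(t) \in G_r(t,u_0)$, so $G_r(t,u_0)$ is compact.

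For upper semicontinuity, the natural approach is a contradiction argument. Suppose $u_0 \mapsto G_r(t,u_0)$ is not upper semicontinuous at some $u_0$. Then there exist an open neighborhood $O$ of $G_r(t,u_0)$ in $L^2(\Omega)$, a sequence $v_0^n \to u_0$ strongly in $L^2(\Omega)$, and points $y_n \in G_r(t,v_0^n)$ with $y_n \notin O$. Choose $u^n \in K_r^+$ with $u^n(0) = v_0^n$ and $u^n(t) = y_n$. Applying Lemma \ref{ContLemma} with $t_n \equiv t$ again, a subsequence satisfies $u^n(t) \to u(t)$ strongly in $L^2(\Omega)$, where $u \in K_r^+$ and $u(0) = u_0$. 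Hence $u(t) \in G_r(t,u_0) \subset O$, but since $O$ is open and $y_n = u^n(t) \to u(t)$ in $L^2(\Omega)$, we must have $y_n \in O$ for large $n$, a contradiction.

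Neither step presents any real obstacle, because Lemma \ref{ContLemma} has already done the heavy lifting: it bundles together the Galerkin-type compactness, passage to the limit in the nonlinearity, and the strong $H_0^1$ convergence at positive times. The only small point to be careful about is distinguishing the cases $t>0$ and $t=0$ in the compactness argument, and making sure in both parts to invoke the strong-$L^2$ convergence statement of Lemma \ref{ContLemma} (which requires the initial data to converge strongly in $L^2(\Omega)$, a hypothesis that is clearly satisfied in both situations above).
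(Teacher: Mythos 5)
Your argument is correct and is precisely the ``standard argument from Lemma \ref{ContLemma}'' that the paper invokes without writing out: compactness of values and upper semicontinuity both follow by applying that lemma to a sequence of regular solutions with converging initial data and the constant sequence of times $t_n\equiv t$, using the strong convergence at $t>0$ (and the triviality of the case $t=0$). Nothing further is needed.
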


\begin{lemma}
\label{AbsBall}Let (\ref{2}) hold. Then the ball
\[
B_{0}=\{u\in H_{0}^{1}\left(  \Omega\right)  :\left\Vert u\right\Vert
_{H_{0}^{1}\left(  \Omega\right)  }^{2}\leq4eR_{1}\},
\]
where $R_{1}>0$ is taken from (\ref{PropReg1}), is a compact absorbing\ for
$G_{r}$, that is, for any set $B$ bounded in $L^{2}\left(  \Omega\right)  $
there exists $T(B)$ such that%
\[
G_{r}\left(  t,B\right)  \subset B_{0}\text{ for }t\geq T\left(  B\right)  .
\]

\end{lemma}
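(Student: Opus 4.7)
The plan is to read off the absorption time directly from the estimate (\ref{PropReg1}) with a well-chosen value of the parameter $r$, and then derive compactness from the Rellich embedding $H_{0}^{1}(\Omega)\hookrightarrow L^{2}(\Omega)$.

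First I would specialise (\ref{PropReg1}) by taking $r=1$. This gives, for any regular solution with $u(0)=u_{0}\in L^{2}(\Omega)$ and any $t\geq 0$,
\[
\|u(t+1)\|_{H_{0}^{1}(\Omega)}^{2}\;\leq\;R_{1}\bigl(e^{-\lambda_{1}t}\|u_{0}\|^{2}+1+1+1\bigr)e
\;=\;R_{1}e\bigl(e^{-\lambda_{1}t}\|u_{0}\|^{2}+3\bigr).
\]
The constant in the definition of $B_{0}$ has been tuned precisely so that $R_{1}e(e^{-\lambda_{1}t}\|u_{0}\|^{2}+3)\leq 4eR_{1}$ as soon as $e^{-\lambda_{1}t}\|u_{0}\|^{2}\leq 1$. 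Given a bounded set $B\subset L^{2}(\Omega)$ with $\sup_{u_{0}\in B}\|u_{0}\|^{2}=:M<\infty$, I would therefore set
\[
T(B):=1+\tfrac{1}{\lambda_{1}}\ln\max(M,1),
\]
and conclude that for every $t\geq T(B)$, every $u_{0}\in B$ and every element $u(t)\in G_{r}(t,u_{0})$ we have $\|u(t)\|_{H_{0}^{1}(\Omega)}^{2}\leq 4eR_{1}$, i.e.\ $G_{r}(t,B)\subset B_{0}$.

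For compactness, I would argue that the ball $B_{0}$, being a closed ball of $H_{0}^{1}(\Omega)$, is weakly closed there; together with the lower semicontinuity of $\|\cdot\|_{H_{0}^{1}(\Omega)}$ with respect to $L^{2}$-convergence, this shows $B_{0}$ is closed in $L^{2}(\Omega)$. Since $H_{0}^{1}(\Omega)$ embeds compactly into $L^{2}(\Omega)$, the bounded set $B_{0}$ is precompact in $L^{2}(\Omega)$, hence compact.

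There is no real obstacle here: the only mild point is matching the numerical constant $4eR_{1}$ in the statement with the right-hand side of (\ref{PropReg1}), which dictates the choice $r=1$ and the condition $e^{-\lambda_{1}t}\|u_{0}\|^{2}\leq 1$. The essential work has already been done in Theorem \ref{ExistenceRegSol}, where the uniform bound (\ref{PropReg1}), valid for every regular solution issued from $L^{2}(\Omega)$ (not only for those selected in the existence proof), was established.
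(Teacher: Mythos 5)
Your proposal is correct and follows exactly the paper's argument: absorption is read off from (\ref{PropReg1}) with $r=1$ (the constant $4eR_{1}$ being chosen to make $e^{-\lambda_{1}t}\Vert u_{0}\Vert^{2}\leq1$ sufficient), and compactness in $L^{2}(\Omega)$ follows because $B_{0}$ is closed in $L^{2}(\Omega)$ and bounded in $H_{0}^{1}(\Omega)$. You merely spell out the choice of $T(B)$ and the closedness argument more explicitly than the paper does.
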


\begin{proof}
The fact that $B_{0}$ is absorbing follows from (\ref{PropReg1}) by taking
$r=1$. Since $B_{0}$ is closed in $L^{2}\left(  \Omega\right)  $ and bounded
in $H_{0}^{1}\left(  \Omega\right)  $, the compacity in $L^{2}\left(
\Omega\right)  $ follows.
\end{proof}

\bigskip

Now we are ready to prove the existence of a global compact attractor.

\begin{theorem}
\label{AttrReg}Let (\ref{2}) hold. Then the multivalued semiflow $G_{r}$
posseses a global compact attractor $\Theta_{r}$. Moreover, for any set $B$
bounded in $L^{2}\left(  \Omega\right)  $ we have%
\begin{equation}
dist_{H_{0}^{1}\left(  \Omega\right)  }\left(  G_{r}(t,B\right)  ,\Theta
_{r}\mathcal{)}\rightarrow0\text{ as }t\rightarrow+\infty, \label{ConvergH1}%
\end{equation}
and also that $\Theta_{r}$ is compact in $H_{0}^{1}\left(  \Omega\right)  .$
\end{theorem}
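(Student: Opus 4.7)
The plan is to follow the classical three step scheme: first extract the attractor in $L^{2}(\Omega)$ from an abstract m-semiflow existence theorem, then use the instantaneous smoothing action of the equation, which is already encoded in Lemma~\ref{ContLemma}, to upgrade both the compactness of the attractor and the attraction of bounded sets to the $H_{0}^{1}(\Omega)$ topology.

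For the first step I combine Lemma~\ref{AbsBall}, which supplies an $L^{2}$-compact absorbing ball $B_{0}$, with Corollary~\ref{USC}, which gives upper semicontinuity of $G_{r}$ with compact values, and apply the standard existence theorem for global attractors of upper semicontinuous m-semiflows possessing a compact absorbing set (e.g.\ the Mel'nik--Valero type result used throughout \cite{KapustValero06}, \cite{KMVY}). This produces a global compact attractor $\Theta_{r}\subset L^{2}(\Omega)$ satisfying the negative semi-invariance $\Theta_{r}\subset G_{r}(t,\Theta_{r})$ for all $t\geq0$.

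For the $H_{0}^{1}$-compactness of $\Theta_{r}$, take any sequence $\{z_{n}\}\subset\Theta_{r}$. Using $\Theta_{r}\subset G_{r}(1,\Theta_{r})$, write $z_{n}=u^{n}(1)$ with $u^{n}\in K_{r}^{+}$ and $w_{n}:=u^{n}(0)\in\Theta_{r}$. Since $\Theta_{r}$ is $L^{2}$-compact, a subsequence satisfies $w_{n}\to w$ in $L^{2}(\Omega)$. Applying Lemma~\ref{ContLemma} at $t_{0}=1>0$ yields a further subsequence and some $u\in K_{r}^{+}$ with $u(0)=w$ such that $u^{n_{k}}(1)\to u(1)$ strongly in $H_{0}^{1}(\Omega)$. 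In particular $z_{n_{k}}\to u(1)$ in $L^{2}(\Omega)$, and since $\Theta_{r}$ is $L^{2}$-closed, $u(1)\in\Theta_{r}$; this gives $H_{0}^{1}$-sequential precompactness, and closedness of $\Theta_{r}$ in $H_{0}^{1}(\Omega)$ follows automatically from its $L^{2}$-closedness because $H_{0}^{1}$-convergence implies $L^{2}$-convergence.

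For the $H_{0}^{1}$-attraction (\ref{ConvergH1}) I argue by contradiction. If it failed for some bounded $B\subset L^{2}(\Omega)$, there would exist $\varepsilon>0$, $t_{n}\uparrow\infty$ and $z_{n}\in G_{r}(t_{n},B)$ with $\mathrm{dist}_{H_{0}^{1}(\Omega)}(z_{n},\Theta_{r})\geq\varepsilon$. Choose $u^{n}\in K_{r}^{+}$ with $u^{n}(0)\in B$ and $z_{n}=u^{n}(t_{n})$; by Lemma~\ref{AbsBall}, $u^{n}(t_{n}-1)\in B_{0}$ for all large $n$, and is in particular $L^{2}$-bounded. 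Setting $v^{n}(s):=u^{n}(t_{n}-1+s)$, which again belongs to $K_{r}^{+}$ by the translation property $(K2)$, we may extract a subsequence with $v^{n}(0)\rightharpoonup w$ weakly in $L^{2}(\Omega)$. Lemma~\ref{ContLemma} applied at $t_{0}=1$ then produces a further subsequence and some $v\in K_{r}^{+}$ with $v(0)=w$ such that $z_{n_{k}}=v^{n_{k}}(1)\to v(1)$ strongly in $H_{0}^{1}(\Omega)$. The already established $L^{2}$-attraction forces $\mathrm{dist}_{L^{2}(\Omega)}(z_{n_{k}},\Theta_{r})\to0$, so $v(1)\in\Theta_{r}$ by $L^{2}$-closedness, contradicting $\mathrm{dist}_{H_{0}^{1}(\Omega)}(z_{n_{k}},\Theta_{r})\geq\varepsilon$.

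The main obstacle is ensuring that Lemma~\ref{ContLemma} really delivers \emph{strong} $H_{0}^{1}$ convergence at the chosen positive time. This instantaneous regularisation is the single feature that distinguishes regular solutions from arbitrary weak ones and is what both the compactness argument and the $H_{0}^{1}$-attraction argument rest upon; the only technical subtlety is that we cannot apply it at the terminal time $t_{n}\to\infty$ directly (since $G_{r}$ is not known to be strict and the concatenation property $(K3)$ may fail for regular solutions), so we bypass this by the time-shift $v^{n}(s)=u^{n}(t_{n}-1+s)$, which lets us invoke Lemma~\ref{ContLemma} at the fixed instant $t_{0}=1$.
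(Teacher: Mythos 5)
Your proposal is correct and follows essentially the same route as the paper: existence of the $L^{2}$-attractor from the compact absorbing ball of Lemma~\ref{AbsBall} plus the upper semicontinuity of Corollary~\ref{USC} via the Mel'nik--Valero theorem, and then the upgrade of both compactness and attraction to $H_{0}^{1}(\Omega)$ through the decomposition $G_{r}(t_{n},B)\subset G_{r}(1,G_{r}(t_{n}-1,B))\subset G_{r}(1,B_{0})$ together with the strong $H_{0}^{1}$-convergence at positive times furnished by Lemma~\ref{ContLemma}. The only cosmetic difference is that you build the $H_{0}^{1}$-limit explicitly from the shifted solutions and then place it in $\Theta_{r}$ via the $L^{2}$-attraction, whereas the paper first identifies the $L^{2}$-limit in $\Theta_{r}$ and then upgrades the convergence; the substance is identical.
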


\begin{proof}
The existence of a global compact attractor $\Theta_{r}$ follows from
Corollary \ref{USC}, Lemma \ref{AbsBall} and Theorem 4 in
\cite{MelnikValero98}.

Let now suppose that (\ref{ConvergH1}) is not true. Then there exists
$\varepsilon>0$ and a sequence $y_{n}\in G_{r}\left(  t_{n},B\right)  $ with
$t_{n}\rightarrow+\infty$ such that
\[
dist_{H_{0}^{1}\left(  \Omega\right)  }(y_{n},\Theta_{r}\mathcal{)>\varepsilon
}\text{ for all }n.
\]
There exists $y\in\Theta_{r}$ and a subsequence $y_{n_{k}}$ such that
$y_{n_{k}}\rightarrow y$ in $L^{2}\left(  \Omega\right)  $. In view of Lemma
\ref{ContLemma} the set $G_{r}(t,B)$ is precompact in $H_{0}^{1}\left(
\Omega\right)  $ for any $t>0$ and any bounded set in $L^{2}\left(
\Omega\right)  $. Therefore, from $y_{n_{k}}\in G_{r}\left(  1,G_{r}\left(
t_{n}-1,B\right)  \right)  \subset G_{r}(1,B_{0})$ for $n$ great enough, we
obtain that $y_{n_{k}}\rightarrow y$ in $H_{0}^{1}\left(  \Omega\right)  $,
which is a contradiction.

Finally, $\Theta_{r}\subset G_{r}(1,\Theta_{r})$ implies, by the same reason,
that $\Theta_{r}$ is precompact in $H_{0}^{1}\left(  \Omega\right)  $.
Moreover, since $\Theta_{r}$ is closed in $L^{2}\left(  \Omega\right)  $, it
is closed in $H_{0}^{1}\left(  \Omega\right)  $, as well, so that $\Theta_{r}$
is compact in $H_{0}^{1}\left(  \Omega\right)  .$
\end{proof}

\begin{remark}
Since $G_{r}\left(  t,u_{0}\right)  \subset G\left(  t,u_{0}\right)  $, it is
clear that $\Theta_{r}\subset\Theta.$
\end{remark}

\bigskip

The map $\gamma:\mathbb{R}\rightarrow L^{2}\left(  \Omega\right)  $ is called
a complete trajectory of $K_{r}^{+}$ if $\gamma\left(
\text{\textperiodcentered}+h\right)  \mid_{\lbrack0,+\infty)}\in K_{r}^{+}$
for any $h\in\mathbb{R}.$ The set of all complete trajectories of $K_{r}^{+}$
will be denoted by $\mathbb{F}_{r}$. Let $\mathbb{K}_{r}$ be the set of all
complete trajectories which are bounded in $L^{2}\left(  \Omega\right)  $, and
let $\mathbb{K}_{r}^{1}$ be the set of all complete trajectories which are
bounded in $H_{0}^{1}\left(  \Omega\right)  .$

\begin{lemma}
\label{EqBoundedComplete}Let (\ref{2}) hold. Then $\mathbb{K}_{r}%
=\mathbb{K}_{r}^{1}.$
\end{lemma}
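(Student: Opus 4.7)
The inclusion $\mathbb{K}_r^1\subset\mathbb{K}_r$ is immediate from the continuous embedding $H_0^1(\Omega)\subset L^2(\Omega)$, so the content of the lemma is the reverse inclusion. The plan is to exploit the smoothing estimate (\ref{PropReg1}) from Theorem \ref{ExistenceRegSol}, which bounds the $H_0^1$-norm of any regular solution after a positive time in terms of only the $L^2$-norm of its initial datum.

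Fix $\gamma\in\mathbb{K}_r$ and let $M:=\sup_{t\in\mathbb{R}}\|\gamma(t)\|^2<\infty$. For every $\tau\in\mathbb{R}$ the shifted restriction $u(\cdot):=\gamma(\cdot+\tau)\big|_{[0,+\infty)}$ belongs to $K_r^+$ by the definition of a complete trajectory of $K_r^+$, with $u(0)=\gamma(\tau)$. I would then apply (\ref{PropReg1}) to this regular solution with the parameter choice $r=1$ and $t\geq 0$ arbitrary, obtaining
\[
\|\gamma(\tau+t+1)\|_{H_0^1(\Omega)}^{2}\leq R_{1}\bigl(e^{-\lambda_{1}t}\|\gamma(\tau)\|^{2}+1+3\bigr)e.
\]

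Given any $s\in\mathbb{R}$, I choose $\tau=s-1$ and $t=0$, which yields
\[
\|\gamma(s)\|_{H_0^1(\Omega)}^{2}\leq R_{1}\bigl(\|\gamma(s-1)\|^{2}+4\bigr)e\leq R_{1}(M+4)e.
\]
Since the right-hand side is independent of $s$, we conclude that $\sup_{s\in\mathbb{R}}\|\gamma(s)\|_{H_0^1(\Omega)}<\infty$, so $\gamma\in\mathbb{K}_r^1$.

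There is no real obstacle here: the whole point is that the regularizing estimate (\ref{PropReg1}) is valid uniformly backward in time because a complete trajectory of $K_r^+$ is, after any finite shift, a regular solution on $[0,+\infty)$ starting from an $L^2$-bounded datum. The only thing to be careful about is that the estimate be applied to a regular solution (not merely a weak one), which is guaranteed by the definition of complete trajectory of $K_r^+$ in the paragraph preceding the lemma.
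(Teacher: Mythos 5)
Your proof is correct and takes essentially the same route as the paper's: the paper simply observes that $B_{\gamma}=\cup_{t\in\mathbb{R}}\gamma\left(  t\right)  \subset G_{r}\left(  1,B_{\gamma}\right)  $ and invokes (\ref{PropReg1}), which is exactly your shift-by-$\tau$ argument with $r=1$ written out explicitly. (Your constant should be $+3$ rather than $+1+3$ after substituting $r=1$ into (\ref{PropReg1}), but this over-estimate is harmless.)
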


\begin{proof}
It is clear that $\mathbb{K}_{r}^{1}\subset\mathbb{K}_{r}.$ Let $\gamma\left(
\text{\textperiodcentered}\right)  \in\mathbb{K}_{r}$ and denote $B_{\gamma
}=\cup_{t\in\mathbb{R}}\gamma\left(  t\right)  $. Then
\[
B_{\gamma}\subset G_{r}\left(  1,B_{\gamma}\right)
\]
and (\ref{PropReg1}) implies that $B_{\gamma}$ is bounded in $H_{0}^{1}\left(
\Omega\right)  ,$ so $\gamma\left(  \text{\textperiodcentered}\right)
\in\mathbb{K}_{r}^{1}.$
\end{proof}

\bigskip

Further, we shall prove that $\Theta_{r}$ is the union of all points lying in
a bounded complete trajectory, that is,
\begin{align}
\Theta_{r}  &  =\{\gamma\left(  0\right)  :\gamma\left(
\text{\textperiodcentered}\right)  \in\mathbb{K}_{r}\}=\{\gamma\left(
0\right)  :\gamma\left(  \text{\textperiodcentered}\right)  \in\mathbb{K}%
_{r}^{1}\}\label{BoundedComplete}\\
&  =\cup_{t\in\mathbb{R}}\{\gamma\left(  t\right)  :\gamma\left(
\text{\textperiodcentered}\right)  \in\mathbb{K}_{r}\}=\cup_{t\in\mathbb{R}%
}\{\gamma\left(  t\right)  :\gamma\left(  \text{\textperiodcentered}\right)
\in\mathbb{K}_{r}^{1}\}.\nonumber
\end{align}

\begin{theorem}
\label{CaractAttrR}Let (\ref{2}) hold. Then equalities (\ref{BoundedComplete})
hold true.
\end{theorem}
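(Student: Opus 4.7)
The plan is to invoke the abstract characterization of Theorem \ref{CaractAttrAbs} directly for the semiflow $G_r$, since essentially all the hypotheses have already been verified, and then transfer the conclusion to the $H_0^1$-bounded complete trajectories by Lemma \ref{EqBoundedComplete}.

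First, I would check that the triple $(K_r^+,G_r,\Theta_r)$ fits the abstract framework. Property $(K1)$ holds by the existence part of Theorem \ref{ExistenceRegSol} (for any $u_0\in L^2(\Omega)$ there is a regular solution with $u(0)=u_0$), and $(K2)$ is immediate since a time-translate of a regular solution is again a regular solution (the relevant $L^\infty(\varepsilon,T;H_0^1)$ and $L^2(\varepsilon,T;L^2)$ bounds on $u_t$ are preserved under shifts $t\mapsto t+\tau$, $\tau\ge 0$). Property $(K4)$ is exactly Corollary \ref{K4Regular}. The existence of the compact global attractor $\Theta_r$ is Theorem \ref{AttrReg}. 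Note that $(K3)$ is in general not known to hold for $K_r^+$, so I cannot appeal to Theorem \ref{CaractAttrAbs2}; it is essential that Theorem \ref{CaractAttrAbs} requires only $(K1),(K2),(K4)$.

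Applying Theorem \ref{CaractAttrAbs} to $G_r$ then yields
\[
\Theta_{r}=\{\gamma(0):\gamma(\cdot)\in\mathbb{K}_{r}\}=\bigcup_{t\in\mathbb{R}}\{\gamma(t):\gamma(\cdot)\in\mathbb{K}_{r}\},
\]
which gives the first and third equalities in (\ref{BoundedComplete}). By Lemma \ref{EqBoundedComplete} we have $\mathbb{K}_r=\mathbb{K}_r^1$, so the remaining two equalities follow automatically by replacing $\mathbb{K}_r$ with $\mathbb{K}_r^1$ in the above identities.

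Since the heavy lifting is packaged in Theorem \ref{CaractAttrAbs} and in Corollary \ref{K4Regular}, there is no real obstacle left here; the only subtlety worth stressing in the write-up is that $(K3)$ is not available for regular solutions (as pointed out in the remark following Theorem \ref{ExistenceRegSol}), so one must use the $(K4)$-version of the abstract theorem rather than the $(K3)$-version. This is exactly the reason for proving both Theorems \ref{CaractAttrAbs} and \ref{CaractAttrAbs2} in the abstract section.
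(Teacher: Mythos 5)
Your proof is correct and follows exactly the paper's own route: the paper's proof is the one-line citation of Theorem \ref{CaractAttrAbs}, Corollary \ref{K4Regular} and Lemma \ref{EqBoundedComplete}, which is precisely what you spell out (including the correct observation that one must use the $\left(K4\right)$-version of the abstract theorem since $\left(K3\right)$ is not available for $K_{r}^{+}$).
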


\begin{proof}
It follows from Theorem \ref{CaractAttrAbs}, Corollary \ref{K4Regular} and
Lemma \ref{EqBoundedComplete}.

\bigskip
\end{proof}

We shall establish now for $G_{r}$ the same statements as in Lemma
\ref{EqFixed} and in the first point of Theorem \ref{PropK+}.

\begin{lemma}
\label{EqFixedReg}Let (\ref{2}) hold. Then $\mathfrak{R}=\mathfrak{R}%
_{K_{r}^{+}}.$
\end{lemma}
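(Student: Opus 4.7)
The plan is to mimic the proof of Lemma \ref{EqFixed}, but with a check that the constant function not only satisfies the weak formulation but also the extra regularity conditions (\ref{LInfH1})--(\ref{DerivL2}) built into the definition of regular solutions.

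For the inclusion $\mathfrak{R}_{K_r^+}\subset\mathfrak{R}$, if $u_0\in\mathfrak{R}_{K_r^+}$ then the constant map $u(t)\equiv u_0$ lies in $K_r^+$, and in particular is a weak solution of (\ref{1}). Inserting $u(t)\equiv u_0$ into (\ref{Eq1}) and using that $\eta\in C_0^\infty(0,T)$ is arbitrary, one reads off that $u_0$ satisfies $-\Delta u_0+f(u_0)=h$ in $H^{-1}(\Omega)$, i.e.\ $u_0\in\mathfrak{R}$.

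For the reverse inclusion $\mathfrak{R}\subset\mathfrak{R}_{K_r^+}$, take $u_0\in\mathfrak{R}$. By Lemma \ref{lem:1}, $u_0\in H_0^1(\Omega)\cap H^2(\Omega)$, so in particular $u_0\in L^4(\Omega)$ via the Sobolev embedding $H_0^1(\Omega)\subset L^6(\Omega)$. Define $u(t)\equiv u_0$. Then $u\in L^2_{loc}(0,+\infty;H_0^1(\Omega))\cap L^4_{loc}(0,+\infty;L^4(\Omega))$, and because $u_0$ solves the stationary equation (\ref{Stationary}), the integrand of (\ref{Eq1}) equals $\eta\langle -\Delta u_0+f(u_0)-h,v\rangle=0$, so (\ref{Eq1}) holds. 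Finally, $u\in L^\infty(\varepsilon,T;H_0^1(\Omega))$ trivially, and $u_t\equiv 0\in L^2(\varepsilon,T;L^2(\Omega))$, so (\ref{LInfH1})--(\ref{DerivL2}) are met. Hence $u(\cdot)\in K_r^+$ and $u_0\in\mathfrak{R}_{K_r^+}$.

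The only nontrivial ingredient is the regularity $u_0\in H_0^1(\Omega)$ of stationary points, which is already provided by Lemma \ref{lem:1}; everything else is an immediate verification. There is therefore no real obstacle, and the proof is essentially two short paragraphs.
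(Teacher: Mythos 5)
Your proof is correct and follows essentially the same route as the paper: the forward inclusion reads the stationary equation off the weak formulation for the constant trajectory, and the reverse inclusion invokes Lemma \ref{lem:1} to get $u_0\in H_0^1(\Omega)\cap H^2(\Omega)$ so that the constant function satisfies (\ref{LInfH1})--(\ref{DerivL2}) and hence lies in $K_r^+$. You merely spell out the verification of the extra regularity conditions that the paper leaves implicit.
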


\begin{proof}
Let $u_{0}\in\mathfrak{R}_{K_{r}^{+}}$. Then $u\left(  t\right)  \equiv u_{0}$
belongs to $K_{r}^{+}$. Thus $u\left(  \text{\textperiodcentered}\right)  $
satisfies (\ref{3}), so that (\ref{Stationary}) holds. Conversely, let
$v_{0}\in\mathfrak{R}$. In view of Lemma \ref{lem:1} the set of stationary
points of (\ref{1}) $\mathfrak{R}$ is bounded in $H^{2}\left(  \Omega\right)
\cap H_{0}^{1}\left(  \Omega\right)  $. Then the function $v\left(
\text{\textperiodcentered}\right)  $ defined by $v\left(  t\right)  \equiv
v_{0}$ belongs to $K_{r}^{+}$.
\end{proof}

\begin{corollary}
$\mathfrak{R\subset}\Theta_{r}\mathfrak{.}$
\end{corollary}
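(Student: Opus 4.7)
The plan is to exhibit, for each $z\in\mathfrak{R}$, a bounded complete trajectory of $K_{r}^{+}$ passing through $z$ at time $0$, and then to invoke the characterization of $\Theta_{r}$ as the union of all such trajectories given by Theorem \ref{CaractAttrR}.

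First I would fix $z\in\mathfrak{R}$. By Lemma \ref{EqFixedReg} we have $\mathfrak{R}=\mathfrak{R}_{K_{r}^{+}}$, so the constant function $\varphi(t)\equiv z$, defined on $[0,+\infty)$, belongs to $K_{r}^{+}$ in the sense of Definition \ref{DefFixed}. Define $\gamma:\mathbb{R}\rightarrow L^{2}(\Omega)$ by $\gamma(t)\equiv z$. For any $h\in\mathbb{R}$, the restriction $\gamma(\cdot+h)|_{[0,+\infty)}$ is again the constant function equal to $z$, hence it belongs to $K_{r}^{+}$. Therefore $\gamma$ is a complete trajectory of $K_{r}^{+}$.

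Since $\gamma$ is constant, the set $\bigcup_{t\in\mathbb{R}}\gamma(t)=\{z\}$ is trivially bounded in $L^{2}(\Omega)$, so $\gamma\in\mathbb{K}_{r}$. Applying Theorem \ref{CaractAttrR}, we conclude
\[
z=\gamma(0)\in\{\gamma(0):\gamma(\cdot)\in\mathbb{K}_{r}\}=\Theta_{r}.
\]
As $z\in\mathfrak{R}$ was arbitrary, $\mathfrak{R}\subset\Theta_{r}$. There is no real obstacle here: once Lemma \ref{EqFixedReg} identifies fixed points of $K_{r}^{+}$ with stationary solutions of (\ref{1}), the statement reduces to the observation that constant trajectories are bounded and complete, so they sit inside the attractor by its characterization via bounded complete trajectories.
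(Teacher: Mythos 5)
Your proof is correct and follows exactly the route the paper intends: the corollary is stated immediately after Lemma \ref{EqFixedReg} without an explicit proof precisely because the constant trajectory through a stationary point is a bounded complete trajectory of $K_{r}^{+}$, so membership in $\Theta_{r}$ follows from the characterization in Theorem \ref{CaractAttrR}. Nothing is missing.
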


\begin{lemma}
\label{FixedPointsCarR}Let (\ref{2}) hold. Then $z\in\mathfrak{R}$ if and only
if $z\in G_{r}\left(  t,z\right)  $ for all $t\geq0.$
\end{lemma}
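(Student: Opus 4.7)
The statement splits naturally into two implications, and both are essentially bookkeeping on top of results already established.

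For the forward direction, suppose $z \in \mathfrak{R}$. By Lemma \ref{EqFixedReg} we have $z \in \mathfrak{R}_{K_r^+}$, which by Definition \ref{DefFixed} means that the constant function $u(t) \equiv z$ belongs to $K_r^+$. Evaluating this trajectory at any $t \geq 0$ gives $z = u(t) \in G_r(t,z)$.

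For the reverse direction, the temptation is to mimic the argument of Lemma \ref{FixedPointsCar}: for each $T$, pick $u^T \in K_r^+$ with $u^T(0) = u^T(T) = z$, subdivide via dyadic partitions, and concatenate pieces on $[j2^{-n},(j+1)2^{-n}]$ that begin and end at $z$, then pass to the limit using $(K4)$. The obstacle here is precisely the one flagged in the remark after Corollary \ref{USC}: $K_r^+$ is \emph{not} known to satisfy the concatenation property $(K3)$, because the regularity conditions (\ref{LInfH1})--(\ref{DerivL2}) may be lost at a junction time. So a direct appeal to Lemma \ref{FixedPointsCar} for $K_r^+$ is not available.

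The fix is to reduce to the weak-solution case, where all four hypotheses $(K1)$--$(K4)$ do hold. Since every regular solution is a weak solution, $K_r^+ \subset K^+$ and consequently $G_r(t,u_0) \subset G(t,u_0)$ for every $t \geq 0$ and $u_0 \in L^2(\Omega)$. Hence the assumption $z \in G_r(t,z)$ for all $t \geq 0$ immediately upgrades to $z \in G(t,z)$ for all $t \geq 0$. By part (1) of Theorem \ref{PropK+} this gives $z \in \mathfrak{R}$, which finishes the proof. The only non-routine observation is the inclusion $G_r \subset G$; everything else is a direct citation of Theorem \ref{PropK+} and Lemma \ref{EqFixedReg}, so I do not expect any serious obstacle beyond correctly stating these reductions.
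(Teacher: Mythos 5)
Your proof is correct and follows exactly the paper's argument: the forward direction via Lemma \ref{EqFixedReg} and the constant trajectory, and the reverse direction via the inclusion $G_{r}\left(t,z\right)\subset G\left(t,z\right)$ together with Theorem \ref{PropK+}. Your side remark explaining why one cannot invoke Lemma \ref{FixedPointsCar} directly for $K_{r}^{+}$ (failure of $\left(K3\right)$) correctly identifies the reason the paper also routes the argument through the weak-solution semiflow.
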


\begin{proof}
If $z\in\mathfrak{R}$, Lemma \ref{EqFixedReg} implies that the function
$v\left(  \text{\textperiodcentered}\right)  $ defined by $v\left(  t\right)
\equiv z$ belongs to $K_{r}^{+}$. Then $z\in G_{r}\left(  t,z\right)  ,$ for
any $t\geq0.$

Conversely, let $z\in G_{r}\left(  t,z\right)  $ for all $t\geq0$. Since $z\in
G_{r}\left(  t,z\right)  \subset G\left(  t,z\right)  $, Theorem \ref{PropK+}
implies that $z\in\mathfrak{R.}$
\end{proof}

\bigskip

It is clear that if the map $\gamma:\mathbb{R}\rightarrow L^{2}\left(
\Omega\right)  $ is a complete trajectory of $K_{r}^{+}$, then
\begin{equation}
\gamma\left(  t+s\right)  \in G_{r}\left(  t,\gamma\left(  s\right)  \right)
\text{ for all }s\in\mathbb{R}\text{ and }t\geq0. \label{CompleteTrajR}%
\end{equation}
Let $\gamma\left(  \text{\textperiodcentered}\right)  $ satisfy
(\ref{CompleteTrajR}). Then $\gamma\left(  t\right)  \in G\left(
t,\gamma\left(  s\right)  \right)  $ and by Theorem \ref{PropK+} we have that
$\gamma\left(  \text{\textperiodcentered}\right)  $ is a complete trajectory
for $K^{+}$. However, it is not clear whether $\gamma\left(
\text{\textperiodcentered}\right)  $ is a complete trajectory of $K_{r}^{+}$
or not, as $\left(  K3\right)  $ fails, so that we cannot use Lemma
\ref{CompleteTrajEquiv}. Nevertheless, we can obtain the following.

\begin{lemma}
\label{CompleteTrajEquivR}Let (\ref{2}) hold. Then the map $\gamma
:\mathbb{R}\rightarrow L^{2}\left(  \Omega\right)  $ is a complete trajectory
of $K_{r}^{+}$ if and only if $\gamma\in L_{loc}^{\infty}(\mathbb{R};H_{0}%
^{1}\left(  \Omega\right)  )$, $\gamma_{t}\in L_{loc}^{2}(\mathbb{R}%
;L^{2}\left(  \Omega\right)  )$ and (\ref{CompleteTrajR}) holds.
\end{lemma}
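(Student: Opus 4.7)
The plan is to prove the two directions separately, with the nontrivial content being in the ``if'' direction, where we must exploit the already established theory for $K^+$ (Theorem \ref{PropK+}) in order to bypass the lack of property $(K3)$ for $K_r^+$.

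\textbf{Only if direction.} Assume $\gamma$ is a complete trajectory of $K_r^+$. Then for every $h\in\mathbb{R}$ the translate $\gamma(\cdot+h)|_{[0,+\infty)}$ belongs to $K_r^+$, hence is a regular solution. In particular, (\ref{CompleteTrajR}) holds by taking the specific solution $u(\cdot)=\gamma(\cdot+s)|_{[0,+\infty)}\in K_r^+$, which satisfies $u(0)=\gamma(s)$ and $u(t)=\gamma(s+t)$, so that $\gamma(s+t)\in G_r(t,\gamma(s))$. Moreover, (\ref{LInfH1})--(\ref{DerivL2}) applied to each translate, combined with the freedom to choose $h\in\mathbb{R}$ and $0<\varepsilon<T$ arbitrarily, cover any compact subinterval of $\mathbb{R}$ and therefore yield $\gamma\in L^\infty_{loc}(\mathbb{R};H_0^1(\Omega))$ and $\gamma_t\in L^2_{loc}(\mathbb{R};L^2(\Omega))$.

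\textbf{If direction.} Assume the three conditions on the right-hand side. Since $G_r(t,x)\subset G(t,x)$ for all $t\geq 0$ and $x\in L^2(\Omega)$, property (\ref{CompleteTrajR}) immediately upgrades to $\gamma(t+s)\in G(t,\gamma(s))$ for all $s\in\mathbb{R}$ and $t\geq 0$. Then item 2 of Theorem \ref{PropK+} applies and tells us that $\gamma$ is a complete trajectory of $K^+$, i.e., $\gamma(\cdot+h)|_{[0,+\infty)}$ is a weak solution of (\ref{1}) on $(0,+\infty)$ for every $h\in\mathbb{R}$. It therefore satisfies (\ref{Eq1}). Fix $h\in\mathbb{R}$; by the regularity hypotheses we also have $\gamma(\cdot+h)\in L^\infty(\varepsilon,T;H_0^1(\Omega))$ and $\gamma_t(\cdot+h)\in L^2(\varepsilon,T;L^2(\Omega))$ for every $0<\varepsilon<T$, which are exactly (\ref{LInfH1})--(\ref{DerivL2}). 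Therefore $\gamma(\cdot+h)|_{[0,+\infty)}\in K_r^+$ for each $h$, so $\gamma$ is a complete trajectory of $K_r^+$.

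\textbf{Main obstacle.} There is no genuine calculational difficulty; the subtle point is conceptual. Property (\ref{CompleteTrajR}) only guarantees, for each pair $(s,t)$, the existence of \emph{some} regular solution from $\gamma(s)$ landing on $\gamma(s+t)$ at time $t$, not that the translate of $\gamma$ itself is a regular solution. Because $(K3)$ is not available for $K_r^+$, one cannot apply Lemma \ref{CompleteTrajEquiv} directly to reconstruct such a solution by concatenation; instead, one must first pass to $G$ (where $(K3)$ does hold and Theorem \ref{PropK+} is available) to recover the weak-solution property of the translate, and only then use the extra regularity in the hypothesis to promote it to a regular solution. This is the essential role played by the assumption $\gamma\in L^\infty_{loc}(\mathbb{R};H_0^1(\Omega))$ and $\gamma_t\in L^2_{loc}(\mathbb{R};L^2(\Omega))$.
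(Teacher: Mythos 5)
Your proof is correct and follows essentially the same route as the paper: the forward direction is immediate from the definition of $K_r^+$ applied to each translate, and the converse passes through the inclusion $G_r(t,x)\subset G(t,x)$ and item 2 of Theorem \ref{PropK+} to recover that each translate is a weak solution, then uses the assumed regularity to conclude it is regular. Your closing remark about why $(K3)$ forces this detour through $K^+$ is exactly the point the paper makes in the paragraph preceding the lemma.
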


\begin{proof}
If the map $\gamma:\mathbb{R}\rightarrow L^{2}\left(  \Omega\right)  $ is a
complete trajectory of $K_{r}^{+}$, then clearly (\ref{CompleteTrajR}) and
$\gamma\in L_{loc}^{\infty}(\mathbb{R};H_{0}^{1}\left(  \Omega\right)  )$,
$\gamma_{t}\in L_{loc}^{2}(\mathbb{R};L^{2}\left(  \Omega\right)  )$ hold.

Conversely, let $\gamma\in L_{loc}^{\infty}(\mathbb{R};H_{0}^{1}\left(
\Omega\right)  )$, $\gamma_{t}\in L_{loc}^{2}(\mathbb{R};L^{2}\left(
\Omega\right)  )$ and (\ref{CompleteTrajR}) hold. Then by Theorem \ref{PropK+}
we have that $\gamma\left(  \text{\textperiodcentered}\right)  $ is a complete
trajectory for $K^{+}$. As $\gamma\in L_{loc}^{\infty}(\mathbb{R};H_{0}%
^{1}\left(  \Omega\right)  )$, $\gamma_{t}\in L_{loc}^{2}(\mathbb{R}%
;L^{2}\left(  \Omega\right)  )$, it is clear that $\gamma$ is a complete
trajectory of $K_{r}^{+}.$
\end{proof}

\bigskip

We shall prove now that
\begin{equation}
\Theta_{r}=M_{r}^{+}(\mathfrak{R})=M_{r}^{-}(\mathfrak{R}), \label{Str}%
\end{equation}
where
\[%
\begin{array}
[c]{c}%
M_{r}^{-}(\mathfrak{R})=\left\{  z\,:\,\exists\gamma(\cdot)\in\mathbb{K}%
_{r},\,\ \gamma(0)=z,\,\,\,\ \mathrm{dist}_{L^{2}(\Omega)}(\gamma
(t),\mathfrak{R})\rightarrow0,\,\ t\rightarrow+\infty\right\}  ,\\
M_{r}^{+}(\mathfrak{R})=\left\{  z\,:\,\exists\gamma(\cdot)\in\mathbb{F}%
_{r},\,\ \gamma(0)=z,\,\,\,\ \mathrm{dist}_{L^{2}(\Omega)}(\gamma
(t),\mathfrak{R})\rightarrow0,\,\ t\rightarrow-\infty\right\}  .
\end{array}
\]
As in the case of weak solutions, in the definition of $M_{r}^{+}%
(\mathfrak{R})$ we can replace $\mathbb{F}_{r}$ by $\mathbb{K}_{r}$, since
every $\gamma$ as given in the definition of $M_{r}^{+}(\mathfrak{R})$ belongs
to $\mathbb{K}_{r}$ in view of (\ref{PropReg2}).

\begin{theorem}
\label{StructureReg} Under conditions (\ref{2}) it holds
\begin{equation}
\Theta_{r}=M_{r}^{+}(\mathfrak{R})=M_{r}^{-}(\mathfrak{R}). \label{EqualAttR}%
\end{equation}
Moreover,
\begin{equation}%
\begin{array}
[c]{c}%
M_{r}^{-}(\mathfrak{R})=\left\{  z\,:\,\exists\gamma(\cdot)\in\mathbb{K}%
_{r},\,\ \gamma(0)=z,\,\,\,\ \mathrm{dist}_{H_{0}^{1}(\Omega)}(\gamma
(t),\mathfrak{R})\rightarrow0,\,\ t\rightarrow+\infty\right\}  ,\\
M_{r}^{+}(\mathfrak{R})=\left\{  z\,:\,\exists\gamma(\cdot)\in\mathbb{F}%
_{r},\,\ \gamma(0)=z,\,\,\,\ \mathrm{dist}_{H_{0}^{1}(\Omega)}(\gamma
(t),\mathfrak{R})\rightarrow0,\,\ t\rightarrow-\infty\right\}  .
\end{array}
\label{StrH1}%
\end{equation}

\end{theorem}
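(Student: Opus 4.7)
My plan is to take the energy equality (\ref{Energy}) as a Lyapunov-functional identity for $E(u) := \|u\|_{H_0^1(\Omega)}^2 + 2(F(u),1) - 2(h,u)$, and use it first to force existence of near-equilibria along any bounded complete trajectory, and then to upgrade $L^2$-convergence to $H_0^1$-convergence near a stationary point. The inclusions $M_r^{\pm}(\mathfrak{R}) \subset \Theta_r$ are cheap: $M_r^-(\mathfrak{R}) \subset \Theta_r$ is immediate from Theorem \ref{CaractAttrR} since any $z \in M_r^-(\mathfrak{R})$ lies on some $\gamma \in \mathbb{K}_r$, while for $M_r^+(\mathfrak{R}) \subset \Theta_r$ any $\gamma \in \mathbb{F}_r$ with $\mathrm{dist}_{L^2}(\gamma(t),\mathfrak{R}) \to 0$ as $t \to -\infty$ has $\gamma(s)$ bounded in $L^2$ for $s \to -\infty$ (Lemma \ref{lem:1}); the absorbing estimate (\ref{PropReg2}) applied on $[s,0]$ and letting $s \to -\infty$ extends this bound to all of $\mathbb{R}$, so $\gamma \in \mathbb{K}_r$ and $z = \gamma(0) \in \Theta_r$.

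For the reverse inclusion $\Theta_r \subset M_r^{\pm}(\mathfrak{R})$ with the stronger $H_0^1$-convergence, I fix $z \in \Theta_r$ and pick $\gamma \in \mathbb{K}_r^1$ with $\gamma(0) = z$ (Lemma \ref{EqBoundedComplete}). Translating (\ref{Energy}) shows that $E(\gamma(t)) + 2\int_s^t\|\gamma_r\|^2\,dr = E(\gamma(s))$ holds for all $s \leq t$ in $\mathbb{R}$. Since $\gamma$ is bounded in $H_0^1$ and $E$ is continuous on $H_0^1$ (the Nemytskii map $F$ is continuous from $L^4$ to $L^1$ via the growth in (\ref{PropF})), $E(\gamma(\cdot))$ is bounded and monotone non-increasing; hence $E_{\pm\infty} := \lim_{t \to \pm\infty} E(\gamma(t))$ exist and $\int_{\mathbb{R}}\|\gamma_s\|^2\,ds < \infty$.

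I focus on $\mathrm{dist}_{H_0^1}(\gamma(t),\mathfrak{R}) \to 0$ as $t \to +\infty$, the backward case being symmetric. Arguing by contradiction, suppose $t_n \to +\infty$ satisfies $\mathrm{dist}_{H_0^1}(\gamma(t_n),\mathfrak{R}) \geq \varepsilon$. Fixing a gap $\delta \in (0,1)$, integrability of $\|\gamma_s\|^2$ lets me pick $t_n' \in [t_n - 1, t_n - \delta]$ with $\|\gamma_t(t_n')\| \to 0$. Passing to subsequences, $\gamma(t_n') \rightharpoonup v$ weakly in $H_0^1$, and the equation $\Delta\gamma(t_n') = \gamma_t(t_n') + f(\gamma(t_n')) - h$ combined with the strong continuity $f : H_0^1 \to H^{-1}$ used in the proof of Theorem \ref{teor2} forces $\Delta\gamma(t_n') \to f(v) - h$ in $H^{-1}$, upgrading to $\gamma(t_n') \to v$ strongly in $H_0^1$ with $v \in \mathfrak{R}$. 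The bound $|E(\gamma(t_n)) - E(\gamma(t_n'))| \leq 2\int_{t_n-1}^{t_n}\|\gamma_s\|^2\,ds \to 0$ combined with continuity of $E$ then yields $E_{+\infty} = E(v)$.

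To close the contradiction I apply Lemma \ref{ContLemma} to the shifts $u_n(s) := \gamma(t_n' + s) \in K_r^+$, with $u_n(0) \to v$ strongly in $L^2$: up to a further subsequence with $t_n - t_n' \to \tau \in [\delta,1]$, there exists $u \in K_r^+$ with $u(0) = v$ such that $\gamma(t_n) = u_n(t_n - t_n') \to u(\tau)$ strongly in $H_0^1$ (this is where $\tau > 0$, and hence the gap $\delta$, is essential). Then $E(u(\tau)) = \lim E(\gamma(t_n)) = E(v)$, while weak lower semicontinuity combined with $\int_0^\tau\|(u_n)_s\|^2\,ds = \int_{t_n'}^{t_n'+\tau}\|\gamma_r\|^2\,dr \to 0$ forces $\int_0^\tau\|u_s\|^2\,ds = 0$; thus $u$ is $H_0^1$-constant on $(0,\tau)$, and the $H^{-1}$-continuity of $u_n \to u$ identifies this constant with $v$. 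Therefore $\gamma(t_n) \to v \in \mathfrak{R}$ in $H_0^1$, contradicting $\mathrm{dist}_{H_0^1}(\gamma(t_n),\mathfrak{R}) \geq \varepsilon$. The main obstacle is precisely this final identification: it requires simultaneously the monotonicity of $E$, the $H_0^1$-version of Lemma \ref{ContLemma} (which forces $\tau$ to be strictly positive), and the global integrability $\int_{\mathbb{R}}\|\gamma_s\|^2\,ds < \infty$ coming from having (\ref{Energy}) on the whole real line rather than just on a half-line.
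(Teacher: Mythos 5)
Your argument is correct, but it follows a genuinely different route from the one in the paper. The paper's proof is a pure Lyapunov-function argument: assuming $\mathrm{dist}_{L^2(\Omega)}(u(t_n),\mathfrak{R})>\varepsilon$, it uses the compactness of $\Theta_r$ in $H_0^1(\Omega)$ (Theorem \ref{AttrReg}) to extract $u(t_n)\to y$ strongly in $H_0^1(\Omega)$, notes that $E$ takes the constant value $l=\lim_{t\to+\infty}E(u(t))$ along any regular solution issuing from $y$ (via Lemma \ref{ContLemma} and the continuity of $E$ on $H_0^1(\Omega)$), and then reads off from the energy equality (\ref{Energy}) that such a solution has vanishing time derivative, so $y\in\mathfrak{R}$; the $H_0^1$ statement (\ref{StrH1}) is obtained afterwards by a separate compactness argument using the $H_0^1$-compactness of $\Theta_r$ and of $\mathfrak{R}$. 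You instead adapt the mechanism of Theorem \ref{teor2}: the energy identity on the whole real line yields $\int_{\mathbb{R}}\Vert\gamma_s\Vert^2\,ds<\infty$, which produces intermediate times $t_n'$ with $\Vert\gamma_t(t_n')\Vert\to0$; the stationary limit $v\in\mathfrak{R}$ is identified through the elliptic equation and the strong continuity of $f:H_0^1(\Omega)\to H^{-1}(\Omega)$, giving strong $H_0^1$ convergence of $\gamma(t_n')$ directly; and Lemma \ref{ContLemma} together with the vanishing of $\int_{t_n'}^{t_n}\Vert\gamma_s\Vert^2\,ds$ transports this to $\gamma(t_n)\to v$ in $H_0^1(\Omega)$. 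The paper explicitly remarks that the Theorem \ref{teor2} route is available but opts for the Lyapunov one. Your version delivers the $H_0^1$ convergence in a single pass and does not need the $H_0^1$-compactness of $\Theta_r$ as a set, only the convergence properties of Lemma \ref{ContLemma}; the paper's version is shorter once Theorem \ref{AttrReg} is in hand and avoids the selection of the times $t_n'$ (which, as a small detail you leave implicit, must also be taken from the set of full measure where the equation holds pointwise in $H^{-1}(\Omega)$, exactly as in the proof of Theorem \ref{teor2}).
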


\begin{proof}
We can prove this theorem arguing in a rather similar way as in Theorem
\ref{teor2}. However, we shall prove it using the method of the Lyapunov
function, as in \cite{BabinVishik89}, \cite{Temam2}.

Let $z\in\Theta_{r}$ and let $u\in\mathbb{K}_{r}$ be such that $u(0)=z$. We
note that the energy function $E\left(  u\left(  t\right)  \right)  $, $t>0$,
given in (\ref{Energy}) is nonincreasing and bounded below (which follows
easily from (\ref{PropF})) for any $u\in K_{r}^{+}$. Hence, $E\left(  u\left(
t\right)  \right)  \rightarrow l$, as $t\rightarrow+\infty$, for some
$l\in\mathbb{R}.$

Suppose that there exist $\varepsilon>0$ and a sequence $u\left(
t_{n}\right)  $, $t_{n}\rightarrow+\infty$, such that
\[
\mathrm{dist}_{L^{2}(\Omega)}(u(t_{n}),\mathfrak{R})>\varepsilon.
\]
In view of Theorem \ref{AttrReg} we have that $\Theta_{r}$ is compact in
$H_{0}^{1}\left(  \Omega\right)  $, and then we can take a converging
subsequence (denoted again $u\left(  t_{n}\right)  $) for which $u\left(
t_{n}\right)  \rightarrow y$ in $H_{0}^{1}\left(  \Omega\right)  $, where
$t_{n}\rightarrow+\infty$. Since the function $E:H_{0}^{1}\left(
\Omega\right)  \rightarrow\mathbb{R}$ is continuous, we obtain that $E\left(
y\right)  =l$. We shall prove that $y\in\mathfrak{R}$. Fix $t>0$. In view of
Lemma \ref{ContLemma} there exists $v\in K_{r}^{+}$ and a subsequence of
$v_{n}\left(  \text{\textperiodcentered}\right)  =u\left(
\text{\textperiodcentered}+t_{n}\right)  $ (denoted $v_{n}$ again) such that
$v\left(  0\right)  =y$ and $v_{n}\left(  t\right)  \rightarrow v\left(
t\right)  =z$ in $H_{0}^{1}\left(  \Omega\right)  $. Hence, $E\left(
v_{n}\left(  t\right)  \right)  \rightarrow E(z)$ implies that $E\left(
z\right)  =l$. We note that $v\left(  \text{\textperiodcentered}\right)  $
satisfies (\ref{Energy}) for all $0\leq s\leq t$, so that%
\[
l+2\int_{0}^{t}\left\Vert v_{r}\right\Vert ^{2}dr=E\left(  z\right)
+2\int_{0}^{t}\left\Vert v_{r}\right\Vert ^{2}dr=E\left(  v\left(  0\right)
\right)  =E(y)=l\text{.}%
\]
This implies that $v_{r}\left(  r\right)  =0$ for a.a. $r$, and therefore
$y\in\mathfrak{R}$. Hence, we obtain a contradiction. Thus, $\Theta_{r}\subset
M_{r}^{-}(\mathfrak{R})$. The converse inclusion is obvious from Theorem
\ref{CaractAttrR}, so that $\Theta_{r}=M_{r}^{-}(\mathfrak{R}).$

On the other hand, we observe that for any $u\in\mathbb{F}_{r}$ equality
(\ref{Energy}) is satisfied for all $-\infty<s\leq t$. Let $z\in\Theta_{r}$
and let $u\in\mathbb{K}_{r}=\mathbb{K}_{r}^{1}$ be such that $u(0)=z$. In view
of (\ref{PropF}) the function $(F\left(  u\left(  t\right)  \right)  ,1)$ is
bounded above. Hence, $E\left(  u\left(  t\right)  \right)  \rightarrow l$, as
$t\rightarrow-\infty$, for some $l\in\mathbb{R}.$ As before, suppose that
there exist $\varepsilon>0$ and a sequence $u\left(  t_{n}\right)  $,
$t_{n}\rightarrow\infty$, such that
\[
\mathrm{dist}_{L^{2}(\Omega)}(u(-t_{n}),\mathfrak{R})>\varepsilon,
\]
and we have that $u\left(  -t_{n}\right)  \rightarrow y$ in $H_{0}^{1}\left(
\Omega\right)  $, $E\left(  y\right)  =l$. Also, for a fixed $t>0$ there
exists $v\in K_{r}^{+}$ and a subsequence of $v_{n}\left(
\text{\textperiodcentered}\right)  =u\left(  \text{\textperiodcentered}%
-t_{n}\right)  $ (denoted $v_{n}$ again) such that $v\left(  0\right)  =y$ and
$v_{n}\left(  t\right)  \rightarrow v\left(  t\right)  =z$ in $H_{0}%
^{1}\left(  \Omega\right)  $. Hence, $E\left(  v_{n}\left(  t\right)  \right)
\rightarrow E(z)$ implies that $E\left(  z\right)  =l$ and therefore, arguing
as before, $y\in\mathfrak{R}$, which is a contradiction. As before, we obtain
that $\Theta_{r}=M_{r}^{+}(\mathfrak{R}).$

Finally, let us prove that the convergence takes place in $H_{0}^{1}\left(
\Omega\right)  $. Let us suppose that there exist $\varepsilon>0$ and a
sequence $u\left(  t_{n}\right)  $, $t_{n}\rightarrow+\infty$, such that
\[
\mathrm{dist}_{H_{0}^{1}(\Omega)}(u(t_{n}),\mathfrak{R})>\varepsilon.
\]
In view of $\mathrm{dist}_{L^{2}(\Omega)}(u(t_{n}),\mathfrak{R})\rightarrow0$
and the compacity of $\mathfrak{R}$, we can extract a subsequence $u(t_{n_{k}%
})$ such that $u(t_{n_{k}})\rightarrow\overline{u}\in\mathfrak{R}$ in
$L^{2}\left(  \Omega\right)  $. It follows from the compacity of $\Theta_{r}$
in $H_{0}^{1}\left(  \Omega\right)  $ that in fact $u(t_{n_{k}})\rightarrow
\overline{u}\in\mathfrak{R}$ in $H_{0}^{1}\left(  \Omega\right)  $, which is a
contradiction. Hence, the first part of (\ref{StrH1}) holds. The second one is
proved in the same way.
\end{proof}

\bigskip

\section{An attractor in $H_{0}^{1}\left(  \Omega\right)  $. Existence and
structure of the global attractor for strong solutions\label{StrStrong}}

In this section we shall define a semiflow in the phase space $H_{0}%
^{1}\left(  \Omega\right)  $. For this aim we introduce now a stronger concept
of solution for (\ref{1}).

The function $u\in L_{loc}^{2}(0,+\infty;H_{0}^{1}(\Omega))\bigcap L_{loc}%
^{4}(0,+\infty;L^{4}(\Omega))$ is called a strong solution of (\ref{1}) on
$(0,+\infty)$ if for all $T>0,\,v\in H_{0}^{1}(\Omega)\,\ $and $\eta\in
C_{0}^{\infty}(0,T)$ we have that (\ref{Eq1}) holds and
\begin{align}
u  &  \in L^{\infty}\left(  0,T;H_{0}^{1}\left(  \Omega\right)  \right)
,\label{LInfStrong}\\
u_{t}  &  \in L^{2}\left(  0,T;L^{2}\left(  \Omega\right)  \right)
,\ \forall\text{ }T>0. \label{DerivStrong}%
\end{align}

Arguing as in Section \ref{StrReg} we obtain that any strong solution $u$
satisfies
\begin{equation}
u\in L^{2}\left(  0,T;D\left(  A\right)  \right)  . \label{L2DAStrong}%
\end{equation}

By Lemma \ref{lem:4} for any $u_{0}\in H_{0}^{1}\left(  \Omega\right)  $ there
exists at least one strong solution $u\left(  \text{\textperiodcentered
}\right)  $ such that $u\left(  0\right)  =u_{0}$. Moreover, any strong
solution satisfies good properties, as given in the following lemma.

\begin{lemma}
\label{PropStrongSol}Let (\ref{2}) hold. Then every strong solution of
(\ref{1}) satisfies the following properties:%
\begin{equation}
u\in C([0,+\infty),H_{0}^{1}\left(  \Omega\right)  ), \label{Cont}%
\end{equation}%
\begin{equation}
\frac{d}{dt}\left\Vert u\right\Vert _{H_{0}^{1}\left(  \Omega\right)  }%
^{2}=2\left(  -\Delta u,u_{t}\right)  \text{ for a.a. }t>0,
\label{DerivStrong2}%
\end{equation}%
\begin{equation}
E\left(  u\left(  t\right)  \right)  +2\int_{s}^{t}\left\Vert u_{r}\right\Vert
^{2}dr=E\left(  u\left(  s\right)  \right)  \text{, for all }t\geq s\geq0,
\label{Energy2}%
\end{equation}
where $E\left(  u\left(  t\right)  \right)  =\left\Vert u\left(  t\right)
\right\Vert _{H_{0}^{1}\left(  \Omega\right)  }^{2}+2\left(  F\left(  u\left(
t\right)  \right)  ,1\right)  -2\left(  h,u\left(  t\right)  \right)  .$ Also,
there exist $R_{1},R_{2}>0$ such that
\begin{equation}
\int_{0}^{t}\left\Vert u_{r}\right\Vert ^{2}dr+\left\Vert u\left(  t\right)
\right\Vert _{H_{0}^{1}\left(  \Omega\right)  }^{2}\leq R_{1}\left(
\left\Vert u_{0}\right\Vert _{H_{0}^{1}\left(  \Omega\right)  }^{4}+1\right)
\text{,} \label{PropStrong1}%
\end{equation}%
\begin{equation}
\int_{0}^{t}\left\Vert \Delta u^{n}\right\Vert ^{2}dt\leq R_{2}\left(
t+1\right)  \left(  1+\left\Vert u_{0}\right\Vert _{H_{0}^{1}\left(
\Omega\right)  }^{12}\right)  ,\text{ for all }t\geq0. \label{PropStrong2}%
\end{equation}

\end{lemma}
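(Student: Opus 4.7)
The plan is to transfer the arguments already used for Lemma~\ref{lem:4} and Theorem~\ref{ExistenceRegSol} to the strong setting, exploiting that now the regularity (\ref{LInfStrong})--(\ref{DerivStrong}) holds uniformly on $[0,T]$ rather than only on $[\varepsilon,T]$. In particular, the equality $\Delta u=u_t+f(u)-h$ together with (\ref{LInfStrong}), (\ref{DerivStrong}), the growth condition (\ref{2}) and the embedding $H_0^1(\Omega)\subset L^6(\Omega)$ gives $f(u)\in L^\infty(0,T;L^2(\Omega))$, hence (\ref{L2DAStrong}) holds up to $t=0$.

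First I would establish (\ref{Cont}) and (\ref{DerivStrong2}). Since $u\in L^\infty(0,T;H_0^1(\Omega))\cap L^2(0,T;D(A))$ with $u_t\in L^2(0,T;L^2(\Omega))$, the classical regularity result \cite[p.102]{SellYou} that was invoked at the end of the proof of Lemma~\ref{lem:4} applies on the closed interval $[0,T]$ and delivers both $u\in C([0,T],H_0^1(\Omega))$ and the identity (\ref{DerivStrong2}). Combined with the chain-rule identity $\frac{d}{dt}(F(u(t)),1)=(f(u(t)),u_t(t))$ for a.a.\ $t>0$ (justified by the same mollification argument outlined for (\ref{DerivF}) in the proof of Theorem~\ref{ExistenceRegSol}, which is legitimate because $u\in L^4(0,T;L^4(\Omega))$, $f(u)\in L^2(0,T;L^2(\Omega))$ and $u_t\in L^2(0,T;L^2(\Omega))$), pairing the equation with $u_t$ yields
\[
2\|u_t(t)\|^2+\frac{d}{dt}\!\left(\|u\|_{H_0^1(\Omega)}^2+2(F(u),1)-2(h,u)\right)=0\quad\text{for a.a. }t>0.
\]
Integrating this identity over $[s,t]$ with $0\le s\le t$, which is permitted because the bracketed function is continuous up to $s=0$ by (\ref{Cont}), produces (\ref{Energy2}).

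For (\ref{PropStrong1}) I would integrate the energy identity over $[0,t]$, bound $2(F(u),1)$ from above via (\ref{PropF}) and from below similarly, and use Poincar\'{e} together with the continuous embedding $H_0^1\hookrightarrow L^4$ to absorb the $(F(u_0),1)$ and $(h,u_0)$ contributions into $R_1(\|u_0\|_{H_0^1(\Omega)}^4+1)$; the estimate already appears in (\ref{14}) for the Galerkin solutions constructed in Lemma~\ref{lem:4} and the same passage to the limit (or alternatively a direct application of the energy equality to an arbitrary strong solution) carries it over. Finally, for (\ref{PropStrong2}) I would use $\Delta u=u_t+f(u)-h$, the growth bound $\|f(u)\|^2\le K(1+\|u\|_{H_0^1(\Omega)}^6)$ from (\ref{2}) and $H_0^1\subset L^6$, so that $\|\Delta u\|^2\le 3(\|u_t\|^2+\|f(u)\|^2+\|h\|^2)$, and integrate over $[0,t]$ using (\ref{PropStrong1}) to obtain the claimed bound of order $(t+1)(1+\|u_0\|_{H_0^1(\Omega)}^{12})$.

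The main obstacle I expect is the careful justification of the chain rule $\frac{d}{dt}(F(u),1)=(f(u),u_t)$ and of (\ref{DerivStrong2}) at the endpoint $t=0$, since everything must be upgraded from $(\varepsilon,T)$ to the closed interval $[0,T]$. This is precisely where (\ref{LInfStrong})--(\ref{DerivStrong}) (which impose the regularity from $t=0$, unlike the regular-solution analogues (\ref{LInfH1})--(\ref{DerivL2})) is essential: it ensures $u\in H^1(0,T;L^2(\Omega))\cap L^2(0,T;D(A))$, which in turn yields the continuity $u\in C([0,T],H_0^1(\Omega))$ needed to pass (\ref{Energy2}) down to $s=0$.
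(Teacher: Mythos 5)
Your proposal is correct and follows essentially the same route as the paper: the regularity (\ref{LInfStrong})--(\ref{DerivStrong}) plus the equation give $u\in L^{2}(0,T;D(A))$, the result of \cite[p.102]{SellYou} yields (\ref{Cont}) and (\ref{DerivStrong2}) together with the chain rule for $(F(u),1)$, multiplying by $u_{t}$ and integrating gives (\ref{Energy2}), and (\ref{PropF}) together with the growth bound on $f$ and the identity $\Delta u=u_{t}+f(u)-h$ produce (\ref{PropStrong1}) and (\ref{PropStrong2}). Your extra care about upgrading the regularity to the closed interval $[0,T]$ is exactly the point the paper relies on implicitly.
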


\begin{proof}
In view of (\ref{LInfStrong})-(\ref{DerivStrong}) by standard results
\cite[p.102]{SellYou}, we obtain that $u$ belongs to $C([0,+\infty),H_{0}%
^{1}\left(  \Omega\right)  )$ and (\ref{DerivH1}), (\ref{DerivF}) hold. We
multiply (\ref{1}) by $u_{t}$ and using (\ref{DerivH1}), (\ref{DerivF}) we
obtain%
\[
2\left\Vert u_{t}\right\Vert ^{2}+\frac{d}{dt}\left(  \left\Vert u\right\Vert
_{H_{0}^{1}\left(  \Omega\right)  }^{2}+2\left(  F\left(  u\right)  ,1\right)
-2\left(  h,u\right)  \right)  =0\text{ for a.a. }t\in\left(  0,T\right)  .
\]
Integrating over $\left(  s,t\right)  $ we have
\begin{align*}
&  2\int_{s}^{t}\left\Vert u_{r}\right\Vert ^{2}dr+\left\Vert u\left(
t\right)  \right\Vert _{H_{0}^{1}\left(  \Omega\right)  }^{2}+2\left(
F\left(  u\left(  t\right)  \right)  ,1\right)  -2\left(  h,u\left(  t\right)
\right)  \\
&  =\left\Vert u\left(  s\right)  \right\Vert _{H_{0}^{1}\left(
\Omega\right)  }^{2}+2\left(  F\left(  u\left(  s\right)  \right)  ,1\right)
-2\left(  h,u\left(  s\right)  \right)  .
\end{align*}
Thus we obtain (\ref{Energy2}). Due to (\ref{PropF}) we get
\begin{align*}
&  2\int_{0}^{t}\left\Vert u_{r}\right\Vert ^{2}dr+\left\Vert u\left(
t\right)  \right\Vert _{H_{0}^{1}\left(  \Omega\right)  }^{2}\\
&  \leq\left(  1+\frac{1}{\lambda_{1}}\right)  \left\Vert u_{0}\right\Vert
_{H_{0}^{1}\left(  \Omega\right)  }^{2}+\frac{\lambda_{1}}{2}\left\Vert
u\left(  t\right)  \right\Vert ^{2}+\widetilde{D}_{2}+2D_{1}\int_{\Omega
}\left(  1+\left\vert u_{0}\left(  x\right)  \right\vert ^{4}\right)
dx+\left(  \frac{2}{\lambda_{1}}+1\right)  \left\Vert h\right\Vert ^{2},
\end{align*}
so%
\[
\int_{0}^{t}\left\Vert u_{r}\right\Vert ^{2}dr+\left\Vert u\left(  t\right)
\right\Vert _{H_{0}^{1}\left(  \Omega\right)  }^{2}\leq R_{1}\left(
\left\Vert u_{0}\right\Vert _{H_{0}^{1}\left(  \Omega\right)  }^{4}+1\right)
\text{, for all }t\geq0.
\]
Finally, by%
\begin{align*}
\int_{0}^{T}\int_{\Omega}\left\vert f\left(  u\right)  \right\vert ^{2}dxdt &
\leq\int_{0}^{T}\left(  1+\left\Vert u\left(  t\right)  \right\Vert
_{H_{0}^{1}\left(  \Omega\right)  }^{6}\right)  dt\\
&  \leq T\left(  1+\left(  R_{1}\left(  \left\Vert u_{0}\right\Vert
_{H_{0}^{1}\left(  \Omega\right)  }^{4}+1\right)  \right)  ^{3}\right)
\end{align*}
and the equality $\Delta u^{n}=u_{t}^{n}+f\left(  u^{n}\right)  -h$ we obtain
that
\[
\int_{0}^{T}\left\Vert \Delta u^{n}\right\Vert ^{2}dt\leq R_{2}\left(
T+1\right)  \left(  1+\left\Vert u_{0}\right\Vert _{H_{0}^{1}\left(
\Omega\right)  }^{12}\right)  .
\]
\bigskip
\end{proof}

\bigskip

Let
\[
K_{s}^{+}=\{u\left(  \text{\textperiodcentered}\right)  :u\text{ is a strong
solution of (\ref{1})\}.}%
\]
We define now the map $G_{s}:\mathbb{R}^{+}\times H_{0}^{1}\left(
\Omega\right)  \rightarrow P\left(  H_{0}^{1}\left(  \Omega\right)  \right)  $
by
\[
G_{s}(t,u_{0})=\{u\left(  t\right)  :u\in K_{s}^{+}\text{ and }u\left(
0\right)  =u_{0}\}.
\]

We can check easily that $K_{s}^{+}$ satisfies conditions $\left(  K1\right)
-\left(  K3\right)  $. Then Lemma \ref{MS} implies the following.

\begin{lemma}
Let (\ref{2}) hold. Then $G_{s}$ is a strict multivalued semiflow.
\end{lemma}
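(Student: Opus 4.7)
The plan is to verify hypotheses $\left(K1\right)$--$\left(K3\right)$ for $K_s^+$ and then invoke Lemma~\ref{MS}, which will immediately give the conclusion. I expect $\left(K1\right)$ and $\left(K2\right)$ to be essentially free of content, while $\left(K3\right)$ is the one substantive point and precisely the condition the authors have just noted fails for the regular set $K_r^+$.

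For $\left(K1\right)$, given any $u_0\in H_0^1(\Omega)$ I would invoke Lemma~\ref{lem:4}, which produces a weak solution $u\in C([0,+\infty);H_0^1(\Omega))$ of (\ref{1}) with $u(0)=u_0$ and bounds (\ref{13})--(\ref{14}); these are precisely (\ref{LInfStrong})--(\ref{DerivStrong}), so $u\in K_s^+$. For $\left(K2\right)$, given $u\in K_s^+$ and $\tau\geq 0$, the translate $v(t)=u(t+\tau)$ satisfies (\ref{Eq1}) by a direct change of variables in the test function, and the required integrability on $(0,T)$ is inherited from the integrability of $u$ on the longer interval $(\tau,T+\tau)$.

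The real work is in $\left(K3\right)$: given $u_1,u_2\in K_s^+$ with $u_2(0)=u_1(s)$ for some $s>0$, I need to show that their concatenation $u$ lies in $K_s^+$. The variational identity (\ref{Eq1}) for $u$ on any $(0,T)$ is standard and mirrors the corresponding verification made implicitly for weak solutions earlier in the paper: one decomposes a test function $\eta\in C_0^\infty(0,T)$ into pieces supported in $(0,s)$ and $(s,T)$, using that $u\in C([0,+\infty);L^2(\Omega))$ ensures no boundary contribution appears at $t=s$. For the regularity (\ref{LInfStrong})--(\ref{DerivStrong}) I would simply paste together the bounds on the two sub-intervals; the crucial point is that strong solutions enjoy these bounds all the way down to $t=0$, so $u_1\in L^\infty(0,s;H_0^1(\Omega))$ with $u_{1t}\in L^2(0,s;L^2(\Omega))$ combines with the analogous bounds for $u_2$ on $(0,T-s)$ to yield $u\in L^\infty(0,T;H_0^1(\Omega))$ and $u_t\in L^2(0,T;L^2(\Omega))$.

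The reason I expect no genuine obstacle is precisely this last observation: $\left(K3\right)$ fails for $K_r^+$ because regular-solution bounds only hold on $(\varepsilon,T)$ and degenerate as $\varepsilon\downarrow 0$, so the concatenated function is not controlled in a neighbourhood of $t=s$ when viewed on $(\varepsilon,T)$ with $\varepsilon<s$. Upgrading to strong solutions removes that small-time defect, and the same concatenation argument goes through. Once $\left(K1\right)$--$\left(K3\right)$ are established, Lemma~\ref{MS} directly yields that $G_s$ is a strict multivalued semiflow.
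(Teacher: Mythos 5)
Your proposal is correct and follows the same route as the paper, which simply notes that $K_{s}^{+}$ satisfies $\left(K1\right)$--$\left(K3\right)$ and invokes Lemma~\ref{MS}; in fact you supply more detail than the paper does, and your observation that the concatenation works precisely because the strong-solution bounds (\ref{LInfStrong})--(\ref{DerivStrong}) hold down to $t=0$ (unlike the regular case) is exactly the right point.
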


\bigskip

We shall obtain further some properties of the semiflow $G_{s}.$

\begin{lemma}
\label{ContLemmaStrong}Assume that (\ref{2}) holds. Let $\{u^{n}\}\subset
K_{s}^{+}$ be a sequence such that $u^{n}(0)\rightarrow u_{0}$ weakly in
$H_{0}^{1}(\Omega)$. Then there exists a subsequence (denoted again by $u^{n}%
$), and a strong solution of (\ref{1}) $u\in K_{s}^{+}$ satisfying
$u(0)=u_{0},$ such that for any sequence of times $t_{n}\geq0$ such that
$t_{n}\rightarrow t_{0}$ we have $u^{n}(t_{n})\rightarrow u(t_{0})$ weakly in
$H_{0}^{1}(\Omega)$. \newline Also, if $t_{0}>0$, then $u^{n}(t_{n}%
)\rightarrow u(t_{0})$ strongly in $H_{0}^{1}\left(  \Omega\right)  $.
\newline Moreover, if $u^{n}(0)\rightarrow u_{0}$ strongly in $H_{0}%
^{1}(\Omega)$, then for $t_{n}\searrow0$ we get $u^{n}(t_{n})\rightarrow
u_{0}$ strongly in $H_{0}^{1}(\Omega)$.
\end{lemma}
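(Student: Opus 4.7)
The plan is to adapt the proof of Lemma~\ref{ContLemma}, exploiting the fact that, for strong solutions, the energy equality~(\ref{Energy2}) is valid down to $s=0$. Since $u^n(0)\rightharpoonup u_0$ in $H_0^1(\Omega)$ the norms $\|u^n(0)\|_{H_0^1(\Omega)}$ are bounded, so (\ref{PropStrong1})--(\ref{PropStrong2}) yield uniform bounds for $\{u^n\}$ in $L^\infty(0,T;H_0^1(\Omega))\cap L^2(0,T;D(A))$ and for $\{u_t^n\}$ in $L^2(0,T;L^2(\Omega))$ on every $[0,T]$. The Aubin--Lions lemma, using the compact embeddings $D(A)\subset H_0^1(\Omega)\subset L^2(\Omega)$, combined with a diagonal extraction, produces a subsequence and a function $u$ with $u^n\rightharpoonup u$ weakly-$\ast$ in $L^\infty(0,T;H_0^1(\Omega))$ and weakly in $L^2(0,T;D(A))$, $u_t^n\rightharpoonup u_t$ weakly in $L^2(0,T;L^2(\Omega))$, and $u^n\to u$ strongly in $L^2(0,T;H_0^1(\Omega))\cap C([0,T],L^2(\Omega))$. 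Passing to the limit in $f(u^n)$ via a.e.\ convergence of $u^n$ and the growth bound in~(\ref{2}), exactly as in Theorem~\ref{ExistenceRegSol}, gives $u\in K_s^+$ with $u(0)=u_0$. The weak $H_0^1$ convergence $u^n(t_n)\rightharpoonup u(t_0)$ for arbitrary $t_n\to t_0\ge 0$ then follows from the strong $L^2$ convergence $u^n(t_n)\to u(t_0)$ (via $C([0,T],L^2)$) and the uniform $H_0^1$ bound.

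For both strong $H_0^1$ claims I would reduce the argument to proving $E(u^n(t_n))\to E(u(t_0))$, where $E$ is the Lyapunov functional of~(\ref{Energy2}). On $H_0^1(\Omega)$ this $E$ is strongly continuous and weakly lower semicontinuous: the quadratic part is weakly lsc, while the lower-order terms $(F(\cdot),1)$ and $(h,\cdot)$ are weakly continuous, because the three-dimensional compact embedding $H_0^1(\Omega)\subset L^4(\Omega)$ combined with $|F(u)|\le D_1(1+|u|^4)$ from~(\ref{PropF}) turns weak $H_0^1$ convergence of $u^n(t_n)$ into strong $L^4$ convergence and hence into convergence of $(F(u^n(t_n)),1)$ and $(h,u^n(t_n))$. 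Once $E(u^n(t_n))\to E(u(t_0))$ is established, weak continuity of the lower-order terms gives $\|u^n(t_n)\|_{H_0^1(\Omega)}^2\to\|u(t_0)\|_{H_0^1(\Omega)}^2$, and combining this norm convergence with the weak $H_0^1$ convergence yields strong $H_0^1$ convergence.

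If $t_0>0$, the strong $L^2(0,T;H_0^1(\Omega))$ convergence from the first paragraph provides, up to a further subsequence, a point $s\in(0,t_0)$ with $u^n(s)\to u(s)$ in $H_0^1(\Omega)$, so $E(u^n(s))\to E(u(s))$. Applying~(\ref{Energy2}) on $[s,t_n]$, using weak lsc of the $L^2$ norm for $u^n_t$ together with $t_n\to t_0$, and subtracting the energy equality for $u$ on $[s,t_0]$ give $\limsup E(u^n(t_n))\le E(u(t_0))$; the matching $\liminf$ is weak lsc of $E$. If instead $t_n\searrow 0$ with $u^n(0)\to u_0$ strongly in $H_0^1(\Omega)$, the crucial input---unavailable for merely regular solutions, where the energy equality requires $s>0$---is that~(\ref{Energy2}) is valid at $s=0$, so $E(u^n(t_n))\le E(u^n(0))\to E(u_0)$ by strong continuity of $E$, and once again weak lsc supplies the matching lower bound.

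The main technical point is the weak continuity of the lower-order part of $E$ on $H_0^1(\Omega)$, which however reduces to the three-dimensional Sobolev embedding together with the subquartic growth of $F$. All remaining steps are streamlined versions of the arguments in Lemma~\ref{ContLemma}, the only essential new ingredient being the validity of the energy identity down to $s=0$ afforded by Lemma~\ref{PropStrongSol}.
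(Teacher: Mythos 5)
Your argument is correct, but it takes a noticeably different route from the paper's for the two strong-convergence claims. The paper simply observes that $K_{s}^{+}\subset K_{r}^{+}$ and imports Lemma \ref{ContLemma} wholesale: the strong $H_{0}^{1}$ convergence for $t_{0}>0$ is already contained there, and for $t_{n}\searrow0$ the paper repeats the same device used in Lemma \ref{ContLemma} --- the auxiliary monotone function $J_{n}(t)=\Vert u^{n}(t)\Vert_{H_{0}^{1}(\Omega)}^{2}+Ct$, which for strong solutions is non-increasing down to $t=0$ because the uniform $L^{\infty}(0,T;H_{0}^{1}(\Omega))$ bound from (\ref{PropStrong1}) controls $\Vert f(u^{n})\Vert^{2}$ on all of $[0,T]$ --- giving $\limsup\Vert u^{n}(t_{n})\Vert_{H_{0}^{1}}^{2}\leq\Vert u_{0}\Vert_{H_{0}^{1}}^{2}$ and hence strong convergence; the only genuinely new work in the paper's proof is checking that the limit $u$ satisfies (\ref{LInfStrong})--(\ref{DerivStrong}), exactly as in your first paragraph. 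You instead run everything through the full Lyapunov functional $E$ and the energy identity (\ref{Energy2}) valid at $s=0$, which forces you to establish weak continuity of the lower-order terms $(F(\cdot),1)$ and $(h,\cdot)$ on $H_{0}^{1}(\Omega)$ via the compact embedding into $L^{4}(\Omega)$ and (\ref{PropF}). That step is correct in dimension three and your identification of the energy equality at $s=0$ as the decisive structural difference from the regular case is exactly the right diagnosis; the paper's $J_{n}$ is essentially a stripped-down energy that sidesteps the nonlinear term entirely, so its argument is shorter, while yours is more self-contained and anticipates the Lyapunov-function machinery used later in Theorem \ref{StructureStrong}. One presentational caution: the point $s\in(0,t_{0})$ with $u^{n}(s)\rightarrow u(s)$ in $H_{0}^{1}(\Omega)$ should be secured by including the a.e.-in-time $H_{0}^{1}$ convergence in the single diagonal extraction of your first paragraph, rather than passing to ``a further subsequence'' inside the argument for a particular sequence $t_{n}$, since the lemma demands one subsequence that works for all $t_{n}\rightarrow t_{0}$; this is a routine fix, not a gap.
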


\begin{proof}
Since obviously $K_{s}^{+}\subset K_{r}^{+}$, it follows from Lemma
\ref{ContLemma} the existence of a regular solution $u\left(
\text{\textperiodcentered}\right)  \in K_{r}^{+}$ with $u\left(  0\right)
=u_{0}$ and a subsequence such that (\ref{ConvergRegular}), (\ref{Converg})
hold and
\begin{align*}
u^{n}\left(  t_{n}\right)   &  \rightarrow u\left(  t_{0}\right)  \text{
strongly in }H_{0}^{1}\left(  \Omega\right)  \text{ if }t_{0}>0,\\
u^{n}\left(  t_{n}\right)   &  \rightarrow u\left(  t_{0}\right)  \text{
strongly in }L^{2}\left(  \Omega\right)  .
\end{align*}
Thus, (\ref{PropStrong1}) implies by a standard argument that
\[
u^{n}\left(  t_{n}\right)  \rightarrow u\left(  t_{0}\right)  \text{ weakly in
}H_{0}^{1}\left(  \Omega\right)  .
\]
It remains to check that $u$ is a strong solution. In view of
(\ref{PropStrong1})-(\ref{PropStrong2}) for all $T>0$ the sequence $u^{n}$ is
bounded in $L^{\infty}\left(  0,T;H_{0}^{1}\left(  \Omega\right)  \right)
\cap L^{2}(0,T;D\left(  A\right)  )$ and $u_{t}^{n}$ is bounded in
$L^{2}(0,T;L^{2}\left(  \Omega\right)  )$. Hence, $u^{n}\rightarrow u$ weakly
star in $L^{\infty}\left(  0,T;H_{0}^{1}\left(  \Omega\right)  \right)  $,
weakly in $L^{2}(0,T;D\left(  A\right)  )$ and $u_{t}^{n}\rightarrow u_{t}$
weakly in $L^{2}(0,T;L^{2}\left(  \Omega\right)  ),$ so that $u$ satisfies
(\ref{LInfStrong})-(\ref{DerivStrong}) and then $u\in K_{s}^{+}.$

Finally, if $u^{n}(0)\rightarrow u_{0}$ strongly in $H_{0}^{1}(\Omega)$, then
arguing in the same way as in Lemma \ref{ContLemma} we obtain that
$u^{n}(t_{n})\rightarrow u_{0}$ strongly in $H_{0}^{1}(\Omega)$ for
$t_{n}\searrow0.$
\end{proof}

\bigskip

\begin{corollary}
\label{K4Strong}Let (\ref{2}) hold. Then $K_{s}^{+}$ satisfies condition
$\left(  K4\right)  .$
\end{corollary}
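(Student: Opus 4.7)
The plan is to derive this corollary directly from Lemma \ref{ContLemmaStrong}, which has already done essentially all of the work; the only thing left to verify is that the convergence it provides is in the right space at every $t\geq 0$ along a single subsequence.

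First, I would recall that the phase space for $G_s$ is $H_0^1(\Omega)$, so condition $(K4)$ for $K_s^+$ reads: whenever $u^n\in K_s^+$ satisfies $u^n(0)\to u_0$ in $H_0^1(\Omega)$, there must exist a subsequence $u^{n_k}$ and $u\in K_s^+$ with $u^{n_k}(t)\to u(t)$ in $H_0^1(\Omega)$ for every $t\geq 0$. Given such a sequence, I would simply apply Lemma \ref{ContLemmaStrong} to the (strong) convergence $u^n(0)\to u_0$ in $H_0^1(\Omega)$ to extract a subsequence (still denoted $u^n$) and a strong solution $u\in K_s^+$ with $u(0)=u_0$ enjoying the two conclusions: $u^n(t_n)\to u(t_0)$ strongly in $H_0^1(\Omega)$ whenever $t_n\to t_0>0$, and $u^n(t_n)\to u_0$ strongly in $H_0^1(\Omega)$ whenever $t_n\searrow 0$.

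Next, to deduce $(K4)$, I would fix an arbitrary $t\geq 0$ and apply the above with the constant sequence $t_n\equiv t$. For $t>0$ the first alternative gives $u^n(t)\to u(t)$ in $H_0^1(\Omega)$; for $t=0$ the hypothesis itself already gives $u^n(0)=\varphi^n(0)\to u_0=u(0)$ in $H_0^1(\Omega)$ (and this is also consistent with the $t_n\searrow 0$ clause of the lemma). Since the same subsequence and the same limit function $u\in K_s^+$ work for every $t\geq 0$, condition $(K4)$ is verified.

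There is essentially no obstacle here: the genuine analytic content—passing to the limit in the nonlinearity and recovering the regularity $u\in L^\infty(0,T;H_0^1(\Omega))\cap L^2(0,T;D(A))$, $u_t\in L^2(0,T;L^2(\Omega))$ of a strong solution, together with the strong continuity at $t=0$ afforded by the monotonicity of $t\mapsto \|u^n(t)\|^2+Ct$—has already been carried out in Lemma \ref{ContLemmaStrong} using the uniform bounds (\ref{PropStrong1})–(\ref{PropStrong2}). The proof of the corollary is therefore just the observation that specialising to constant time sequences converts the lemma into exactly the statement of $(K4)$.
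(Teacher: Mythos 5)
Your proof is correct and is exactly the route the paper intends: the corollary is stated there as an immediate consequence of Lemma \ref{ContLemmaStrong}, obtained by specialising to constant time sequences (strong convergence in $H_0^1(\Omega)$ for $t>0$, and the hypothesis itself at $t=0$). Nothing further is needed.
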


\begin{corollary}
\label{USCStrong}Let (\ref{2}) hold. Then the multivalued semiflow $G_{s}$ has
compact values and the map $u_{0}\mapsto G_{s}\left(  t,u_{0}\right)  $ is
upper semicontinuous for all $t\geq0$.
\end{corollary}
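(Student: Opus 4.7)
The plan is to obtain both assertions directly from Lemma \ref{ContLemmaStrong} by the standard arguments that were used in the regular case (Corollary \ref{USC}).

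First I would verify compactness of the values. Fix $u_{0}\in H_{0}^{1}(\Omega)$ and $t\geq0$. For $t=0$ the set $G_{s}(0,u_{0})=\{u_{0}\}$ is trivially compact, so assume $t>0$. Take an arbitrary sequence $y^{n}\in G_{s}(t,u_{0})$; by the definition of $G_{s}$ there exist $u^{n}\in K_{s}^{+}$ with $u^{n}(0)=u_{0}$ and $u^{n}(t)=y^{n}$. Applying Lemma \ref{ContLemmaStrong} to the constant sequence $u^{n}(0)=u_{0}$ and to the constant sequence of times $t_{n}\equiv t>0$, one extracts a subsequence and a strong solution $u\in K_{s}^{+}$ with $u(0)=u_{0}$ such that $u^{n_{k}}(t)\to u(t)$ strongly in $H_{0}^{1}(\Omega)$. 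Hence $y^{n_{k}}\to u(t)\in G_{s}(t,u_{0})$, proving compactness.

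For upper semicontinuity, I would argue by contradiction. Suppose there exist $t\geq0$, $u_{0}\in H_{0}^{1}(\Omega)$ and an open neighbourhood $O$ of $G_{s}(t,u_{0})$ in $H_{0}^{1}(\Omega)$, together with sequences $v^{n}\to u_{0}$ in $H_{0}^{1}(\Omega)$ and $y^{n}\in G_{s}(t,v^{n})\setminus O$. Write $y^{n}=u^{n}(t)$ with $u^{n}\in K_{s}^{+}$ and $u^{n}(0)=v^{n}$. For $t>0$, Lemma \ref{ContLemmaStrong} applied to $u^{n}(0)=v^{n}\to u_{0}$ yields a subsequence and some $u\in K_{s}^{+}$ with $u(0)=u_{0}$ such that $u^{n_{k}}(t)\to u(t)$ strongly in $H_{0}^{1}(\Omega)$. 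For $t=0$ the last assertion of Lemma \ref{ContLemmaStrong} (with $t_{n}\equiv 0$) gives $u^{n}(0)=v^{n}\to u_{0}$, i.e.\ $y^{n}\to u_{0}\in G_{s}(0,u_{0})$. In either case the limit lies in $G_{s}(t,u_{0})\subset O$; since $O$ is open this forces $y^{n_{k}}\in O$ for large $k$, contradicting $y^{n}\notin O$.

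There is no real obstacle here: the content is entirely packaged in Lemma \ref{ContLemmaStrong}, and the only point that requires a moment of attention is the case $t=0$ in the upper semicontinuity statement, which is handled by the ``strong convergence at $t_{n}\searrow 0$'' clause of that lemma. The argument mirrors verbatim the passage from Lemma \ref{ContLemma} to Corollary \ref{USC}, with $L^{2}(\Omega)$ replaced throughout by $H_{0}^{1}(\Omega)$.
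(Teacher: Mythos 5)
Your argument is correct and is exactly the standard deduction from Lemma \ref{ContLemmaStrong} that the paper has in mind: the corollary is stated without proof there, mirroring the passage from Lemma \ref{ContLemma} to Corollary \ref{USC}, and your compactness-of-values and contradiction arguments (including the separate treatment of $t=0$) fill in precisely those routine steps.
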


We prove further the existence of a global compact attractor.

\begin{theorem}
\label{AttrStrong}Let (\ref{2}) hold. Then the multivalued semiflow $G_{s}$
posseses a global compact invariant attractor $\Theta_{s}$.
\end{theorem}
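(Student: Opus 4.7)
The plan is to invoke a Melnik--Valero type existence theorem for global attractors of multivalued semiflows, which requires three ingredients: upper semicontinuity with compact values, the existence of a bounded absorbing set in the phase space $H_0^1(\Omega)$, and asymptotic compactness with respect to the $H_0^1$ topology. The first item is already given by Corollary \ref{USCStrong}, so the real work is to produce an absorbing set and to verify asymptotic compactness. Invariance will then come for free from the strictness of $G_s$.

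For the absorbing set I would exploit that every strong solution is in particular a regular solution, so all the estimates of Theorem \ref{ExistenceRegSol} apply to every $u \in K_s^+$. Taking $r=1$ in (\ref{PropReg1}) and using that any bounded set $B \subset H_0^1(\Omega)$ is bounded in $L^2(\Omega)$, I obtain a uniform bound $\|u(t+1)\|_{H_0^1(\Omega)}^2 \leq K$ for all $t \geq T(B)$ whenever $u(0) \in B$. Hence the closed ball $B_0 = \{u \in H_0^1(\Omega) : \|u\|_{H_0^1(\Omega)}^2 \leq K\}$ is a bounded absorbing set for $G_s$ in $H_0^1(\Omega)$. Note that $B_0$ is only bounded (not compact) in $H_0^1$, so absorbing alone is not enough and we must check asymptotic compactness separately.

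For asymptotic compactness, let $\xi_n \in G_s(t_n,B)$ with $t_n \to +\infty$ and $B$ bounded in $H_0^1(\Omega)$. Write $\xi_n = u_n(t_n)$ with $u_n \in K_s^+$ and $u_n(0) \in B$. By the translation property $(K2)$ the shifted functions $v_n(\cdot) = u_n(\cdot + t_n - 1)$ lie in $K_s^+$ and, for $n$ large, satisfy $v_n(0) \in B_0$, so $\{v_n(0)\}$ is bounded in $H_0^1(\Omega)$. Extracting a weakly convergent subsequence of initial data and applying Lemma \ref{ContLemmaStrong} at $t_0 = 1 > 0$, a subsequence of $v_n(1) = \xi_n$ converges strongly in $H_0^1(\Omega)$. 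This is the desired asymptotic compactness, and it is really the only place where the improvement from $L^2$ to $H_0^1$ convergence is used; this is the step I expect to be the main technical point, although the tools have already been prepared.

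With upper semicontinuity, compact values, a bounded absorbing set and asymptotic compactness in $H_0^1(\Omega)$, the existence of a global compact attractor $\Theta_s \subset H_0^1(\Omega)$ follows from the standard multivalued attractor theorem (as in Theorem 4 of \cite{MelnikValero98}, invoked also in Theorem \ref{AttrReg}). For full invariance $\Theta_s = G_s(t,\Theta_s)$, I would use that $G_s$ is strict together with the inclusion
\[
G_s(t,\Theta_s) \subset G_s(t,G_s(\tau,\Theta_s)) = G_s(t+\tau,\Theta_s) \to \Theta_s \quad \text{as } \tau \to +\infty,
\]
which, combined with the negative semi-invariance coming from the definition of the attractor, yields $\Theta_s = G_s(t,\Theta_s)$ for every $t \geq 0$.
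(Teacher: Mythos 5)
Your proposal is correct and follows essentially the same route as the paper: the absorbing ball comes from $G_s\subset G_r$ and (\ref{PropReg1}) with $r=1$ (Lemma \ref{AbsBall}), the compactness comes from Lemma \ref{ContLemmaStrong} applied at a positive time, existence follows from Theorem 4 of \cite{MelnikValero98} via Corollary \ref{USCStrong}, and invariance from strictness. The only cosmetic difference is that the paper packages the compactness as a compact absorbing set $\overline{G_s(1,B_0)}^{H_0^1}$ rather than as a separate asymptotic compactness check, and cites Remark 8 of \cite{MelnikValero98} for the invariance argument you write out explicitly.
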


\begin{proof}
Since $G_{s}\left(  t,u_{0}\right)  \subset G_{r}\left(  t,u_{0}\right)  $,
the ball $B_{0}$ defined in Lemma \ref{AbsBall} is absorbing for $G_{s}$.
Also, the operator $G_{s}\left(  t,\text{\textperiodcentered}\right)  $ is
compact for $t>0$ by Lemma \ref{ContLemmaStrong}, so that $K=\overline
{G_{s}(1,B_{0})}^{H_{0}^{1}}$ is a compact absorbing set. Then using Corollary
\ref{USCStrong} the existence of a global compact minimal attractor
$\Theta_{s}$ follows from Theorem 4 in \cite{MelnikValero98}. As $G_{s}$ is
strict, it follows from Remark 8 in \cite{MelnikValero98} that $G_{s}\left(
t,\Theta_{s}\right)  =\Theta_{s}$ for all $t\geq0.$
\end{proof}

\bigskip

We will prove further that in fact the attractors $\Theta_{s}$ and $\Theta
_{r}$ coincide.

\begin{lemma}
\label{EqAttr}Let (\ref{2}) hold. Then $\Theta_{s}=\Theta_{r}.$
\end{lemma}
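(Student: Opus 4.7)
The plan is to prove the two inclusions $\Theta_s\subset\Theta_r$ and $\Theta_r\subset\Theta_s$ separately, in both cases using the characterization of the attractor as the union of points lying on a bounded complete trajectory. For $\Theta_r$ this is Theorem \ref{CaractAttrR}; for $\Theta_s$ one applies Theorem \ref{CaractAttrAbs} to $K_s^+$ in the phase space $H_0^1(\Omega)$, for which axioms $(K1)$--$(K3)$ are already verified and $(K4)$ is Corollary \ref{K4Strong}.

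The easy direction $\Theta_s\subset\Theta_r$ comes from the observation that conditions (\ref{LInfStrong})--(\ref{DerivStrong}) defining a strong solution are strictly stronger than conditions (\ref{LInfH1})--(\ref{DerivL2}) defining a regular solution, so $K_s^+\subset K_r^+$. Hence a complete trajectory of $K_s^+$ uniformly bounded in $H_0^1(\Omega)$ is, via the continuous embedding $H_0^1(\Omega)\subset L^2(\Omega)$, also a complete trajectory of $K_r^+$ bounded in $L^2(\Omega)$, and its value at $0$ lies in $\Theta_r$ by Theorem \ref{CaractAttrR}.

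For the reverse inclusion I would take $z\in\Theta_r$ and invoke Theorem \ref{CaractAttrR} together with Lemma \ref{EqBoundedComplete} to obtain a complete trajectory $\gamma\in\mathbb{K}_r=\mathbb{K}_r^1$ of $K_r^+$ with $\gamma(0)=z$ that is uniformly bounded in $H_0^1(\Omega)$ on all of $\mathbb{R}$. I would then upgrade $\gamma$ to a complete trajectory of $K_s^+$, which is the main obstacle of the proof: the shift $u_h(\cdot):=\gamma(\cdot+h)|_{[0,\infty)}$ is only a priori a regular solution, whose bounds (\ref{LInfH1})--(\ref{DerivL2}) hold on $[\varepsilon,T]$ with $\varepsilon>0$, whereas strong solutions require the same bounds starting from $t=0$. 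The $L^\infty(0,T;H_0^1)$ requirement is immediate from the uniform $H_0^1$-bound on $\gamma$. For the $L^2(0,T;L^2)$ bound on $u_{h,t}$, I would exploit completeness: the further shifted trajectory $\tilde u(t):=\gamma(t+h-1)$ is itself a regular solution on $[0,\infty)$, so applying (\ref{DerivL2}) to $\tilde u$ with $\varepsilon=1$ and $T'=T+1$ gives
\[
\int_0^T\|u_{h,t}(s)\|^2\,ds=\int_1^{T+1}\|\tilde u_t(s)\|^2\,ds<\infty.
\]
Since the weak formulation (\ref{Eq1}) is common to both notions, this shows $u_h\in K_s^+$ for every $h\in\mathbb{R}$.

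Consequently $\gamma$ is a complete trajectory of $K_s^+$ bounded in $H_0^1(\Omega)$, and Theorem \ref{CaractAttrAbs} applied in the phase space $H_0^1(\Omega)$ yields $z=\gamma(0)\in\Theta_s$, which finishes the argument.
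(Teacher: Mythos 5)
Your proof is correct and follows essentially the same route as the paper: the inclusion $\Theta_s\subset\Theta_r$ from $K_s^+\subset K_r^+$, and the reverse inclusion by taking $\gamma\in\mathbb{K}_r=\mathbb{K}_r^1$ through $z\in\Theta_r$ and observing that every shift $\gamma(\cdot+h)|_{[0,+\infty)}$ is in fact a strong solution, so that $z$ lies on a bounded complete trajectory of $K_s^+$. The only (welcome) difference is that you spell out the step the paper dismisses with ``it is easy to see'' --- deducing $\gamma_t\in L^2(0,T;L^2(\Omega))$ from the shifted trajectory via (\ref{DerivL2}) with $\varepsilon=1$ --- and that you conclude via the trajectory characterization of $\Theta_s$ (Theorem \ref{CaractAttrAbs} applied to $K_s^+$) where the paper uses the attraction property of $\Theta_s$; both conclusions are legitimate and rest on the same verified hypotheses.
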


\begin{proof}
Since $G_{s}\left(  t,u_{0}\right)  \subset G_{r}\left(  t,u_{0}\right)  $, we
have that $\Theta_{s}\subset\Theta_{r}$.

Conversely, if $z\in\Theta_{r}$, then by (\ref{BoundedComplete}) we have that
$z=\gamma\left(  0\right)  $, where $\gamma\left(  \text{\textperiodcentered
}\right)  \in\mathbb{K}_{r}^{1}$, the set of all bounded (in $H_{0}^{1}\left(
\Omega\right)  $) complete trajectories corresponding to $K_{r}^{+}$. It is
easy to see that $\gamma\mid_{\lbrack\tau,+\infty)}$ is a strong solution for
any $\tau\in\mathbb{R}$. Hence, $z=\gamma\left(  0\right)  \in G_{s}\left(
t_{n},\gamma\left(  -t_{n}\right)  \right)  $ for $t_{n}\rightarrow+\infty$.
Hence,%
\[
dist_{H_{0}^{1}\left(  \Omega\right)  }\left(  z,\Theta_{s}\right)  \leq
dist_{H_{0}^{1}\left(  \Omega\right)  }\left(  G_{s}\left(  t_{n}%
,\gamma\left(  -t_{n}\right)  \right)  ,\Theta_{s}\right)  \rightarrow0,
\]
so that $z\in\Theta_{s}$.
\end{proof}

\bigskip

The map $\gamma:\mathbb{R}\rightarrow H_{0}^{1}\left(  \Omega\right)  $ is
called a complete trajectory of $G_{s}$ if $\gamma\left(
\text{\textperiodcentered}+h\right)  \mid_{\lbrack0,+\infty)}\in K_{s}^{+}$
for any $h\in\mathbb{R}.$ The set of all complete trajectories of $K_{s}^{+}$
will be denoted by $\mathbb{F}_{s}$. Let $\mathbb{K}_{s}$ be the set of all
complete trajectories which are bounded in $H_{0}^{1}\left(  \Omega\right)  $.

\begin{lemma}
\label{EqK}Let (\ref{2}) hold. Then $\mathbb{K}_{s}=\mathbb{K}_{r}%
^{1}=\mathbb{K}_{r}.$
\end{lemma}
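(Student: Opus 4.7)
The plan is to reduce the chain $\mathbb{K}_{s}=\mathbb{K}_{r}^{1}=\mathbb{K}_{r}$ to two separate inclusions, the second equality being already available.

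First I would invoke Lemma~\ref{EqBoundedComplete}, which gives $\mathbb{K}_{r}=\mathbb{K}_{r}^{1}$ directly, so only the equality $\mathbb{K}_{s}=\mathbb{K}_{r}^{1}$ remains. For $\mathbb{K}_{s}\subset\mathbb{K}_{r}^{1}$ I would simply observe that every strong solution is a regular solution: conditions (\ref{LInfStrong})--(\ref{DerivStrong}) imply (\ref{LInfH1})--(\ref{DerivL2}) for every $0<\varepsilon<T$, so $K_{s}^{+}\subset K_{r}^{+}$. Hence any $\gamma\in\mathbb{K}_{s}$ is a complete trajectory of $K_{r}^{+}$ whose range is bounded in $H_{0}^{1}(\Omega)$.

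The converse $\mathbb{K}_{r}^{1}\subset\mathbb{K}_{s}$ is the only point where something must be checked. Given $\gamma\in\mathbb{K}_{r}^{1}$, I need to show that for each $h\in\mathbb{R}$ the shifted function $u(t):=\gamma(t+h)$, $t\geq 0$, lies in $K_{s}^{+}$, i.e.\ satisfies (\ref{Eq1}), (\ref{LInfStrong}) and (\ref{DerivStrong}). Equation (\ref{Eq1}) is immediate since $u$ is a weak solution (regular solutions are weak solutions). For (\ref{LInfStrong}) I would use the hypothesis that $\gamma$ is globally bounded in $H_{0}^{1}(\Omega)$, which yields $u\in L^{\infty}(0,T;H_{0}^{1}(\Omega))$ for every $T>0$. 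The key step is (\ref{DerivStrong}): fix $h\in\mathbb{R}$ and apply the regular-solution property to the shift at $h':=h-1$, which lies in $\mathbb{F}_{r}$ by the definition of a complete trajectory of $K_{r}^{+}$. By (\ref{DerivL2}) applied with $\varepsilon=1/2$ and arbitrary $T'>0$, one obtains $\gamma_{t}\in L^{2}(h-\tfrac{1}{2},h+T;L^{2}(\Omega))$ for all $T>0$, hence $u_{t}\in L^{2}(0,T;L^{2}(\Omega))$ for all $T>0$. This establishes $u\in K_{s}^{+}$; since $h$ was arbitrary, $\gamma$ is a complete trajectory of $K_{s}^{+}$, and its $H_{0}^{1}(\Omega)$-boundedness is inherited from membership in $\mathbb{K}_{r}^{1}$, so $\gamma\in\mathbb{K}_{s}$.

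There is no real obstacle here; the only subtle point is noticing that the ``loss of an initial layer'' in the definition of regular solution (the $\varepsilon>0$ in (\ref{LInfH1})--(\ref{DerivL2})) is harmless for a complete trajectory, because one can always shift backward by a finite amount and absorb that layer in the past. This is what converts a regular trajectory into a strong one on every half-line $[h,+\infty)$.
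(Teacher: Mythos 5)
Your proof is correct and follows essentially the same route as the paper: the paper also gets $\mathbb{K}_{r}=\mathbb{K}_{r}^{1}$ from Lemma \ref{EqBoundedComplete}, notes $\mathbb{K}_{s}\subset\mathbb{K}_{r}^{1}$ from $K_{s}^{+}\subset K_{r}^{+}$, and for $\mathbb{K}_{r}^{1}\subset\mathbb{K}_{s}$ appeals to the observation (made in the proof of Lemma \ref{EqAttr}) that $\gamma|_{[\tau,+\infty)}$ is a strong solution for every $\tau$ --- which is exactly the backward-shift argument you spell out to absorb the initial layer in (\ref{LInfH1})--(\ref{DerivL2}). The only nitpick is the phrase ``$h':=h-1$, which lies in $\mathbb{F}_{r}$'': you mean the shifted trajectory $\gamma(\cdot+h')|_{[0,+\infty)}$ lies in $K_{r}^{+}$, not the number $h'$.
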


\begin{proof}
$\mathbb{K}_{s}\subset\mathbb{K}_{r}^{1}$ is obvious, and $\mathbb{K}_{r}%
^{1}\subset\mathbb{K}_{s}$ follows from the arguments in the proof of Lemma
\ref{EqAttr}. The last equality was done in Lemma \ref{EqBoundedComplete}.
\end{proof}

\bigskip

We shall establish now the same statements of Lemma \ref{EqFixed} and Theorem
\ref{PropK+} for $G_{s}.$

First we can characterize the attractor $\Theta_{s}$ as the union of all
points lying in a bounded complete trajectory.

\begin{lemma}
\label{CaractAttrStrong}Let (\ref{2}) hold. Then we have%
\begin{equation}
\Theta_{s}=\{\gamma\left(  0\right)  :\gamma\left(  \text{\textperiodcentered
}\right)  \in\mathbb{K}_{s}\}=\cup_{t\in\mathbb{R}}\{\gamma\left(  t\right)
:\gamma\left(  \text{\textperiodcentered}\right)  \in\mathbb{K}_{s}\}.
\label{BoundedCompleteStrong}%
\end{equation}

\end{lemma}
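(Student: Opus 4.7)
The plan is to deduce the statement immediately from equalities that have already been established in this section, so no new analytic work should be needed. First, Lemma \ref{EqAttr} identifies $\Theta_{s}$ with $\Theta_{r}$, so it is enough to represent $\Theta_{r}$ in terms of $\mathbb{K}_{s}$. Second, Lemma \ref{EqK} supplies the identification $\mathbb{K}_{s}=\mathbb{K}_{r}^{1}=\mathbb{K}_{r}$. Third, Theorem \ref{CaractAttrR} (combined with Lemma \ref{EqBoundedComplete}) already gives
\[
\Theta_{r}=\{\gamma(0):\gamma\in\mathbb{K}_{r}^{1}\}=\cup_{t\in\mathbb{R}}\{\gamma(t):\gamma\in\mathbb{K}_{r}^{1}\}.
\]
Chaining these three equalities yields (\ref{BoundedCompleteStrong}) at once.

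An alternative self-contained route is to apply the abstract Theorem \ref{CaractAttrAbs} directly to the class $K_{s}^{+}$. Indeed, $K_{s}^{+}$ trivially satisfies $(K1)$--$(K2)$, Corollary \ref{K4Strong} gives $(K4)$, and Theorem \ref{AttrStrong} provides the required compact global attractor $\Theta_{s}$. Theorem \ref{CaractAttrAbs} then yields the first equality in (\ref{BoundedCompleteStrong}) directly, and the second equality is immediate from $(K2)$, because if $\gamma\in\mathbb{K}_{s}$ and $\tau\in\mathbb{R}$, the shift $\gamma(\cdot+\tau)$ also belongs to $\mathbb{K}_{s}$, so $\{\gamma(\tau):\gamma\in\mathbb{K}_{s}\}=\{\widetilde{\gamma}(0):\widetilde{\gamma}\in\mathbb{K}_{s}\}$.

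I expect no real obstacle: the lemma is essentially a corollary of material already proved. The only technical point worth spelling out, if one takes the first route, is the observation (already used implicitly in the proof of Lemma \ref{EqAttr}) that for any $\gamma\in\mathbb{K}_{r}^{1}$ the restriction $\gamma\mid_{[\tau,+\infty)}$ is actually a strong solution, since $\gamma$ is bounded in $H_{0}^{1}(\Omega)$ on the whole real line and estimates (\ref{PropReg1})--(\ref{PropReg3}) then upgrade the required regularity (\ref{LInfH1})--(\ref{DerivL2}) to the uniform conditions (\ref{LInfStrong})--(\ref{DerivStrong}) on every $[\tau,T]$.
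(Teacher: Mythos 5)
Your proposal is correct and follows essentially the same path as the paper, which itself gives both of your routes: it notes that the lemma is a direct consequence of the abstract Theorem \ref{CaractAttrAbs} (or \ref{CaractAttrAbs2}) since $K_{s}^{+}$ satisfies $(K1)$--$(K4)$, and also that it follows from Lemmas \ref{EqAttr}, \ref{EqK} and Theorem \ref{CaractAttrR}. Your closing remark about upgrading the regularity of $\gamma\in\mathbb{K}_{r}^{1}$ to a strong solution is exactly the observation already made in the proof of Lemma \ref{EqAttr}, so nothing is missing.
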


\begin{proof}
As $K_{s}^{+}$ satisfies $\left(  K1\right)  -\left(  K4\right)  $, it is a
direct consequence of either Theorem \ref{CaractAttrAbs} or
\ref{CaractAttrAbs2}. Also, it follows from Lemmas \ref{EqAttr}, \ref{EqK} and
Theorem \ref{CaractAttrR}.
\end{proof}

\bigskip

\begin{lemma}
\label{EqFixedStrong}Let (\ref{2}) hold. Then $\mathfrak{R}=\mathfrak{R}%
_{K_{s}^{+}}.$
\end{lemma}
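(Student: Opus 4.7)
The plan is to argue exactly as in Lemma \ref{EqFixedReg}, showing both inclusions by checking that the constant trajectory $u(t)\equiv u_{0}$ qualifies as a strong solution whenever $u_{0}$ is a stationary point, and conversely that any constant strong solution forces its value to satisfy the stationary equation.

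For the inclusion $\mathfrak{R}_{K_{s}^{+}}\subset\mathfrak{R}$, I would take $u_{0}\in\mathfrak{R}_{K_{s}^{+}}$, so by Definition \ref{DefFixed} the constant function $u(t)\equiv u_{0}$ lies in $K_{s}^{+}$. In particular it is a weak solution, so it satisfies (\ref{Eq1}); since all time derivatives vanish, the weak formulation reduces to $(-\Delta u_{0}+f(u_{0})-h,v)=0$ for every $v\in H_{0}^{1}(\Omega)$, i.e.\ equation (\ref{Stationary}) holds in $H^{-1}(\Omega)$. Thus $u_{0}\in\mathfrak{R}$.

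For the reverse inclusion, let $v_{0}\in\mathfrak{R}$ and consider the constant map $v(t)\equiv v_{0}$. By Lemma \ref{lem:1}, $\mathfrak{R}$ is bounded in $H_{0}^{1}(\Omega)\cap H^{2}(\Omega)$; in particular $v_{0}\in H_{0}^{1}(\Omega)$. Then $v\in L^{\infty}(0,T;H_{0}^{1}(\Omega))$, $v_{t}\equiv 0\in L^{2}(0,T;L^{2}(\Omega))$, and clearly $v\in L_{loc}^{2}(0,+\infty;H_{0}^{1}(\Omega))\cap L_{loc}^{4}(0,+\infty;L^{4}(\Omega))$, so that (\ref{LInfStrong})--(\ref{DerivStrong}) are satisfied. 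The weak formulation (\ref{Eq1}) reduces for a constant function to the requirement that $(-\Delta v_{0}+f(v_{0})-h,v)=0$ for all $v\in H_{0}^{1}(\Omega)$, which is precisely (\ref{Stationary}) and hence holds by the assumption $v_{0}\in\mathfrak{R}$. Therefore $v(\cdot)\in K_{s}^{+}$ and $v_{0}\in\mathfrak{R}_{K_{s}^{+}}$.

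There is no real obstacle here: the only mild subtlety is verifying the regularity conditions (\ref{LInfStrong})--(\ref{DerivStrong}) for the constant trajectory, and this is immediate once Lemma \ref{lem:1} is invoked to place $v_{0}$ in $H_{0}^{1}(\Omega)$. Both inclusions together yield $\mathfrak{R}=\mathfrak{R}_{K_{s}^{+}}$.
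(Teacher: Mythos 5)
Your proof is correct and follows essentially the same route as the paper: both inclusions are obtained by testing the constant trajectory, with Lemma \ref{lem:1} invoked to place $v_{0}$ in $H_{0}^{1}(\Omega)\cap H^{2}(\Omega)$ so that the regularity requirements (\ref{LInfStrong})--(\ref{DerivStrong}) of a strong solution are trivially met. The paper's proof is just a terser version of the same argument.
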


\begin{proof}
Let $u_{0}\in\mathfrak{R}_{K_{s}^{+}}$. Then $u\left(  t\right)  \equiv u_{0}$
belongs to $K_{s}^{+}$. Thus $u\left(  \text{\textperiodcentered}\right)  $
satisfies (\ref{3}), so that (\ref{Stationary}) holds. Conversely, let
$v_{0}\in\mathfrak{R}$. In view of Lemma \ref{lem:1} the set of stationary
points of (\ref{1}) $\mathfrak{R}$ is bounded in $H^{2}\left(  \Omega\right)
\cap H_{0}^{1}\left(  \Omega\right)  $. Then the function $v\left(
\text{\textperiodcentered}\right)  $ defined by $v\left(  t\right)  \equiv
v_{0}$ belongs to $K_{s}^{+}$.
\end{proof}

\begin{corollary}
Let (\ref{2}) hold. Then $\mathfrak{R\subset}\Theta_{s}\mathfrak{.}$
\end{corollary}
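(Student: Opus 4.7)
The plan is to reduce the claim to the characterization of $\Theta_s$ as the union of bounded complete trajectories (Lemma \ref{CaractAttrStrong}). Concretely, I would fix an arbitrary $v_{0}\in\mathfrak{R}$ and exhibit a bounded complete trajectory of $K_{s}^{+}$ whose value at $0$ is $v_{0}$.

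First I would invoke Lemma \ref{EqFixedStrong}, which tells us that $\mathfrak{R}=\mathfrak{R}_{K_{s}^{+}}$, so the constant function $u(t)\equiv v_{0}$ belongs to $K_{s}^{+}$. Then I would define $\gamma:\mathbb{R}\to H_{0}^{1}(\Omega)$ by $\gamma(t)\equiv v_{0}$. By the translation property, $\gamma(\cdot+h)\big|_{[0,+\infty)}\equiv v_{0}\in K_{s}^{+}$ for every $h\in\mathbb{R}$, so $\gamma$ is a complete trajectory of $K_{s}^{+}$. Boundedness in $H_{0}^{1}(\Omega)$ of this trajectory is immediate (it takes a single value, and in any case $\mathfrak{R}$ is bounded in $H_{0}^{1}(\Omega)$ by Lemma \ref{lem:1}), hence $\gamma\in\mathbb{K}_{s}$.

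Finally, I would apply Lemma \ref{CaractAttrStrong} to conclude that $v_{0}=\gamma(0)\in\Theta_{s}$. Since $v_{0}\in\mathfrak{R}$ was arbitrary, $\mathfrak{R}\subset\Theta_{s}$ follows. There is no real obstacle here: every ingredient (the identification of fixed points, boundedness of $\mathfrak{R}$ in $H_{0}^{1}(\Omega)$, and the trajectory characterization of $\Theta_{s}$) has already been established, so the proof is essentially a one-line assembly of these facts.
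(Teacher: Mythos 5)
Your proof is correct and follows exactly the route the paper intends: the corollary is stated immediately after Lemma \ref{EqFixedStrong} precisely because the constant trajectory $\gamma(t)\equiv v_{0}$ lies in $\mathbb{K}_{s}$ and Lemma \ref{CaractAttrStrong} then gives $v_{0}=\gamma(0)\in\Theta_{s}$. Nothing is missing.
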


\begin{lemma}
\label{FixedPointsCarS}Let (\ref{2}) hold. Then $z\in\mathfrak{R}$ if and only
if $z\in G_{s}\left(  t,z\right)  $ for all $t\geq0.$
\end{lemma}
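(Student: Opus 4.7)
The plan is to mirror the proof of Lemma \ref{FixedPointsCarR} essentially verbatim, substituting $K_s^+$ and $G_s$ for $K_r^+$ and $G_r$. The forward implication will use the already established Lemma \ref{EqFixedStrong}: if $z \in \mathfrak{R}$, then the constant function $v(t) \equiv z$ lies in $K_s^+$, so by definition of $G_s$ we have $z = v(t) \in G_s(t,z)$ for every $t \geq 0$.

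For the converse, I would exploit the inclusion $K_s^+ \subset K^+$, which is immediate since any strong solution satisfies (\ref{Eq1}) together with the additional regularity (\ref{LInfStrong})--(\ref{DerivStrong}) that only strengthens the weak solution concept. Consequently $G_s(t,z) \subset G(t,z)$ for all $t \geq 0$, so from $z \in G_s(t,z)$ for all $t \geq 0$ we obtain $z \in G(t,z)$ for all $t \geq 0$. Then point 1 of Theorem \ref{PropK+} yields $z \in \mathfrak{R}$.

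The proof is essentially a one-line adaptation and I do not expect any obstacle: the only thing to verify is the inclusion $K_s^+ \subset K^+$, which is routine, and the only result being invoked is the characterization already proved at the weak-solution level. Note that we do \emph{not} need to build a dyadic concatenation argument of the kind used in the abstract Lemma \ref{FixedPointsCar}, since $(K3)$ is available for $K_s^+$; nevertheless, bypassing the abstract machinery by piggy-backing on Theorem \ref{PropK+} is shorter and avoids having to check that concatenation preserves the strong-solution property in the global-in-time estimates.
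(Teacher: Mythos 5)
Your proof is correct, but it takes a slightly different route from the paper's. For the converse direction the paper simply notes that $K_{s}^{+}$ satisfies $\left(K1\right)-\left(K4\right)$ and applies the abstract characterization of fixed points (Lemma \ref{FixedPointsCar}) directly to $K_{s}^{+}$, together with Lemma \ref{EqFixedStrong}; this runs the dyadic concatenation argument inside the class of strong solutions, which is legitimate because $\left(K3\right)$ and $\left(K4\right)$ are available there. You instead reduce to the weak semiflow via the inclusion $K_{s}^{+}\subset K^{+}$ (hence $G_{s}\left(t,z\right)\subset G\left(t,z\right)$) and invoke point 1 of Theorem \ref{PropK+} — which is exactly the argument the paper uses for the regular semiflow $G_{r}$ in Lemma \ref{FixedPointsCarR}, where $\left(K3\right)$ is not known and the direct abstract route is unavailable. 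Both arguments are one-liners and both ultimately rest on the same abstract lemma, just applied to different trajectory classes; your version has the minor advantage of not needing the concatenation/compactness properties of $K_{s}^{+}$ at all for this statement, while the paper's version keeps the strong-solution theory self-contained. The inclusion $K_{s}^{+}\subset K^{+}$ you rely on is indeed immediate from the definitions, so there is no gap.
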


\begin{proof}
As $K_{s}^{+}$ satisfies $\left(  K1\right)  -\left(  K4\right)  $, it follows
from Lemma \ref{EqFixedStrong} and Lemma \ref{FixedPointsCar}.
\end{proof}

\begin{remark}
We can prove that $z\in G_{s}\left(  t,z\right)  ,$ for all $t\geq0,$ implies
$z\in\mathfrak{R}$ also by using the Lyapunov function $E\left(
\text{\textperiodcentered}\right)  $. Indeed, if $z\in G_{s}\left(
t,z\right)  ,$ for any $t\geq0,$ then for every $T>0$ there exists
$v^{T}\left(  \text{\textperiodcentered}\right)  \in K_{s}^{+}$ such that
$v^{T}\left(  T\right)  =z.$ Thus by the energy equality (\ref{Energy2}) we
have
\[
E(z)+2\int_{0}^{T}\left\Vert v_{r}^{T}\right\Vert ^{2}dr=E\left(  v^{T}\left(
T\right)  \right)  +2\int_{0}^{T}\left\Vert v_{r}^{T}\right\Vert
^{2}dr=E\left(  v\left(  0\right)  \right)  =E(z)\text{, }%
\]
so that $\int_{0}^{T}\left\Vert v_{r}^{T}\right\Vert ^{2}dr=0$. Therefore,
$v_{t}^{T}=0$ for a.a. $t$ in $\left(  0,T\right)  $ and $v^{T}\left(
t\right)  =z$ for all $t\in\lbrack0,T]$. Hence, $z\in\mathfrak{R}_{K_{s}^{+}%
}=\mathfrak{R.}$
\end{remark}

\begin{lemma}
\label{CompleteTrajEquivS}Let (\ref{2}) hold. Then the map $\gamma
:\mathbb{R}\rightarrow H_{0}^{1}\left(  \Omega\right)  $ is a complete
trajectory of $K_{s}^{+}$ if and only if
\begin{equation}
\gamma\left(  t+s\right)  \in G_{s}\left(  t,\gamma\left(  s\right)  \right)
\text{ for all }s\in\mathbb{R}\text{ and }t\geq0. \label{PropCompleteTrajS}%
\end{equation}

\end{lemma}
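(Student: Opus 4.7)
The plan is to invoke the abstract Lemma \ref{CompleteTrajEquiv} with phase space $X = H_0^1(\Omega)$ and family $\mathcal{R} = K_s^+$. The forward implication is immediate and uses none of the hypotheses $(K1)$--$(K4)$: if $\gamma$ is a complete trajectory of $K_s^+$, then by the very definition the shifted map $\gamma(\cdot + s)|_{[0,+\infty)}$ belongs to $K_s^+$ for every $s \in \mathbb{R}$, so evaluating at $t \geq 0$ gives $\gamma(t+s) \in G_s(t,\gamma(s))$.

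For the converse, I would simply verify that $K_s^+$ satisfies all four abstract conditions in $H_0^1(\Omega)$ and then quote Lemma \ref{CompleteTrajEquiv}. Conditions $(K1)$ and $(K2)$ are immediate: existence of a strong solution starting from any $u_0 \in H_0^1(\Omega)$ is given by Lemma \ref{lem:4} (the strong regularity (\ref{LInfStrong})--(\ref{DerivStrong}) on all of $[0,T]$ is exactly what that construction provides when $u_0 \in H_0^1(\Omega)$), and time translation preserves both the defining weak formulation (\ref{Eq1}) and the regularity (\ref{LInfStrong})--(\ref{DerivStrong}). Condition $(K4)$ is precisely Corollary \ref{K4Strong}. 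Condition $(K3)$ requires a short check: if $u_1, u_2 \in K_s^+$ satisfy $u_2(0) = u_1(s)$, their concatenation $u$ solves (\ref{Eq1}) in the weak sense (a standard splitting of the test-function support, using continuity of both pieces into $L^2(\Omega)$ at the junction) and inherits (\ref{LInfStrong})--(\ref{DerivStrong}) on any bounded interval $(0,T)$ from the analogous bounds for $u_1$ on $(0,s)$ and $u_2$ on $(0,T-s)$.

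The main (and essentially only) subtlety is the verification of $(K3)$: unlike the regular-solution class $K_r^+$, where strong regularity is only guaranteed after an arbitrarily small positive time and concatenation may therefore fail to produce a regular solution at the junction, the strong-solution class $K_s^+$ demands regularity uniformly down to $t=0$, which is precisely the property that makes concatenation go through. Once $(K1)$--$(K4)$ are in place, the equivalence stated in the lemma is a direct consequence of Lemma \ref{CompleteTrajEquiv} applied in the Hilbert space $H_0^1(\Omega)$.
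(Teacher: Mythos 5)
Your proposal is correct and follows essentially the same route as the paper: the paper's proof is a one-line appeal to Lemma \ref{CompleteTrajEquiv} after noting that $K_{s}^{+}$ satisfies $\left(  K1\right)  -\left(  K3\right)  $ (stated just before the definition of $G_{s}$) and $\left(  K4\right)  $ (Corollary \ref{K4Strong}). Your explicit verification of $\left(  K3\right)  $ --- concatenation works for strong solutions precisely because the regularity (\ref{LInfStrong})--(\ref{DerivStrong}) holds down to $t=0$, unlike for $K_{r}^{+}$ --- is exactly the point the paper leaves as ``we can check easily.''
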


\begin{proof}
As $\left(  K1\right)  -\left(  K4\right)  $ hold, the result follows from
Lemma \ref{CompleteTrajEquiv}.
\end{proof}

\bigskip

We shall prove now that
\begin{equation}
\Theta_{s}=M_{s}^{+}(\mathfrak{R})=M_{s}^{-}(\mathfrak{R}), \label{Str2}%
\end{equation}
where
\[%
\begin{array}
[c]{c}%
M_{s}^{-}(\mathfrak{R})=\left\{  z\,:\,\exists\gamma(\cdot)\in\mathbb{K}%
_{s},\,\ \gamma(0)=z,\,\,\,\ \mathrm{dist}_{H_{0}^{1}(\Omega)}(\gamma
(t),\mathfrak{R})\rightarrow0,\,\ t\rightarrow+\infty\right\}  ,\\
M_{s}^{+}(\mathfrak{R})=\left\{  z\,:\,\exists\gamma(\cdot)\in\mathbb{F}%
_{s},\,\ \gamma(0)=z,\,\,\,\ \mathrm{dist}_{H_{0}^{1}(\Omega)}(\gamma
(t),\mathfrak{R})\rightarrow0,\,\ t\rightarrow-\infty\right\}  .
\end{array}
\]

\begin{theorem}
\label{StructureStrong} Under conditions (\ref{2}) equality (\ref{Str2}) holds.
\end{theorem}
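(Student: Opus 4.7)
The plan is to reduce the statement to Theorem \ref{StructureReg} by using the identifications already established between the strong and regular settings. From Lemma \ref{EqAttr} we have $\Theta_s=\Theta_r$, and from Lemma \ref{EqK} we have $\mathbb{K}_s=\mathbb{K}_r^1=\mathbb{K}_r$. Theorem \ref{StructureReg}, together with the $H_0^1$-characterization (\ref{StrH1}), already gives $\Theta_r=M_r^-(\mathfrak{R})=M_r^+(\mathfrak{R})$ where the convergences can be taken in the $H_0^1(\Omega)$-metric. So the task reduces to verifying $M_s^\pm(\mathfrak{R})=M_r^\pm(\mathfrak{R})$.

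For the stable set this is immediate: both $M_s^-(\mathfrak{R})$ and (the $H_0^1$-version of) $M_r^-(\mathfrak{R})$ are described as the set of $z$ admitting some $\gamma\in\mathbb{K}_s=\mathbb{K}_r$ with $\gamma(0)=z$ and $\mathrm{dist}_{H_0^1(\Omega)}(\gamma(t),\mathfrak{R})\to0$ as $t\to+\infty$. Hence $M_s^-(\mathfrak{R})=M_r^-(\mathfrak{R})=\Theta_r=\Theta_s$.

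For the unstable set the argument requires one extra step, namely that $\mathbb{F}_s$ in the definition of $M_s^+(\mathfrak{R})$ may be replaced by $\mathbb{K}_s$, and likewise $\mathbb{F}_r$ in the definition of $M_r^+(\mathfrak{R})$ by $\mathbb{K}_r$; once this is done, the two sets coincide via Lemma \ref{EqK}. For $\mathbb{F}_s$: if $\gamma\in\mathbb{F}_s$ with $\mathrm{dist}_{H_0^1(\Omega)}(\gamma(t),\mathfrak{R})\to0$ as $t\to-\infty$, then since $\mathfrak{R}$ is bounded in $H_0^1(\Omega)$ by Lemma \ref{lem:1}, there exist $M>0$ and $T_0>0$ with $\|\gamma(t)\|_{H_0^1(\Omega)}\le M$ for all $t\le-T_0$. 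Applying (\ref{PropStrong1}) to the strong solution $\gamma(\cdot-T_0)\in K_s^+$ (which has initial datum $\gamma(-T_0)$ of $H_0^1$-norm at most $M$) yields $\|\gamma(t)\|_{H_0^1(\Omega)}^2\le R_1(M^4+1)$ for all $t\ge-T_0$, so $\gamma$ is globally $H_0^1$-bounded, i.e.\ $\gamma\in\mathbb{K}_s$. The analogous reduction for $\mathbb{F}_r$ is exactly the one already noted in the paper just before Theorem \ref{StructureReg}, via (\ref{PropReg2}). Combined with the $H_0^1$-convergence in (\ref{StrH1}) and Lemma \ref{EqK}, this yields $M_s^+(\mathfrak{R})=M_r^+(\mathfrak{R})=\Theta_r=\Theta_s$.

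The main obstacle in this plan is the propagation-of-boundedness step for $\mathbb{F}_s$: one must promote a $\gamma\in\mathbb{F}_s$ that is asymptotically close to $\mathfrak{R}$ as $t\to-\infty$ to a globally $H_0^1$-bounded trajectory, in order to apply Lemma \ref{EqK} and close the identification with $M_r^+(\mathfrak{R})$. Everything else is bookkeeping on top of Theorem \ref{StructureReg}, Lemma \ref{EqAttr} and Lemma \ref{EqK}.
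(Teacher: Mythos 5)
Your proposal is correct and follows essentially the same route as the paper: the paper's proof likewise uses (\ref{PropStrong1}) to replace $\mathbb{F}_{s}$ by $\mathbb{K}_{s}$ in the definition of $M_{s}^{+}(\mathfrak{R})$ and then invokes Lemmas \ref{EqAttr}, \ref{EqK} and Theorem \ref{StructureReg}. Your version merely spells out the propagation-of-boundedness step (shifting to $\gamma(\cdot-T_{0})$ and using the $H_{0}^{1}$-boundedness of $\mathfrak{R}$ from Lemma \ref{lem:1}) in more detail than the paper does.
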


\begin{proof}
By (\ref{PropStrong1}) we have%
\[
M_{s}^{+}(\mathfrak{R})=\left\{  z\,:\,\exists\gamma(\cdot)\in\mathbb{K}%
_{s},\,\ \gamma(0)=z,\,\,\,\ \mathrm{dist}_{H_{0}^{1}(\Omega)}(\gamma
(t),\mathfrak{R})\rightarrow0,\,\ t\rightarrow-\infty\right\}  ,
\]
and then equality (\ref{Str2}) follows from Lemmas \ref{EqAttr}, \ref{EqK} and
Theorem \ref{StructureReg}.
\end{proof}

\bigskip

\textbf{Acknowledgments.}

Partially supported by spanish Ministerio de Ciencia e Innovaci\'{o}n and
FEDER, projects MTM2011-22411 and MTM2009-11820, the Consejer\'{\i}a de
Innovaci\'{o}n, Ciencia y Empresa (Junta de Andaluc\'{\i}a) under the Proyecto
de Excelencia P07-FQM-02468 and the Consejer\'{\i}a de Cultura y Educaci\'{o}n
(Comunidad Aut\'{o}noma de Murcia), grant 08667/PI/08.

\end{document}